\newtheorem{theorem}{Theorem}[section]
\newtheorem{lemma}[theorem]{Lemma}
\newtheorem{prop}[theorem]{Proposition}
\newtheorem{corollary}[theorem]{Corollary}
\newtheorem{definition}[theorem]{Definition}
\newenvironment{theorem*}[1][Theorem]{\begin{trivlist}\itshape
\item[\hskip \labelsep {\bfseries #1}]}{\normalfont\end{trivlist}}
\newenvironment{lemma*}[1][Lemma]{\begin{trivlist}\itshape
\item[\hskip \labelsep {\bfseries #1}]}{\normalfont\end{trivlist}}
\newenvironment{prop*}[1][Proposition]{\begin{trivlist}\itshape
\item[\hskip \labelsep {\bfseries #1}]}{\normalfont\end{trivlist}}
\newenvironment{corollary*}[1][Corollary]{\begin{trivlist}\itshape
\item[\hskip \labelsep {\bfseries #1}]}{\normalfont\end{trivlist}}
\newenvironment{definition*}[1][Definition]{\begin{trivlist}\itshape
\item[\hskip \labelsep {\bfseries #1}]}{\normalfont\end{trivlist}}
\newenvironment{example*}[1][Example]{\begin{trivlist}
\item[\hskip \labelsep {\bfseries #1}]}{\nopagebreak\flushright$\filledmedsquare$\end{trivlist}}
\newenvironment{remark*}[1][Remark]{\begin{trivlist}
\item[\hskip \labelsep {\bfseries #1}]}{\nopagebreak\flushright$\filledmedsquare$\end{trivlist}}
\let\stdparagraph\paragraph
\renewcommand\paragraph{\vspace*{1em}\stdparagraph}
\renewcommand{\cite}[1]{{[\cites{#1}}}
\renewcommand{\phi}{\varphi}
\renewcommand{\rho}{\varrho}
\renewcommand{\theta}{\vartheta}
\newcommand{\eps}{\ensuremath{\varepsilon}}
\renewcommand{\d}{\partial}
\newcommand{\ex}{\exists}
\newcommand{\fa}{\forall}
\newcommand{\lnorm}{\left\lVert}
\newcommand{\rnorm}{\right\lVert}
\newcommand{\lbetr}{\left\lvert}
\newcommand{\rbetr}{\right\lvert}
\newcommand{\norm}[1]{\lnorm {#1}\rnorm}
\newcommand{\abs}[1]{\lbetr {#1}\rbetr}
\renewcommand{\l}{\ensuremath{\left}}
\renewcommand{\r}{\ensuremath{\right}}
\newcommand{\ubr}{\underbrace}
\newcommand{\oli}{\overline}
\newcommand{\opn}{\operatorname}
\newcommand{\then}{\ensuremath{\Rightarrow}}
\newcommand{\sse}{\ensuremath{\subseteq}}
\newcommand{\ssne}{\ensuremath{\subsetneq}}
\newcommand{\vol}{\ensuremath{\mathrm{vol}}}
\newcommand{\Hf}{\ensuremath{\mathfrak{H}}}
\newcommand{\Gf}{\ensuremath{\mathfrak{G}}}
\newcommand{\If}{\ensuremath{\mathfrak{I}}}
\newcommand{\tr}{\ensuremath{\opn{tr}}}
\newcommand{\id}{\opn{id}}
\newcommand{\nn}[1][{}]{\ensuremath{\mathbb{N}^{#1}}}
\newcommand{\rn}[1][{}]{\ensuremath{\mathbb{R}^{#1}}}
\newcommand{\cn}[1][{}]{\ensuremath{\mathbb{C}^{#1}}}
\newcommand{\zn}[1][{}]{\ensuremath{\mathbb{Z}^{#1}}}
\newcommand{\Tn}[1][{}]{\ensuremath{\mathbb{T}^{#1}}}
\newcommand{\Hn}[1][{}]{\ensuremath{\mathbb{H}^{#1}}}
\newcommand{\Tf}[1][{}]{\ensuremath{\mathfrak{T}^{#1}}}
\newcommand{\Tp}[1][{}]{\ensuremath{\mathcal{T}^{#1}}}
\newcommand{\Sp}[1][{}]{\ensuremath{\mathcal{S}^{#1}}}
\newcommand{\Ap}[1][{}]{\ensuremath{\mathcal{A}^{#1}}}
\newcommand{\fock}[1][{}]{\ensuremath{\mathcal{F}_{#1}}}
\newcommand{\expvec}[1][{}]{\ensuremath{\mathcal{E}_{#1}}}
\newcommand{\erf}{\ensuremath{\opn{erf}}}
\newcommand{\res}{\ensuremath{\opn{res}}}
\newcommand{\pr}{\ensuremath{\opn{pr}}}
\newcommand{\ind}{\ensuremath{\opn{ind}}}
\newcommand{\loLc}{\ensuremath{\mathrm{l.o.L.c.}}}
\newcommand{\trKV}{\ensuremath{\tr_{\mathrm{KV}}}}
\newcommand{\Htr}{\ensuremath{\mathrm{Htr}}}
\begin{document}
\title[$\zeta$-reg. and the heat-trace on some compact quantum semigroups]{Zeta-regularization and the heat-trace on some compact quantum semigroups}

\author{Jason Hancox}
\address{Department of Mathematics and Statistics, Lancaster University, LA1 4YF, Lancaster, United Kingdom}
\email{j.hancox@lancaster.ac.uk}

\author{Tobias Hartung}
\address{Department of Mathematics, King's College London, Strand, WC2R 2LS, London, United Kingdom}
\email{tobias.hartung@kcl.ac.uk}

\date{\today}

\begin{abstract}
  Heat-invariants are a class of spectral invariants of Laplace-type operators on compact Riemannian manifolds that contain information about the geometry of the manifold, e.g., the metric and connection. Since Brownian motion solves the heat equation, these invariants can be obtained studying Brownian motion on manifolds. In this article, we consider Brownian motion on the Toeplitz algebra, discrete Heisenberg group algebras, and non-commutative tori to define Laplace-type operators and heat-semigroups on these C*-bialgebras. We show that their traces can be $\zeta$-regularized and compute ``heat-traces'' on these algebras, giving us a notion of dimension and volume. Furthermore, we consider $SU_q(2)$ which does not have a Brownian motion but a class of driftless Gaussians which still recover the dimension of $SU_q(2)$.
\end{abstract}

\maketitle

\tableofcontents

\section{Introduction}\label{sec:intro}
In this article, we want to consider $\zeta$-regularization and the heat-trace in the non-commutative settings of the Toeplitz algebra, discrete Heisenberg group, non-commutative tori, and $SU_q(2)$. $\zeta$-regularization is a means to extend tracial functionals (not necessarily bounded) that are defined on a subalgebra to a larger domain. More precisely, consider an algebra $A$, a subalgebra $A_0$, and a linear functional $\tau:\ A_0\to\cn$ such that $\fa x,y\in A_0:\ \tau(xy)=\tau(yx)$. Given a holomorphic family $\phi:\ \cn\to A$ and $\Omega\sse\cn$ open and connected such that the restriction $\phi|_\Omega$ of $\phi$ to $\Omega$ takes values in $A_0$, we want to consider maximal holomorphic extensions $\zeta(\phi)$ of $\tau\circ\phi|_{\Omega}$. In a way, this is a generalized version of the Riemann $\zeta$-function $\zeta_R$ and its applications like ``$\sum_{n\in\nn}n=\zeta_R(-1)$''. These ideas were pioneered by Ray and Singer~\cite{ray,ray-singer} whose initial works had already been successfully applied by Hawking~\cite{hawking} to compute the energy momentum tensor on the black hole horizon. Since traces are important for studying invariants, $\zeta$-regularization has become an integral part of the pseudo-differential toolkit, especially in geometric analysis. 

As such, the ``classical case'' to consider is where $A=\Psi$ is the algebra of classical [classical is important because we are not looking at the entire algebra of $\psi$dos] pseudo-differential operators on a compact Riemannian $C^\infty$-manifold $M$ without boundary, $A_0$ the dense subalgebra of classical pseudo-differential operators that are trace class on $L_2(M)$, and $\tau$ the canonical trace $\tr$ on the Schatten class $S^1(L_2(M))$. It is then possible to construct this holomorphic family $\phi$ of pseudo-differential operators in such a way that each $\phi(z)$ has affine order $qz+a$ where $q>0$. Then, $\phi(z)$ is of trace class whenever $\Re(z)<\frac{-\dim M-\Re(a)}{q}$ and $\tau\circ\phi$ has a meromorphic extension to $\cn$. Furthermore, all poles are simple and contained in the set $\l\{\frac{j-a-\dim M}{q};\ j\in\nn_0\r\}$. This construction, using the notion of gauged symbols, was introduced by Guillemin~\cite{guillemin}, the residues at the poles give rise to Wodzicki's non-commutative residue~\cite{wodzicki} which (up to a constant factor) is the unique continuous trace on $\Psi$ (if $\dim M>1$), and the constant Laurent coefficients give rise to the Kontsevich-Vishik trace \cite{kontsevich-vishik,kontsevich-vishik-geometry}. It was later shown~\cite{maniccia-schrohe-seiler} that the Kontsevich-Vishik trace (which is unbounded in general) is the unique extension of the canonical trace on $S^1(L_2(M))$ to the subspace of pseudo-differential operators of non-integer order (a dense subspace of $\Psi$ which is not an algebra). The Kontsevich-Vishik trace has also been extended to Fourier integral operators (or, more precisely, ``gauged poly-$\log$-homogeneous distributions'' which contain the gauged Lagrangian distributions studied by Guillemin~\cite{guillemin} which in turn contain Fourier integral operator traces) in Hartung's Ph.D. thesis~\cite{hartung-phd,hartung-scott}.

Families $\phi$ of the form $\phi(z)=TQ^z$, which are constructed using a classical pseudo-differential operator $T$ and complex powers $Q^z$ of an appropriate invertible elliptic operator $Q$~\cite{seeley}, are particularly important example of such $\zeta$-functions. Here, the meromorphic extension of $\tr TQ^z$ is denoted by $\zeta(T,Q)$ and called the $\zeta$-regularized trace of $T$ with weight $Q$. It was shown~\cite{paycha-scott} that the constant term of the Laurent expansion of $\zeta(T,Q)$ centered at zero is of the form $\trKV T-\frac{1}{q}\res(T\ln Q)-\tr(T\pr_{\ker Q})$ where $\trKV$ denotes the Kontsevich-Vishik trace, $\res(T\ln Q)$ is the so called ``trace anomaly'', and $\res$ denotes the extended Wodzicki residue (note that $T\ln Q$ is typically not a pseudo-differential operator and the formula holds only locally as neither $\trKV$ nor $\res$ are globally defined in general). For $T=1$, $\res(\ln Q)$ this is called the logarithmic residue~\cite{okikiolu,scott}. This trace anomaly only appears in the so called ``critical case'' which is if there exists a degree of homogeneity $-\dim M$ in the asymptotic expansion of $T$, i.e., $\zeta(T,Q)$ has a pole in zero.\footnote{Here, we are ignoring the fact that the residue might be zero in which case $\zeta(T,Q)$ is holomorphic in a neighborhood of zero. However, even if this is the case $\zeta(T,Q)(0)$ behaves exactly like you expect the constant Laurent coefficient to behave in the presence of a pole, so for all intents an purposes $\zeta(T,Q)$ has a pole.}

The Kontsevich-Vishik trace of $T$ can be stated in the following form. Let $T$ have symbol $\sigma$ with asymptotic expansion $\sigma(x,\xi)\sim\sum_{j\in\nn_0}\alpha_{m-j}(x,\xi)$ where each $\alpha_{m-j}$ is homogeneous of degree $m-j$ in $\xi$. In other words, 
\begin{align*}
  k(x,y):=&(2\pi)^{-\dim M}\int_{\rn[\dim M]}e^{i\langle x-y,\xi\rangle_{\ell_2(\dim M)}}\sigma(x,\xi)d\xi\\
  \sim&\sum_{j\in\nn_0}\ubr{(2\pi)^{-\dim M}\int_{\rn[\dim M]}e^{i\langle x-y,\xi\rangle_{\ell_2(\dim M)}}\alpha_{m-j}(x,\xi)d\xi}_{=:k_{m-j}(x,y)}
\end{align*}
coincides locally with the kernel of $T$ modulo smoothing operators. Then, there exists $N\in\nn$ (any $N>\dim M+\Re(m)$ will do) such that the operator $T^{\mathrm{reg}}$ with kernel $k^{\mathrm{reg}}:=k-\sum_{j=0}^Nk_{m-j}$ is of trace class and 
\begin{align*}
  \trKV T=\tr T^{\mathrm{reg}}=\int_Mk^{\mathrm{reg}}(x,x)d\vol_M(x).
\end{align*}
This formula has a very important consequence, namely that the Kontsevich-Vishik trace of differential operators ($m\in\nn_0$ and $\fa j\in\nn_{>m}:\ \alpha_{m-j}=0$) vanishes. 

If $Q=\Delta+\pr_{\ker\Delta}$ where $\Delta$ is an elliptic differential operator and $\pr_{\ker\Delta}$ the projection onto its kernel, then $\Gamma(-z)TQ^{-z}$ is the Mellin transform of $Te^{-tQ}$ and, provided $\Delta$ is non-negative, $z\mapsto\Gamma(-z)\zeta(T,Q)(-z)$ is the inverse Mellin transform of $t\mapsto\tr Te^{-tQ}$. For $T=1$ the function $t\mapsto \tr e^{-tQ}$ is called the (generalized) heat-trace generated by $-Q$. An important application of these heat-traces is given in the heat-trace proof~\cite{atiyah-bott-patodi} of the Atiyah-Singer index theorem; namely, if $D$ is a differential operator, then its Fredholm index is given by $\ind D=\tr (e^{-t D^*D}- e^{-t DD^*})$.

Using the inverse Mellin mapping theorem~\cite{azzali-levy-neira-paycha}, it follows that $\tr Te^{-tQ}$ has an asymptotic expansion $\frac{1}{q}\sum_{j\in\nn_0}a_jt^{-d_j}+O(t^{-\gamma})$ where $d_j=\frac{j-a-n}{q}$, $a_j=-\frac{1}{q}\res(TQ^{-d_j})$ for $d_j>0$, and some appropriate $\gamma>\frac{a+\dim M}{q}$. If $T$ is a differential operator plus a trace class operator $T_0$, then we also know that $a_j=\tr T_0-\frac{1}{q}\res(T\ln Q)$ if $d_j=0$.

For instance, let $Q$ be the positive Laplace-Beltrami operator on a compact Riemannian $C^\infty$-manifold $M$ of even dimension and without boundary. Then,
\begin{align*}
  \tr e^{-tQ}=\frac{\vol(M)}{(4\pi t)^{\frac{\dim M}{2}}}+\frac{\mathrm{total\ curvature}(M)}{3(4\pi)^{\frac{\dim M}{2}}t^{\frac{\dim M}{2}-1}}+\mathrm{higher\ order\ terms}
\end{align*}
and, more generally, for $\dim M\in\nn$, the heat-trace has an expansion
\begin{align*}
  \tr e^{-tQ}=(4\pi t)^{-\frac{\dim M}{2}}\sum_{k\in\nn_0}A_kt^{\frac{k}{2}}
\end{align*}
for $t\searrow0$. The $A_k$ are called heat-invariants and are spectral invariants of Laplace type operators $\nabla^*\nabla+V$ generating the corresponding ``heat-semigroup'' where $\nabla$ is a connection on a vector bundle over $M$ and $V$ is a multiplication operator called the potential. More precisely, the heat-invariants are functorial algebraic expressions in the jets of homogeneous components of $Q$, i.e., if $Q$ is geometric, then the heat-invariants carry information about the underlying metric and connection on $M$. We can see this quite nicely in the Laplace-Beltrami case, in which the volume and total curvature appear as lowest order heat-invariants and the dimension of the manifold in the pole order.

These properties of $\zeta$-functions and heat-traces are fundamental in geometric analysis which begs the questions whether or not they extend to non-commutative settings. Such questions have also been studied on the non-commutative torus, the Moyal plane, the Groenewold-Moyal star product, the non-commutative $\phi^4$ theory on the $4$-torus,  and $SU_q(2)$~\cite{azzali-levy-neira-paycha,carey-gayral-rennie-sukochev,carey-rennie-sadaev-sukochev,connes-fathizadeh,connes-moscovici,connes-tretkoff,dabrowski-sitarz-curved,dabrowski-sitarz-asymmetric,fathi,fathi-ghorbanpour-khalkhali,fathi-khalkhali,fathizadeh,fathizadeh-khalkhali-scalar-4,fathizadeh-khalkhali-scalar-2,fathizadeh-khalkhali-gauss-bonnet,gayral-iochum-vassilevich,iochum-masson,levy-neira-paycha,liu,matassa,sadeghi,sitarz,vassilevich-I,vassilevich-II}. 

The non-commutative torus $\Tn[n]_\theta$ is a deformation of the torus $\rn[n]/{\zn[n]}$ using a real anti-symmetric $n\times n$ matrix $\theta$ as a twist. The corresponding C*-algebra $A_\theta$ which has a dense subalgebra consisting of elements $a=\sum_{k\in\zn[n]}a_kU_k$ where $(a_k)_{k\in\zn[n]}$ is in the Schwartz space $\Sp(\zn[n])$, $U_0=1$, each $U_k$ is unitary, and $U_kU_l=e^{-\pi i\langle k,\theta l\rangle_{\ell_2(n)}}U_{k+l}=e^{-2\pi i\langle k,\theta l\rangle_{\ell_2(n)}}U_lU_k$. The construction of $A_\theta$ and its algebra of pseudo-differential operators $\Psi(\Tn[n]_\theta)$ is chosen in such a way that $\theta\to0$ recovers $A_0=C^\infty\l(\rn[n]/{\zn[n]}\r)$ and $\Psi(\Tn[n]_0)$ is the algebra of classical pseudo-differential operators on $\rn[n]/{\zn[n]}$. Then, it is possible to define $\Tn[n]_\theta$ versions of the Wodzicki residue and Kontsevich-Vishik trace and show many of the properties described above. In particular, in~\cite{levy-neira-paycha} it is shown that $\zeta$-functions are meromorphic on $\cn$ with isolated simple poles at $\frac{j-a-n}{q}$ for $j\in\nn_0$ and that the heat-trace pole order is $\frac{n}{2}$.

Similarly, it is possible to introduce a Dirac operator $D_q$ on $SU_q(2)$~\cite{kaad-senior} taking symmetries into account while constructing a twisted modular spectral triple and insuring that the classical limit $q\to 1$ recovers the Dirac operator on $SU(2)$. Using $D_q$, $\zeta$-functions and heat kernels can be constructed on $SU_q(2)$. Heat kernel expansions, heat-traces, $\zeta$-functions and their asymptotics, and their relation to the Dixmier trace (which for pseudo-differential operators coincides with the Wodzicki residue of the $\zeta$-function~\cite{connes-action-functional}) have been studied in this context~\cite{carey-gayral-rennie-sukochev,carey-rennie-sadaev-sukochev,matassa}.

In this article, we want to add another layer of abstraction and consider a number of quantum semigroups. While it is perfectly possible to define a ``twisted'' Laplace-Beltrami operator and, more generally pseudo-differential operators, on the non-commutative torus or $SU_q(2)$ by introducing a non-commutative twist on the classical algebra of pseudo-differential operators on the torus or $SU(2)$, such a construction is not straight forward, if at all possible, for many interesting quantum semigroups. Instead, we want to make use of the fact that Brownian motion solves the heat equation. In other words, the Laplace operator and the heat-semigroup can be recovered using Brownian motion. Hence, our approach in this article is to consider driftless Gaussian processes on quantum semigroups that allow us to define an appropriate notion of Brownian motion and use these Markov semigroups to define Laplace-type operators and ``heat-semigroups''. 

The study of L\'evy processes on *-bialgebras (cf. \cite{schurmann}) gives a very satisfying theory of independent increment processes in the non-commutative framework. This was initiated in the late eighties by Accardi, Sch\"urmann, and von Waldenfels \cite{accardi-schurmann-waldenfels}. The theory generalizes the notion of L\'evy processes on semigroups and allows for various types of familiar L\'evy processes. The most important of these types in this article, and arguably in general, is the notion of a Gaussian L\'evy process. The construction of these L\'evy processes is purely algebraic. Attempts at extending these methods to the C*-algebraic framework have made great progress. Lindsay and Skalski have completed this work relying on the assumption that the generator of the L\'evy process is bounded \cite{lindsay-skalski1,lindsay-skalski2,lindsay-skalski3}. More recently, Cipriani, Franz and Kula \cite{cipriani-franz-kula} have developed a characterization in terms of translation invariant quantum Markov semigroups on compact quantum groups that does not assume the generator to be bounded. At the time of writing an unpublished approach by Das and Lindsay will give a full characterisation for reduced compact quantum groups again which allows unbounded generators. In this article, we will introduce a C*-algebraic L\'evy process methodology that does not rely on the boundedness of the generator but will require the C*-algebra to be universal and ``nicely-generated'' in some sense. This will allow us to develop Gaussian processes on C*-bialgebras (whose generators in general are not bounded) and then by a canonical choice of Gaussian process which we will take to be Brownian motion we will have definitions for a heat-semigroup on our examples of C*-bialgebras.

The Toeplitz algebra is an interesting choice of algebra to consider in this context since it does not have a twist structure of the form allowing us to directly model pseudo-differential operators, yet defining Brownian motion is very natural. Hence, we will start by formally introducing the Toeplitz algebra $\Tp$ and give an overview of convolution semigroups (which contain the notion of L\'evy processes) in section~\ref{sec:semigroups}. Since Brownian motion is classically generated by the Laplace-Beltrami operator, we define a class of operators (polyhomogeneous operators) on the Toeplitz algebra which play a similar role to classical pseudo-differential operators in section~\ref{sec:zeta}, as well as their $\zeta$-functions. Then, we will study the heat-semigroup and $\zeta$-regularized heat-trace in section~\ref{sec:heat}.

While the dynamics of the Toeplitz algebra are generated by the circle $\d B_{\cn}\cong\rn/{2\pi\zn}$, it is not obtained from ``twisting'' the product on $C(\d B_{\cn})$. In particular, it is not merely some $\Tn_\theta$. In order to relate these results to more ``classical'' scenarios, we will consider the discrete Heisenberg group algebra in sections~\ref{sec:heisenberg} and~\ref{sec:heisenberg-Z-complex}, and non-commutative tori in section~\ref{sec:non-com-torus}. In particular, we can relate the heat-traces of discrete Heisenberg group algebras and non-commutative tori to the ``classical'' heat-traces on tori.

Finally, we will consider $SU_q(2)$ which, although being a ``twisted'' manifold, does not have a Brownian motion. Instead all driftless Gaussian semigroups are generated by constant multiples of a unique operator (which is not the Laplacian on $SU(2)$). Still, this family of driftless Gaussians can be regularized and is formally very similar to the Brownian motion on the Toeplitz algebra. 

Our main observations are the following. 

\begin{enumerate}
\item[(i)] On the Toeplitz algebra, $\zeta$-functions of polyhomogeneous operators have at most simple poles in the set $\l\{\frac{-2-d_\iota}{\delta};\ \iota\in I\r\}$ where the $d_\iota$ are the degrees of homogeneity and $\delta$ plays the same role $q$ did above. Furthermore, the ``heat-trace'' can be $\zeta$-regularized, has a first order pole in zero, and the sequence of heat coefficients $(A_k)_{k\in\nn_0}$ satisfies $A_0=-2\pi$ and $\fa k\in\nn:\ A_k=0$. This is exactly what we would expect to see if the Toeplitz algebra were a $2$-dimensional manifold of ``volume'' $-2\pi$ (all other heat coefficients vanishing).
\item[(ii)] In the case of the discrete Heisenberg group algebra $\Hn_N$ we consider two cases; namely, the twist being an abstract unitary or having a complex twist. 
  \begin{enumerate}
  \item[(a)] If the twist is an abstract unitary, then $\zeta$-functions of polyhomogeneous operators have isolated first order poles in the set $\l\{\frac{-2N-1-d_{\iota}}{\delta};\ \iota\in I\r\}$. This corresponds to the classical case of a $2N+1$-dimensional manifold. The heat-trace however is given by $-\tr\circ S$ where $S$ is the heat-semigroup on the $\rn[2N]/{2\pi\zn[2N]}$, i.e., the heat-trace appears as if $\Hn_N$ were a $2N$-torus with all heat-coefficients multiplied by $-1$.
  \item[(b)] If we consider a complex twist, then $\zeta$-functions of polyhomogeneous operators have isolated first order poles in the set $\l\{\frac{-2N-d_{\iota}}{\delta};\ \iota\in I\r\}$ and the heat-trace coincides with the heat-trace on $\rn[2N]/{2\pi\zn[2N]}$. In other words, $\Hn_N$ looks exactly like a $2N$-dimensional torus.
  \end{enumerate}
\item[(iii)] The non-commutative torus $A_\theta^N$ is closely related to the discrete Heisenberg group algebra case. As such we will consider two cases again; (a) ${\Tf}$ twists that are abstract unitaries and (b) ${\Tf}$ complex twists. It is also possible to add another $T'$ complex twists to the case (a) without changing the results.
  \begin{enumerate}
  \item[(a)] If the twists are abstract unitaries, then $\zeta$-functions of polyhomogeneous operators have isolated first order poles in $\l\{\frac{-N-{\Tf}-d_{\iota}}{\delta};\ \iota\in I\r\}$. This corresponds to the classical case of an $N+{\Tf}$-dimensional manifold. The heat-trace however is given by $(-1)^{\Tf}\tr\circ S$ where $S$ is the heat-semigroup on the $\rn[N]/{2\pi\zn[N]}$, i.e., the heat-trace appears as if $A_\theta^N$ were an $N$-torus with all heat-coefficients multiplied by $(-1)^{\Tf}$.
  \item[(b)] If we consider a complex twists, then $\zeta$-functions of polyhomogeneous operators have isolated first order poles in the set $\l\{\frac{-N-d_{\iota}}{\delta};\ \iota\in I\r\}$ and the heat-trace coincides with the heat-trace on $\rn[N]/{2\pi\zn[N]}$. In other words, $A_\theta^N$ looks exactly like an $N$-dimensional torus.
  \end{enumerate}
\item[(iv)] $SU_q(2)$ is a somewhat special case in the list of quantum semigroups we consider here since it does not have a Brownian motion. Hence, there is no heat-semigroup. However, there is still a class of driftless Gaussians that we can consider in lieu of alternatives. Their traces can be $\zeta$-regularized and have a pole in zero which is of order $\frac32$. This corresponds to a $3$-dimensional manifold ($SU_q(2)$ is a twisted $3$-dimensional Calabi-Yau algebra and $SU(2)$ is isomorphic to the $3$-sphere). The sequence of corresponding ``heat-coefficients'' $(A_k)_{k\in\nn_0}$ is given by $A_0=-2\pi^2r^{-\frac{3}{2}}$ and $\fa k\in\nn:\ A_k=0$ where $r\in\rn_{>0}$ is a parameter describing the family of driftless Gaussians on $SU_q(2)$. Hence, we still obtain consistent results regarding dimensionality of $SU_q(2)$ but interpreting the ``heat-coefficient'' $A_0$ as volume would be a bit of a stretch as it is also negative in the $SU(2)$ case. Furthermore, $\zeta$-functions of polyhomogeneous operators have isolated first order poles in the set $\l\{\frac{-3-d_{\iota}}{\delta};\ \iota\in I\r\}$.
\end{enumerate}

\paragraph{\textbf{Acknowledgements}} The authors would like to express his gratitude to Prof. Martin Lindsay and Prof. Simon Scott for inspiring comments and conversations which helped us to develop the work presented in this article. The first author was funded by the Faculty of Science and Technology at Lancaster University.

\section{Convolution semigroups and the Toeplitz algebra}\label{sec:semigroups}
In this section we will introduce the Toeplitz algebra as a C*-bialgebra. This has been introduced previously in~\cite{aukhadiev-grigoryan-lipacheva}. We will characterize the Sch\"urmann triples on this C*-bialgebra which generalizes the notion of L\'evy processes on a compact topological semigroup.

This will lead to a natural choice for Brownian motion and in later sections we will calculate important quantities associated to this semigroup that in the classical setting give information about the structure of the manifold involved.

\begin{definition}
The universal C*-algebra generated by the right shift operator $R:\ \ell_2(\nn_0)\to \ell_2(\nn_0)$ such that 
\begin{align*}
R(\lambda_0,\lambda_1,\dots)= (0,\lambda_0,\lambda_1,\dots)
\end{align*}
is called the Toeplitz algebra and denoted $\Tp$.
\end{definition}

The Toeplitz algebra has a dense *-subalgebra with basis given by $R_{n,m}=R^nR^{*m}$. We will denote this sub *-algebra $\Tp_0$. For a more detailed account of the Toeplitz algebra see \cite{murphy}.

We will proceed to define C*-bialgebras, these are the non-commutative analogue to topological semigroups with identity in the same sense that C*-algebras are a non-commutative analogue to locally compact Hausdorff topological spaces and compact quantum groups are non-commutative analogues to compact groups.

\begin{definition}
A  *-bialgebra is a unital  *-algebra $A$ with unital  *-homomorphisms $\Delta:\ A\to A\otimes A$ and $\eps:\ A\to \cn$ that satisfy
\begin{align*}
(\Delta\otimes \id)\circ\Delta=(\id\otimes \Delta)\circ \Delta \quad\text{ and }\quad (\eps\otimes \id)\circ\Delta=\id=(\id\otimes \eps)\circ \Delta
\end{align*}
where $\otimes$ is the algebraic tensor product.
\end{definition}
\begin{definition}
A C*-bialgebra is a unital C*-algebra $A$ with unital C*-homo--morphisms $\Delta:\ A\to A\otimes A$ and $\eps:\ A\to \cn$ that satisfy 
\begin{align*}
(\Delta\otimes \id)\circ\Delta=(\id\otimes \Delta)\circ \Delta \quad\text{ and }\quad (\eps\otimes \id)\circ\Delta=\id=(\id\otimes \eps)\circ \Delta
\end{align*}
where $\otimes$ is the spatial tensor product.
\end{definition}

If we also required that the sets $\Delta(A)(1\otimes A)$ and $\Delta(A)(1\otimes A)$ were dense in $A\otimes A$ in the definition of C*-bialgebra we would have the definition of a compact quantum group. These conditions are the quantum cancellation properties but will not be required for this.

The map $\Delta$ is called to co-multiplication and the first identity involving only $\Delta$ is called co-associativity. This is to mirror the multiplication of a semigroup. The map $\eps$ is called the co-unit and the second identity is called the co-unital property. This is analogous to the identity element of a semigroup. 

\begin{prop}
The Toeplitz algebra can be given the structure of a C*-bialgebra with co-multiplication $\Delta(R_{n,m})=R_{n,m}\otimes R_{n,m}$ and co-unit $\eps(R_{n,m})=1$ for all $n,m\in\nn_0$. Furthermore, the restriction of these maps to $\Tp_0$ makes $\Tp_0$ a *-bialgebra.
\end{prop}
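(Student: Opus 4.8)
The plan is to build $\Delta$ and $\eps$ directly from the universal property of $\Tp$. Recall that (by Coburn's theorem) $\Tp$ is the universal unital C*-algebra generated by a single isometry: for every unital C*-algebra $B$ and every $v\in B$ with $v^*v=1$ there is a unique unital $*$-homomorphism $\Tp\to B$ sending $R\mapsto v$. So first I would check the two isometry conditions that feed this property. In the spatial tensor product $\Tp\otimes\Tp$ one has $(R\otimes R)^*(R\otimes R)=R^*R\otimes R^*R=1\otimes 1$, so $R\otimes R$ is an isometry there; and $1\in\cn$ is trivially an isometry. Hence the universal property furnishes unique unital $*$-homomorphisms $\Delta:\ \Tp\to\Tp\otimes\Tp$ with $\Delta(R)=R\otimes R$ and $\eps:\ \Tp\to\cn$ with $\eps(R)=1$. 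Both are automatically contractive, and $\Delta$ already takes values in the C*-completion $\Tp\otimes\Tp$ because $R\otimes R$ does, so there is no analytic issue about ranges.

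Next I would read off the action on the basis. Since $\Delta$ and $\eps$ are unital $*$-homomorphisms and $R_{n,m}=R^nR^{*m}$, multiplicativity gives $\Delta(R_{n,m})=(R\otimes R)^n((R\otimes R)^*)^m=(R^n\otimes R^n)(R^{*m}\otimes R^{*m})=R_{n,m}\otimes R_{n,m}$ and, likewise, $\eps(R_{n,m})=\eps(R)^n\,\overline{\eps(R)}^{\,m}=1$, which are exactly the stated formulas. For co-associativity and the co-unit axiom I would test both sides on the generator $R$: in $\Tp\otimes\Tp\otimes\Tp$ one has $(\Delta\otimes\id)\Delta(R)=(\Delta\otimes\id)(R\otimes R)=R\otimes R\otimes R=(\id\otimes\Delta)(R\otimes R)=(\id\otimes\Delta)\Delta(R)$, while $(\eps\otimes\id)\Delta(R)=\eps(R)\otimes R=1\otimes R\cong R\cong R\otimes 1=R\otimes\eps(R)=(\id\otimes\eps)\Delta(R)$ under the canonical identifications $\cn\otimes\Tp\cong\Tp\cong\Tp\otimes\cn$. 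In each identity both sides are unital $*$-homomorphisms out of $\Tp$ that agree on $R$, so by the uniqueness clause of the universal property (equivalently, because $R$ generates $\Tp$ as a C*-algebra) they coincide. This shows $(\Tp,\Delta,\eps)$ is a C*-bialgebra.

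For the final sentence I would observe that $\{R_{n,m}:n,m\in\nn_0\}$ linearly spans $\Tp_0$, that $\Delta(R_{n,m})=R_{n,m}\otimes R_{n,m}$ lies in the algebraic tensor product $\Tp_0\otimes\Tp_0$, and that $\eps(R_{n,m})=1\in\cn$; hence $\Delta$ restricts to a unital $*$-homomorphism $\Tp_0\to\Tp_0\otimes\Tp_0$ (algebraic tensor product) and $\eps$ to a unital $*$-homomorphism $\Tp_0\to\cn$. The co-associativity and co-unit identities already hold on all of $\Tp$, hence a fortiori on $\Tp_0$ (one may equally verify them directly on the $R_{n,m}$), so $(\Tp_0,\Delta|_{\Tp_0},\eps|_{\Tp_0})$ is a $*$-bialgebra.

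I do not expect a genuine obstacle: the argument is essentially bookkeeping. The one step doing the work is the appeal to the universal property of $\Tp$, which simultaneously produces $\Delta$ on the C*-algebra level (not merely on the dense $*$-subalgebra) and supplies the uniqueness that collapses the coassociativity and counit verifications to a single line on $R$. The only place the concrete structure of $\Tp$ intervenes is the relation $R^*R=1$, which is precisely what makes $R\otimes R$ an isometry; the diagonal co-multiplication would fail to extend for a non-isometric generator.
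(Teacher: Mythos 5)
Your proposal is correct and follows essentially the same route as the paper: both invoke the universal property of $\Tp$ as the universal C*-algebra generated by an isometry, reduce the construction of $\Delta$ and $\eps$ to checking that $R\otimes R$ and $1$ are isometries, and read off the coalgebra identities from the values on the generator. Your write-up is somewhat more explicit about verifying co-associativity and the co-unit axiom on $R$ and about why the restriction to $\Tp_0$ lands in the algebraic tensor product, but the substance is the same.
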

\begin{proof}
As the Toeplitz algebra is a universal C*-algebra generated by the isometry $R$, we only need to show that $\Delta(R)^*\Delta(R)=I_{\Tp\otimes \Tp}$ and $\eps(R)^*\eps(R)=1$. This is straightforward:
\begin{align*}
\Delta(R)^*\Delta(R)&=(R^*\otimes R^*)(R\otimes R)=R^*R\otimes R^*R=I_{\Tp}\otimes I_{\Tp}=I_{\Tp\otimes\Tp}
\end{align*}
and
\begin{align*}
\eps(R)^*\eps(R)&=(1^*)(1)=1.
\end{align*}
The fact that the maps restricted to $\Tp_0$ gives it the structure of a *-bialgebra is easily seen by the identity $\Delta(R_{n,m})=R_{n,m}\otimes R_{n,m}$.
\end{proof}

Now that we have a *-bialgebra we can appeal to the theory of L\'evy processes on *-bialgebras~\cite{schurmann,lindsay-skalski}.

For a pre-Hilbert space $D$, let $L^*(D)$ denote the set of adjointable operators, that is, linear maps $T:\ D\to D$ such that there exists $T^*:\ D\to D$ such that $\fa x,y\in D:\ \langle x,Ty  \rangle=\langle T^*x,y  \rangle$. This is clearly a unital *-algebra.

\begin{definition}
Let $A$ be a *-bialgebra . A Sch\" urmann triple $(\rho,\eta,L)$ consists of a unital *-homomorphism $\rho:\ A\to L^*(D)$ for some pre-Hilbert space $D$, a $\rho-\eps$ cocycle $\eta:\ A\to D$, i.e.,
\begin{align*}
\eta(ab)=\eta(a)\eps(b)+\rho(a)\eta(b)
\end{align*}
and a *-linear functional $L:\ A\to\cn$ such that
\begin{align*}
L(ab)=L(a)\eps(b)+\eps(a)L(b)+\l\langle \eta(a^*),\eta(b)\r\rangle.
\end{align*}
A Sch\"urmann triple will be called surjective if the cocycle $\eta$ has dense image.
\end{definition}

If we let $\overline{D}$ be the Hilbert space completion of $D$ and we consider unital *-homomorphisms $\rho:\ \Tp_0\to L^*(D)$, we can see that $\rho(R)\in L^*(D)$ is an isometry and can therefore be extended to $B\l(\overline{D}\r)$. As $\Tp_0$ is generated by $R$ we can now use induction on word length to see that $\rho(R_{n,m})$ can be extended to $B\l(\overline{D}\r)$ for all $n,m\in \nn_0$.

Therefore we can replace the pre-Hilbert space in the Sch\"urmann triple definition by a Hilbert space and the adjointable operators by the bounded operators.

We will now proceed to characterize the Sch\"urmann triples on $\Tp_0$. 

\begin{theorem}\label{thm:isolp}
Given an isometry $V\in B(H)$ on some Hilbert space $H$, $h\in H$, and $\lambda\in \rn$, there exists a unique Sch\"urmann triple $(\rho,\eta,L)$ on $\Tp$ such that
\begin{align*}
\rho(R)=V,\quad \eta(R)=h,\quad\text{ and }\quad L(R-R^*)=i\lambda.
\end{align*}
Furthermore, every Sch\"urmann triple arises this way.
\end{theorem}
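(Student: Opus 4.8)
\emph{Strategy.} The plan is to exploit that $\Tp_0$ is the universal unital $*$-algebra generated by a single isometry, so that — forgetting the involution — it is the free unital $\cn$-algebra on two generators $r,s$ modulo the \emph{one} relation $sr=1$ (with $r\leftrightarrow R$, $s\leftrightarrow R^*$; note this is $R^*R=1$, not $RR^*=1$, which is the whole point of $\Tp$). Hence a unital $\cn$-algebra homomorphism out of $\Tp_0$ is exactly a pair of elements whose product, in that order, is the identity. I would produce $\rho$, $\eta$ and $L$ by three such prescriptions and then read them off, and finally recover uniqueness by showing everything is forced.

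\emph{Constructing the triple.} For $\rho$ I would simply invoke universality of the Toeplitz C*-algebra: since $V^*V=1$ there is a unique unital $*$-homomorphism $\Tp\to B(H)$ with $R\mapsto V$, which I restrict to $\Tp_0$. For $\eta$ I would put, on $H\oplus\cn$, $\pi(R):=\l(\begin{smallmatrix}V&h\\0&1\end{smallmatrix}\r)$ and $\pi(R^*):=\l(\begin{smallmatrix}V^*&-V^*h\\0&1\end{smallmatrix}\r)$; then $\pi(R^*)\pi(R)=I$ because $V^*V=1$, so $\pi$ extends to a unital algebra homomorphism $\Tp_0\to\opn{End}_\cn(H\oplus\cn)$, and writing $\pi(a)=\l(\begin{smallmatrix}\rho(a)&\eta(a)\\0&\eps(a)\end{smallmatrix}\r)$ — the bottom row is forced to be $(0,\eps)$ and the $(1,1)$-entry is the $\rho$ just built — multiplicativity in the $(1,2)$-corner is exactly the cocycle identity $\eta(ab)=\eta(a)\eps(b)+\rho(a)\eta(b)$, with $\eta(R)=h$. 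For $L$ I would first check that $B(a,b):=\l\langle\eta(a^*),\eta(b)\r\rangle$ is a Hochschild $2$-cocycle for the $\Tp_0$-bimodule $\cn$ (action via $\eps$) — a short computation from the cocycle property of $\eta$ and $*$-homomorphy of $\rho$ — so that $\Tp_0\oplus\cn$ with product $(a,z)(b,w):=(ab,\eps(a)w+z\eps(b)+B(a,b))$ and unit $(1,0)$ is a unital algebra. With $\mu:=-\tfrac12\norm{h}^2+\tfrac{i}{2}\lambda$ one gets $(R^*,\bar\mu)(R,\mu)=(1,\mu+\bar\mu+\norm{h}^2)=(1,0)$ precisely because $\Re\mu=-\tfrac12\norm{h}^2$, hence a unital algebra homomorphism $\sigma\colon\Tp_0\to\Tp_0\oplus\cn$ with $\sigma(R)=(R,\mu)$ and $\sigma(R^*)=(R^*,\bar\mu)$. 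Composing with the first projection shows $\sigma$ is a section, $\sigma(a)=(a,L(a))$; multiplicativity of $\sigma$ is precisely the Schürmann identity for $L$, and $L(R-R^*)=\mu-\bar\mu=i\lambda$. Finally $*$-compatibility is restored by two ``agree-on-generators'' arguments: $a\mapsto\rho(a^*)^*$ is a homomorphism agreeing with $\rho$ on $R,R^*$, so $\rho(a^*)=\rho(a)^*$; and $a\mapsto\overline{L(a^*)}$ satisfies the Schürmann identity and agrees with $L$ on $R,R^*$, so $L(a^*)=\overline{L(a)}$ by the uniqueness argument below.

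\emph{Uniqueness and ``every triple arises this way''.} In any Schürmann triple, setting $a=b=1$ in the two defining identities forces $\eta(1)=0$ and then $L(1)=0$; after that the cocycle and Schürmann identities determine $\eta$ and $L$ on each $R_{n,m}$ from their values on $R,R^*$ by induction on word length, and these are themselves pinned down — $\eta(R^*)=-\rho(R^*)\eta(R)$ from $\eta(R^*R)=0$, $\Re L(R)=-\tfrac12\norm{\eta(R)}^2$ from $L(R^*R)=0$, and $\Im L(R)=\tfrac12\lambda$ from $L(R-R^*)=i\lambda$ together with $*$-linearity. So the triple is uniquely determined by $(V,h,\lambda)$. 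Conversely, from an arbitrary Schürmann triple on $\Tp$ I would extract $V:=\rho(R)$ (a bounded isometry, since $\rho$ is a unital $*$-homomorphism and $R^*R=1$), $h:=\eta(R)\in H$, and $\lambda:=-iL(R-R^*)\in\rn$ (real by $*$-linearity of $L$); by uniqueness this triple coincides with the one just constructed.

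\emph{Where the work is.} I expect no single deep step, but $L$ to be the delicate one: it cannot be attached to $(\rho,\eta)$ arbitrarily, since a compatible $L$ exists iff the $2$-cocycle $B(a,b)=\l\langle\eta(a^*),\eta(b)\r\rangle$ is a coboundary. The Hochschild-extension device packages this cleanly and, just as for $\rho$ and $\eta$, reduces existence to the single one-line identity $(R^*,\bar\mu)(R,\mu)=(1,0)$. The only other thing to watch is that the auxiliary maps $\pi,\sigma$ are algebra (not $*$-) homomorphisms, so the $*$-structure on $\rho$ and the $*$-linearity of $L$ must be recovered separately, as indicated.
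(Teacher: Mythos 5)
Your proposal is correct, and its skeleton is the same as the paper's: build $\rho$ by universality, prescribe $\eta$ and $L$ on $R$ and $R^*$, extend by the cocycle and Schürmann product rules, and pin everything down by induction on word length for uniqueness. Where you differ is in how the extension step is justified. The paper simply asserts that $\eta$ and $L$ are ``well-defined'' after checking $\eta(R^*R)=0$ and $L(R^*R)=0$; you instead observe that $\Tp_0$, as a unital algebra, is $\cn\langle r,s\rangle/(sr-1)$ (which is legitimate, since $(x^*x-1)^*=x^*x-1$, so the $*$-ideal and the two-sided ideal generated by the relation coincide, and the monomials $R^nR^{*m}$ form a basis), and then encode the cocycle identity as multiplicativity of the upper-triangular representation $\pi$ on $H\oplus\cn$ and the Schürmann identity as multiplicativity of a section into the Hochschild extension $\Tp_0\oplus\cn$ by the $2$-cocycle $B(a,b)=\l\langle\eta(a^*),\eta(b)\r\rangle$. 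This reduces well-definedness to the two one-line identities $\pi(R^*)\pi(R)=I$ and $(R^*,\bar\mu)(R,\mu)=(1,0)$, which is exactly the content the paper's ``we will see that'' glosses over; it also makes transparent why the compatibility condition is $\Re\mu=-\tfrac12\norm h^2$. Your computation $\eta(R^*)=-\rho(R^*)\eta(R)=-V^*h$ is the correct one (the paper's $\eta(R^*)=-Vh$ appears to be a typo), and your recovery of $*$-linearity of $L$ and $*$-homomorphy of $\rho$ by an agree-on-generators argument, together with the uniqueness induction, closes the remaining gaps. In short: same route, but your version actually proves the step on which the theorem rests.
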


\begin{proof}
Clearly given any Sch\"urmann triple we can see that $\rho(R)$ is an isometry on some Hilbert space $H$. By definition $\eta(R)$ is an element of $H$ and by *-linearity $L(R-R^*)$ is a purely imaginary number.

Starting with $V\in B(H), h\in H$, and $\lambda \in \rn$, we easily construct $\rho:\Tp\to B(H)$ by universality where $\rho(R)=V$. If we let $\eta(R)=h$, $\eta(R^*)=-Vh$ and $\eta(ab)=\eta(a)\eps(b)+\rho(a)\eta(b)$ for all $a,b\in \Tp_0$, we will see that $\eta(R^*R)=0$ and $\eta:\ \Tp_0\to H$ is well-defined.

Finally, let $L(R-R^*)=i\lambda$, $L(R+R^*)=-\l\langle h,h\r\rangle$, and $L(ab)=L(a)\eps(b)+\eps(a)L(b)+\l\langle \eta(a^*),\eta(b)\r\rangle$ for all $a,b\in \Tp_0$. Again, we see that $L(R^*R)=0$ and $L:\Tp_0\to \cn$ is well-defined.
\end{proof}

\begin{definition}
A convolution semigroup of states is a family of linear functionals $\phi_t:\ A\to \cn$ such that $\phi_t(a^*a)\geq 0$ and $\phi_t(1)=1$ for all $t\geq 0$, i.e., $\phi_t$ is a *-algebra state for all $t\in\rn_{\ge 0}$ and 
\begin{align*}
  \phi_t*\phi_s:=(\phi_s\otimes \phi_t)\circ\Delta=\phi_{t+s},\quad \phi_0=\eps,\quad \text{ and }\quad \lim_{r\to 0}\phi_r(a)=\eps(a)
\end{align*}
for all $t,s\in \rn_{\ge0}$ and $a\in A$.
\end{definition}

\begin{definition}
A generating functional is a linear functional $L:A\to \cn$ such that
$$L(1)=0, \quad L(a^*)=\overline{L(a)},\quad \text{ and } L((a-\eps(a))^*(a-\eps(a)))\geq 0$$
for all $a\in A$.
\end{definition}

Sch\"urmann proved that following are in one-to-one correspondence
\begin{itemize}
\item Sch\"urmann triples on $A$;
\item Convolution semigroups of states on $A$;
\item Generating functionals on $A$.
\end{itemize}
In the classical setting given a L\'evy process $X_t$ on a compact semigroup the associated probabiltity distributions $\mu_t$ form a convolution semigroup of probability measures. This motivates the definition of L\'evy processes on *-bialgebras as states act as a noncommutative analogue to probability measures by results such as the Markov-Riesz-Kakutani theorem.

In the definition of the Sch\"urmann triple the functional $L$ is the generating functional. These will assist us in constructing contraction semigroups of operators. To extend these results to the C*-algebraic level we will introduce the symmetric Fock space.

\begin{definition}
Let $H$ be a Hilbert space. The symmetric Fock space is given by
\begin{align*}
  \cn\Omega\oplus\bigoplus_{n\geq 1} H^{\vee n}
\end{align*}
where $\Omega$ is called the vacuum vector and $H^{\vee n}\sse H^{\otimes n}$ such that elements are unchanged by the action of permutation of tensor factors. The symmetric Fock space of $H$ is denoted by $\Gamma(H)$.

If $H=L_2(\rn_{\ge0}; K)$ for some Hilbert space $K$, we will call $\Gamma(H)=\fock$ and for $I\subseteq \rn_{\ge0}$ call $\Gamma(L_2(I; K))=\fock[I]$.
\end{definition}

Note that the so called exponential property of Fock spaces with
\begin{align*}
  L_2([0,b_1);K)\oplus L_2([b_1,b_2);K)\oplus \dots \oplus L_2([b_n,\infty);K)\cong L_2(\rn_{\ge0};K)
\end{align*}
gives the decomposition
\begin{align*}
  \fock[[0,b_1)]\otimes\fock[[b_1,b_2)]\otimes\dots\otimes \fock[[b_n,\infty)]\cong \fock
\end{align*}
for all $n\in \nn$ and $0< b_1<b_2<\dots<b_n$.

A very important subspace of the Fock space is the the space of exponential vectors given by the linear span of the vectors 
\begin{align*}
  e(u)=\l(1,u,\frac{u^{\otimes 2}}{\sqrt{2}},\dots ,\frac{u^{\otimes n}}{\sqrt{n!}},\dots\r)\in \fock
\end{align*}
for all $u\in L_2(\rn_{\ge0};K)$. This is a dense subspace of $\fock$ and we will denote it by $\expvec$.

Using this characterization of Sch\"urmann processes we can now appeal to the Representation Theorem (Theorem~1.15~\cite{franz}) to realize our L\'evy process on the Fock space. This gives us a family of adapted unital weak*-homomorphisms $j_{s,t}:\ \Tp_0\to L^\dagger(\expvec)$ where $L^\dagger(\expvec)$, in the context of $\expvec$ being a subspace of a Hilbert space, is the family of linear operators $T:\ \expvec\to \fock$ such that the adjoint $T^*$ has domain which contains $\expvec$. 

More specifically, $j_{s,t}:A\ \to L^\dagger(\expvec)$ is a family such that $j_{s,t}(a)$ acts non-identically only on $\fock[[s,t)]$ (adapted), $j_{s,t}(1)=\id_{\fock}$ (unital), satisfies the weak multiplicative property 
  \begin{align*}
    \langle x,j_{s,t}(a^*b)y\rangle=\langle j_{s,t}(a)x,j_{s,t}(b)y\rangle,
  \end{align*}
and is a Fock space L\'evy process, i.e., a family $(j_{s,t})_{0\leq s\leq t}$ of maps $A\to L^\dagger(\expvec)$ such that
\begin{itemize}
\item[(i)] $ j_{t,t} =\id_{\fock}$,
\item[(ii)] $ (j_{r,s}\otimes j_{s,t})\circ \Delta =j_{r,t}$,
\item[(iii)] $\lim_{t\to s}\langle e(0), j_{s,t}(a)e(0))\rangle =1$   and  
\item[(iv)] $\langle e(0), j_{s,t}(a)e(0))\rangle =\langle e(0), j_{s+r,t+r}(a)e(0))\rangle$.
\end{itemize}
The following result of Belton and Wills \cite{belton-wills} tells us that, because the Toeplitz algebra is nicely generated, algebraic unital weak*-homomorphisms are enough to extend to C*-algebraic unital homomorphisms.

\begin{prop}
  There is a one-to-one correspondence between unital weak*-homomorphisms $j:\ \Tp_0\to L^\dagger(\expvec)$ and unital C*-homomorphisms $\hat{j}:\ \Tp\to B(\fock)$.
\end{prop}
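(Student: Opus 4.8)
The statement to prove is the one-to-one correspondence between unital weak*-homomorphisms $j\colon\Tp_0\to L^\dagger(\expvec)$ and unital C*-homomorphisms $\hat j\colon\Tp\to B(\fock)$, attributed to Belton--Wills, exploiting that the Toeplitz algebra is "nicely generated."

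The plan is to invoke the Belton--Wills machinery from \cite{belton-wills}, whose hypothesis is precisely that the C*-algebra be \emph{nicely generated} by a finite or countable family of generators satisfying relations of a suitable polynomial type, together with a norm bound on the generating family; the Toeplitz algebra is universally generated by the single isometry $R$, with the single relation $R^*R=1$, and $\|R\|=1$, so it falls squarely into that class. Concretely, I would proceed as follows. First, given a unital weak*-homomorphism $j\colon\Tp_0\to L^\dagger(\expvec)$, one checks that $j(R)$ is an isometry on $\fock$: from the weak multiplicative property $\langle x,j(R^*R)y\rangle=\langle j(R)x,j(R)y\rangle$ and $j(R^*R)=j(1)=\id_\fock$ we get $\langle j(R)x,j(R)y\rangle=\langle x,y\rangle$ for all $x,y\in\expvec$, and since $\expvec$ is dense in $\fock$ and $j(R)$ is (by the $L^\dagger$ hypothesis) closable/bounded on this domain, it extends to a genuine isometry $V\in B(\fock)$. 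Then the universal property of $\Tp$ as the universal C*-algebra on one isometry yields a unique unital C*-homomorphism $\hat j\colon\Tp\to B(\fock)$ with $\hat j(R)=V$. One must then verify $\hat j$ restricts to $j$ on $\Tp_0$: both agree on the generator $R$ (hence on $R^*$), and $\Tp_0$ is spanned by the words $R_{n,m}=R^nR^{*m}$, so an induction on word length using multiplicativity of $\hat j$ and the weak multiplicative property of $j$ (which, since the images are now bounded operators on a dense domain, upgrades to genuine multiplicativity $j(ab)=j(a)j(b)$) gives $\hat j|_{\Tp_0}=j$.

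Conversely, given a unital C*-homomorphism $\hat j\colon\Tp\to B(\fock)$, restriction to the dense *-subalgebra $\Tp_0$ produces a unital *-homomorphism into $B(\fock)\subseteq L^\dagger(\expvec)$, which is in particular a unital weak*-homomorphism; and these two assignments $j\mapsto\hat j$ and $\hat j\mapsto \hat j|_{\Tp_0}$ are mutually inverse by the uniqueness clause in the universal property and by density of $\Tp_0$ in $\Tp$ (two C*-homomorphisms agreeing on a dense subalgebra coincide). That establishes the bijection. The key point that makes this more than a formality — and the step I expect to be the main obstacle — is the passage from the weak*-homomorphism $j$, whose values are only operators on the non-closed domain $\expvec$ with adjointable-on-$\expvec$ property, to honest \emph{bounded} operators: one needs that $j(R)$, being an isometry on the dense subspace $\expvec$ with $j(R)^*$ also defined on $\expvec$, is bounded with bounded adjoint, so that it genuinely extends to $B(\fock)$ and triggers the universal property. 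This is exactly the content of the Belton--Wills result: for nicely-generated C*-algebras the generator relations force the required boundedness and the universal C*-completion exists and is compatible. I would therefore state the proof as an application of \cite{belton-wills}, spelling out only the verification that $\Tp$ satisfies the "nicely generated" hypothesis (one isometric generator, one relation) and the routine induction identifying $\hat j|_{\Tp_0}$ with $j$, and citing Belton--Wills for the boundedness/extension heart of the matter.
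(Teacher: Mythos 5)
Your proposal is correct and follows essentially the same route as the paper: the key step in both is the computation $\langle j(R)x,j(R)y\rangle=\langle x,j(R^*R)y\rangle=\langle x,y\rangle$ on $\expvec$, which forces $j(R)$ to be isometric and hence boundedly extendable to $\fock$, after which universality of $\Tp$ produces $\hat j$ and an induction on word length identifies $\hat j|_{\Tp_0}$ with $j$. The only cosmetic difference is that you frame the boundedness step as an appeal to the Belton--Wills machinery, whereas the paper simply carries out this short direct argument.
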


\begin{proof}
Clearly given $\hat{j}:\ \Tp\to B(\fock)$ then $j:=\hat{j}|_{\Tp_0}$ defines a unital *-homomorphism $j:\ \Tp_0\to B(\fock)\subseteq L^\dagger(\expvec)$.

Now let $j:\ \Tp_0\to L^\dagger(\expvec)$ this implies that 
\begin{align*}
\norm x^2=\langle x,j(R^*R)x\rangle=\langle j(R)x,j(R)x\rangle=\norm{j(R)x}^2
\end{align*}
for all $x\in \expvec$. Since $\expvec$ is dense in $\fock$, $j(R)$ can be extended to an isometry in $B(\fock)$. Adjointability implies that $j(R^*)$ is also bounded. We can now use induction on word length and linearity to show that $j(R_{n,m})\in B(\fock)$ for all $n,m\in \nn_0$. Now using universality there exists a unital C*-homomorphism $\hat{j}:\ \Tp\to B(\fock)$ such that $\hat{j}(R)=j(R)$.
\end{proof}

\begin{corollary}
There is a one-to-one correspondence between Sch\"urmann triples on $\Tp_0$ and L\'evy processes $j_{s,t}:\ \Tp\to B(\fock)$.
\end{corollary}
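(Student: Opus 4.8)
The plan is to assemble the bijection by chaining together the correspondences already established in this section. Starting from a Sch\"urmann triple $(\rho,\eta,L)$ on $\Tp_0$, Sch\"urmann's theory attaches to it the generating functional $L$ and the associated convolution semigroup of states; the Representation Theorem (Theorem~1.15~\cite{franz}) then realises the corresponding L\'evy process on Fock space as a family of adapted unital weak*-homomorphisms $j_{s,t}\colon\Tp_0\to L^\dagger(\expvec)$ satisfying the axioms (i)--(iv) above, with $t\mapsto\langle e(0),j_{0,t}(\cdot)e(0)\rangle$ recovering the convolution semigroup. Applying the preceding proposition of Belton--Wills \cite{belton-wills} for each pair $s\le t$ extends $j_{s,t}$ to a unital C*-homomorphism $\hat{j}_{s,t}\colon\Tp\to B(\fock)$, and this will be the assignment from Sch\"urmann triples to C*-algebraic L\'evy processes.

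First I would verify that $(\hat{j}_{s,t})_{0\le s\le t}$ is again a L\'evy process, now at the C*-level. Each $\hat{j}_{s,t}$ is a C*-homomorphism, hence contractive and therefore norm-continuous, and it agrees with $j_{s,t}$ on the norm-dense subalgebra $\Tp_0$; consequently adaptedness (the set of $a\in\Tp$ for which $\hat{j}_{s,t}(a)$ acts non-identically only on $\fock[[s,t)]$ is norm-closed and contains $\Tp_0$), the normalisation $\hat{j}_{t,t}=\id_{\fock}$, and the limit and stationarity axioms (iii)--(iv) all pass from $\Tp_0$ to $\Tp$ by continuity and density. For the increment property (ii) I would use that $\Delta$ is, by the proposition making $\Tp$ a C*-bialgebra, a C*-homomorphism $\Tp\to\Tp\otimes\Tp$ extending the algebraic comultiplication, and that $\hat{j}_{r,s}\otimes\hat{j}_{s,t}$ is the C*-homomorphism on $\Tp\otimes\Tp$ restricting to $j_{r,s}\otimes j_{s,t}$ on $\Tp_0\otimes\Tp_0$; then $(\hat{j}_{r,s}\otimes\hat{j}_{s,t})\circ\Delta$ and $\hat{j}_{r,t}$ are two C*-homomorphisms $\Tp\to B(\fock)$ agreeing on the dense subalgebra $\Tp_0$, hence equal.

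Conversely, given a C*-algebraic L\'evy process $j_{s,t}\colon\Tp\to B(\fock)$ I would restrict to $\Tp_0$: since $B(\fock)\subseteq L^\dagger(\expvec)$ and every unital C*-homomorphism satisfies the weak multiplicative property, $(j_{s,t}|_{\Tp_0})$ is a Fock-space L\'evy process on $\Tp_0$; its vacuum expectations $\phi_t:=\langle e(0),j_{0,t}(\cdot)e(0)\rangle$ form a convolution semigroup of states on $\Tp_0$, and differentiating at $t=0$ produces a generating functional $L$, from which Sch\"urmann's reconstruction (the GNS-type construction of $\eta$ and $\rho$ out of $L$) builds a Sch\"urmann triple. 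That the two assignments are mutually inverse then follows from the uniqueness clauses in the ingredients: the Representation Theorem delivers \emph{the} Fock-space L\'evy process attached to a given generating functional and its vacuum expectation is exactly $\phi_t$, Belton--Wills provides uniqueness of the C*-extension, and in the reverse order the Sch\"urmann triple is completely determined by $L$ --- equivalently by the data $\rho(R)$, $\eta(R)$, $L(R-R^*)$ of Theorem~\ref{thm:isolp} --- which is read off from the vacuum expectation of the process.

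Since the conceptual content is carried entirely by the cited results, the main obstacle is bookkeeping rather than ideas. The delicate point is checking that the increment property (ii) genuinely survives the passage from $\Tp_0$ to $\Tp$: this hinges on $\hat{j}_{r,s}\otimes\hat{j}_{s,t}$ really being the C*-extension of $j_{r,s}\otimes j_{s,t}$ --- which again reduces, via the universal ``nicely-generated'' structure, to Belton--Wills applied to $\Tp\otimes\Tp$ --- and on the norm-continuity of $\Delta$, which is automatic for a C*-bialgebra. A second point that needs care is that the functional $L=\tfrac{d}{dt}\big|_{t=0}\phi_t$ extracted from a C*-level process is indeed a generating functional in the sense defined above and reproduces the original triple; this is part of Sch\"urmann's theory, but because the generators occurring here need not be bounded it must be invoked in the form adapted to the present C*-setting rather than the purely algebraic one.
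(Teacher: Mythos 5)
Your proposal is correct and follows exactly the route the paper intends: the corollary is stated as an immediate consequence of chaining Sch\"urmann's bijection (triples $\leftrightarrow$ generating functionals $\leftrightarrow$ convolution semigroups), the Fock-space Representation Theorem, and the Belton--Wills extension proposition proved just above it. The paper offers no written proof, and your careful verification that the L\'evy-process axioms (in particular the increment property via density of $\Tp_0$ and continuity of $\Delta$) survive the passage to the C*-level is precisely the bookkeeping the authors leave implicit.
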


Given such a L\'evy process we get a $C_0$-convolution semigroup of states \cite{lindsay-skalski1} on the C*-algebra $\Tp$ this is given by $\phi_t(a)=\langle \Omega,j_{0,t}(a)\Omega\rangle$. Furthermore we get an associated $C_0$-semigroup $T(t):\ \Tp\to \Tp$ given by $T(t):=(\id\otimes \phi_t)\circ \Delta$.

\begin{example*}
Let $H=\cn$, $V=\id_{\cn}$, $h=1$, and $\lambda=0$ from Theorem \ref{thm:isolp}. This is the natural choice of Brownian motion on the Toeplitz algebra. Firstly Sch\"urmann triples are said to be Gaussian if and only if the associated unital *-homomomorphism is of the form $\rho=\epsilon$.

Furthermore, if the $V\in B(H)$ in Theorem \ref{thm:isolp} is chosen to be unitary, then the associated L\'evy process can be restricted to the quotient C*-algebra $\Tp/K(\ell_2(\nn_0))\cong C(\d B_{\cn})$ the continuous functions on the circle group.

In the case above the associated L\'evy process corresponds to the standard Brownian motion on the real line ``wrapped'' around the circle.

More explicitly the Sch\"urmann triple on the dense *-bialgebra $\Tp_0$ is given by
\begin{align*}
\rho(R_{n,m})=\eps(R_{n,m})=1,\quad \eta(R_{n,m})=n-m,\quad \text{ and }\quad L(R_{n,m})=-\frac{(n-m)^2}{2}.
\end{align*}

This has an associated $C_0$-convolution semigroup of states that acts on the dense *-bialgebra by
\begin{align*}
\phi_t(R_{n,m})=e^{-\frac{(n-m)^2}{2}t}.
\end{align*}
\end{example*}

\section{$\zeta$-regularized traces of polyhomogeneous operators}\label{sec:zeta}

In this section, we want to consider a class of operators that generate convolution semigroups on the Toeplitz algebra $\Tp$ which resemble pseudo-differential operators. In particular, the generator of Brownian motion, i.e., our version of the Laplacian, is an operator of this type. We will then show, that these operators have $\zeta$-regularized traces, i.e., analogues of the Kontsevich-Vishik trace and residue trace. 
Recall the following properties of the Toeplitz algebra $\Tp$ and generators of convolution semigroups on $\Tp$.
\begin{enumerate}
\item[(i)] The space $\Tp_0:=\opn{lin}\l\{R_{n,m};\ n,m\in\nn_0\r\}$ is a dense $^*$-subalgebra of $\Tp$.
\item[(ii)] The co-unit $\eps$ satisfies $\fa m,n\in\nn_0:\ \eps(R_{n,m})=1$.
\item[(iii)] The co-multiplication $\Delta$ satisfies 
  \begin{align*}
    \fa m,n\in\nn_0\ \fa\alpha\in\cn:\ \Delta(\alpha R_{n.m})=\alpha R_{n,m}\otimes R_{n,m}.
  \end{align*}
\item[(iv)] Let $L$ be the generating functional of a convolution semigroup $\omega$. Then, the corresponding operator semigroup is given by $t\mapsto (\id\otimes\omega_t)\circ\Delta$ and has generator $(\id\otimes L)\circ\Delta$.
\end{enumerate}

\begin{definition}
  An operator $H$ on $\Tp$ is called polyhomogeneous if and only if there exists a functional $L:\ \Tp_0\to\cn$ such that $H|_{\Tp_0}=(\id\otimes L)\circ\Delta$ and 
  \begin{align*}
    \ex r\in\rn\ \ex I\sse\nn\ \ex\alpha\in \ell_1(I)\ \ex d\in(\cn[2]_{\Re(\cdot)<r})^I\ \fa m,n\in\nn_0:\ L(R_{n,m})=\sum_{\iota\in I}\alpha_\iota \sigma_{d_\iota}(m,n)
  \end{align*}
  where $\cn_{\Re(\cdot)<r}:=\{z\in\cn;\ \Re(z)<r\}$, $\cn[2]_{\Re(\cdot)<r}:=\cn_{\Re(\cdot)<r}\times\cn_{\Re(\cdot)<r}$, $\sigma_{d_\iota}:\ \rn[2]\to\cn$ is homogeneous of degree $d_\iota$, i.e.,
  \begin{align*}
    \fa \lambda\in\rn_{>0}\ \fa\xi\in\rn[2]:\ \sigma_{d_\iota}(\lambda\xi)=\lambda^{d_\iota}\sigma_{d_\iota}(\xi),
  \end{align*}
  and  $\sum_{\iota\in I}\alpha_\iota \sigma_{d_\iota}(m,n)$ is absolutely convergent.
\end{definition}

\begin{example*}
  The generating functional of Brownian motion is given by
  \begin{align*}
    \fa m,n\in\nn_0:\ L_{BM}(R_{n,m})=-\frac{(n-m)^2}{2}=\l(\frac{-1}{2}n^2m^0\r)+\l(n^1m^1\r)+\l(\frac{-1}{2}n^0m^2\r).
  \end{align*}
  Thus, the generator $H_{BM}$ of the Brownian motion semigroup $B$ is polyhomogeneous with finite $I$ and each $d_\iota=2$. 
\end{example*}

Since $2H_{BM}$ is the Laplace-Beltrami on compact Riemannian manifolds without boundary, we obtain the following definition of the Laplacian on $\Tp$.
\begin{definition}
  Let $H_{BM}=(\id\otimes L_{BM})\circ\Delta$ be the generator of Brownian motion. Then, we call the (polyhomogeneous) operator $\Delta_{\Tp}:=2H_{BM}$ the Laplacian on $\Tp$.
\end{definition}

We are now interested in $\zeta$-regularized traces of polyhomogeneous operators. Thus, in order to compute traces, the following results shine a light on their spectral properties.

\begin{lemma}
  Let $L$ be the generating functional of a convolution semigroup of states on $\Tp$ and $H:=(\id\otimes L)\circ\Delta$. Then, $H$ is a closed, densely defined operator and
  \begin{align*}
    \fa n\in\nn\ \fa \lambda\in\rn_{>\omega}:\ \norm{(\lambda-H)^{-1}}\le\frac{1}{\lambda}.
  \end{align*}
\end{lemma}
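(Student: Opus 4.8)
The plan is to recognize the asserted estimate as the Hille--Yosida resolvent bound for the $C_0$-contraction semigroup $T(t):=(\id\otimes\omega_t)\circ\Delta$, $t\ge0$, where $(\omega_t)_{t\ge0}$ is the convolution semigroup of states on $\Tp$ whose generating functional is $L$ (item (iv) of the list above). First I would check that $(T(t))_{t\ge0}$ is genuinely a $C_0$-semigroup of contractions on $\Tp$. The semigroup law $T(t)\circ T(s)=T(t+s)$ follows from coassociativity of $\Delta$ together with the convolution identity $\omega_s*\omega_t=\omega_{t+s}$, and $T(0)=\id$ is the counital property $(\id\otimes\eps)\circ\Delta=\id$ combined with $\omega_0=\eps$. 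Each $T(t)$ is the composition of the unital $*$-homomorphism $\Delta$ with the slice map $\id\otimes\omega_t$ by a state, hence is unital and completely positive, so $\norm{T(t)}=1$; in particular the growth bound is $0$, which is the $\omega$ appearing in the statement. Strong continuity on all of $\Tp$ is precisely the $C_0$-property of the associated convolution semigroup at the C*-level, available from the Lindsay--Skalski theory invoked earlier; on the dense subalgebra $\Tp_0$ it is immediate from $\lim_{r\to0}\omega_r=\eps$ pointwise.

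Once $(T(t))_{t\ge0}$ is known to be a $C_0$-contraction semigroup, its generator $G$ is automatically closed and densely defined, $(0,\infty)$ lies in its resolvent set, and $(\lambda-G)^{-1}=\int_0^\infty e^{-\lambda t}T(t)\,dt$ as a Bochner integral of contractions; hence $\norm{(\lambda-G)^{-1}}\le\int_0^\infty e^{-\lambda t}\,dt=\lambda^{-1}$ for every $\lambda>0$. The power bound $\norm{(\lambda-G)^{-n}}\le\lambda^{-n}$ then follows either by submultiplicativity of the operator norm or, more sharply, from $(\lambda-G)^{-n}=\tfrac{1}{(n-1)!}\int_0^\infty t^{n-1}e^{-\lambda t}T(t)\,dt$. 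It remains to identify $G$ with $H=(\id\otimes L)\circ\Delta$: differentiating $t\mapsto\omega_t(a)$ at $t=0$ returns $L(a)$ for $a\in\Tp_0$, so $G$ and $H$ agree on $\Tp_0$, and $\Tp_0$ is a core for $G$ by the Sch\"urmann/Lindsay--Skalski correspondence between generating functionals and operator semigroups --- equivalently, one simply takes $H$ to be this closed generator by definition.

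I expect the only real subtlety to be the strong continuity of $T(\cdot)$ on the whole C*-algebra $\Tp$, rather than merely on $\Tp_0$, together with the attendant claim that $\Tp_0$ is a core; both are furnished by the cited C*-level results on convolution semigroups, and the rest is the classical semigroup bookkeeping sketched above. A fully self-contained variant avoids Hille--Yosida altogether: define $R_\lambda:=\int_0^\infty e^{-\lambda t}T(t)\,dt$ directly, observe $\norm{R_\lambda}\le\lambda^{-1}$ at once from contractivity, and check via the resolvent identity that $R_\lambda=(\lambda-H)^{-1}$, which reduces the entire estimate to a one-line computation.
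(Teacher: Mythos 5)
Your proposal is correct and follows essentially the same route as the paper: one observes that $T(t)=(\id\otimes\omega_t)\circ\Delta$ is a $C_0$-contraction semigroup on the Banach space $\Tp$ and then invokes the Hille--Yosida(--Phillips) theorem to obtain closedness, dense definedness, and the resolvent bound. Your write-up is in fact more careful than the paper's, which silently elides both the strong continuity of $T(\cdot)$ on all of $\Tp$ and the identification of $H$ with the semigroup generator (i.e.\ that $\Tp_0$ is a core) --- points you rightly flag as the only genuine subtleties.
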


\begin{proof}
  Let $\phi$ be the convolution semigroup of states generated by $L$. Then, $t\mapsto(\id\otimes\phi_t)\circ\Delta$ satisfies
  \begin{align*}
    \fa t\in\rn_{>0}:\ \norm{T(t)}=\norm{(\id\otimes\phi_t)\circ\Delta}\le\norm{\id\otimes\phi_t}\norm\Delta=1
  \end{align*}
    and is a contraction semigroup on $\Tp$. Since $\Tp$ is a Banach space, the Theorem of Hille-Yosida-Phillips yields the result.
\end{proof}

\begin{lemma}
  Let $H=(\id\otimes L)\circ\Delta$ with $L:\ \Tp_0\to\cn$ linear. Then, the point spectrum $\sigma_p(H)$ of $H$ is given by
  \begin{align*}
    \l\{L(R_{n,m});\ m,n\in\nn_0\r\}\sse\sigma_p(H)
  \end{align*}
  including multiplicities and the spectrum $\sigma(H)$ of $H$ is given by
  \begin{align*}
    \sigma(H)=\oli{\l\{L(R_{n,m});\ m,n\in\nn_0\r\}}.
  \end{align*}
  In particular, if $\l\{L(R_{n,m});\ m,n\in\nn_0\r\}$ is closed in $\cn$, then
  \begin{align*}
    \sigma(H)=\sigma_p(H)=\l\{L(R_{n,m});\ m,n\in\nn_0\r\}
  \end{align*}
  including multiplicities.

  Furthermore, if $\sigma(H)\ssne\cn$, then $H$ is closable.
\end{lemma}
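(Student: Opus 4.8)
The plan is to use that $\Delta$ acts diagonally on the (Hamel) basis $\{R_{n,m}\}$ of $\Tp_{0}$, which turns $H$ into a diagonal operator. Since $\Delta(R_{n,m})=R_{n,m}\otimes R_{n,m}$ we get $HR_{n,m}=(\id\otimes L)(R_{n,m}\otimes R_{n,m})=L(R_{n,m})R_{n,m}$, and each $R_{n,m}$ is a non-zero partial isometry (from $R_{n,m}R_{n,m}^{*}=R^{n}R^{*n}$ one reads off $\norm{R_{n,m}}=1$), so every $L(R_{n,m})$ lies in $\sigma_{p}(H)$. For the multiplicities, write $v=\sum_{(n,m)}c_{n,m}R_{n,m}\in\Tp_{0}$ as a finite sum; then $(\lambda-H)v=\sum_{(n,m)}c_{n,m}\bigl(\lambda-L(R_{n,m})\bigr)R_{n,m}$, which, since $\{R_{n,m}\}$ is a basis, vanishes exactly when $c_{n,m}=0$ for all $(n,m)$ with $L(R_{n,m})\neq\lambda$. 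Hence $\ker(\lambda-H)=\opn{lin}\{R_{n,m}\ ;\ L(R_{n,m})=\lambda\}$, which is precisely the asserted multiplicity.

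Next I would establish $\oli{\{L(R_{n,m})\ ;\ m,n\in\nn_{0}\}}\sse\sigma(H)$. The inclusion $\{L(R_{n,m})\}\sse\sigma_{p}(H)\sse\sigma(H)$ is immediate, and if $\lambda=\lim_{k}L(R_{n_{k},m_{k}})$ is a cluster point, then the unit vectors $R_{n_{k},m_{k}}$ satisfy $\norm{(\lambda-H)R_{n_{k},m_{k}}}=\abs{\lambda-L(R_{n_{k},m_{k}})}\to0$, so $\lambda$ is an approximate eigenvalue, hence in $\sigma(H)$. For the reverse inclusion fix $\lambda$ with $d:=\inf_{m,n}\abs{\lambda-L(R_{n,m})}>0$; on $\Tp_{0}$ the operator $\lambda-H$ is a bijection onto $\Tp_{0}$ with inverse $G_{\lambda}$ determined on the basis by $G_{\lambda}R_{n,m}=\bigl(\lambda-L(R_{n,m})\bigr)^{-1}R_{n,m}$, and the whole question reduces to whether $G_{\lambda}$ extends to a bounded operator on $\Tp$ (which, together with the closability proven below, yields $\lambda\in\rho(H)$ and $(\lambda-H)^{-1}=G_{\lambda}$).

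That boundedness is the main obstacle. When $L$ is the generating functional of a convolution semigroup of states it is already contained in the preceding lemma (for real $\lambda>\omega$); in general the pointwise bound $\norm{G_{\lambda}R_{n,m}}\le d^{-1}$ is not sufficient, since $\{R_{n,m}\}$ is only a Hamel basis of the dense subspace $\Tp_{0}$. I would use the gauge action of the circle on $\Tp$, i.e.\ the automorphisms $\alpha_{\theta}$ with $\alpha_{\theta}(R)=e^{i\theta}R$, so that $\alpha_{\theta}(R_{n,m})=e^{i(n-m)\theta}R_{n,m}$ and $\norm{\alpha_{\theta}}=1$: both $H$ and $G_{\lambda}$ commute with every $\alpha_{\theta}$ and therefore preserve the decomposition of $\Tp$ into the closed linear span of its homogeneous subspaces $\Tp[(k)]$, $k\in\zn$. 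For $k\ge0$ the factorisation $R_{m+k,m}=R^{k}R^{m}R^{*m}$ identifies $\Tp[(k)]$, via the isometry $R^{k}$, with the abelian ``diagonal'' subalgebra $\Tp[(0)]$ --- the C*-algebra of convergent sequences --- and under this identification $G_{\lambda}$ acts as the discrete operator $(\gamma_{j})_{j}\mapsto\bigl(\sum_{m\le j}(\gamma_{m}-\gamma_{m-1})\,(\lambda-L(R_{m+k,m}))^{-1}\bigr)_{j}$ (the case $k<0$ being symmetric). An Abel summation bounds the norm of this operator by $d^{-1}+\sum_{m}\abs{(\lambda-L(R_{m+k,m}))^{-1}-(\lambda-L(R_{m+1+k,m+1}))^{-1}}$, so it remains to verify that this is finite uniformly in $k$ --- a summable-variation condition on $L$ along the diagonals, which holds in the cases of interest (for the Brownian-motion generator $L$ is even constant along each diagonal, so $G_{\lambda}$ is a Fourier multiplier with summable symbol). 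Granting this, $G_{\lambda}\in B(\Tp)$, whence $\sigma(H)=\oli{\{L(R_{n,m})\}}$; the ``in particular'' clause is then the special case in which this closure is already closed.

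Finally, for the closability clause I would work in the faithful representation of $\Tp$ on $\ell_{2}(\nn_{0})$. Along each diagonal $\{(i,j)\ ;\ i-j=\delta\}$ the matrices of the $R_{n,m}$ with $n-m=\delta$ are nested, so Abel summation expresses the $(i,j)$-entry of $Hx$ as a \emph{finite} linear combination --- depending only on $i,j$ and $L$ --- of the entries of $x$ on the same diagonal with first index $\le i$. Hence if $x_{n}\to0$ in $\Tp$ and $Hx_{n}\to y$, then each fixed entry of $Hx_{n}$ is a fixed finite combination of entries of $x_{n}$ tending to $0$, so the corresponding entry of $y$ vanishes; thus $y=0$ and $H$ is closable --- indeed for every linear $L$, which in particular gives the stated conditional version.
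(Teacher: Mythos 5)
Your handling of the point spectrum, of the inclusion $\oli{\l\{L(R_{n,m});\ m,n\in\nn_0\r\}}\sse\sigma(H)$, and of closability is correct, and your closability argument (each matrix entry of $Hx$ in the representation on $\ell_2(\nn_0)$ is a fixed finite linear combination of matrix entries of $x$, which are norm-continuous functionals) is unconditional --- it works for every linear $L$ --- and is more robust than the paper's route, which derives closability from bounded invertibility (an implication that is itself delicate, since an operator that is bounded below with dense range need not be closable). More importantly, you have correctly isolated the step that the paper's proof simply asserts: that $\lambda\notin\oli{\l\{L(R_{n,m});\ m,n\in\nn_0\r\}}$ makes $(\lambda-H)^{-1}$ \emph{bounded} on $\Tp_0$. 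This is a genuine gap, and your Abel-summation analysis along the diagonals shows exactly why: the norm of $\sum_m c_mR_{m+k,m}$ is the supremum of the partial sums $\sum_{m\le j}c_m$, so a multiplier that oscillates along a diagonal can blow up. Concretely, take $L(R_{m,m})=2+(-1)^m$ and $L(R_{n,m})=3$ for $n\ne m$; then $\oli S=\{1,3\}$ and $0\notin\oli S$, but the elements $x_N\in\Tp[(0)]_0$ with diagonal entries $(-1)^j$ for $j\le N$ have norm $1$ while $\norm{(0-H)^{-1}x_N}\to\infty$, so $0$ is an approximate eigenvalue of the closure of $H$. Hence the lemma is false for arbitrary linear $L$, and a hypothesis of the kind you propose (bounded variation of $(\lambda-L)^{-1}$ along diagonals) is genuinely needed; it does hold for the polyhomogeneous generators used in the rest of the paper.

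One further caveat about your proposed completion: even granting per-diagonal bounds that are finite uniformly in $k$, you cannot yet conclude $G_\lambda\in B(\Tp)$. An operator commuting with the gauge action and uniformly bounded on each spectral subspace $\Tp[(k)]$ need not be bounded on $\Tp$ (compare the Hilbert-transform multiplier on $C(\rn/{2\pi\zn})$, which has norm $1$ on each one-dimensional spectral subspace but is unbounded); passing from the homogeneous components to the whole algebra requires summability over $k$, not just uniformity. For the Brownian-motion resolvent this is available, since $G_\lambda=\sum_{k\in\zn}\l(\lambda+\tfrac{k^2}{2}\r)^{-1}E_k$ with contractive spectral projections $E_k$ and a summable coefficient sequence. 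With that amendment your argument proves the lemma for the operators the paper actually uses, which is more than the paper's own proof establishes.
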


\begin{proof}
  Let $S:=\l\{L(R_{n,m});\ m,n\in\nn_0\r\}$. Then, we observe
  \begin{align*}
    \fa m,n\in\nn_0:\ HR_{n,m}=L(R_{n,m})R_{n,m},
  \end{align*}
  i.e., $S\sse\sigma_p$ and $\oli{S}\sse\sigma(H)$ since the spectrum is always closed.

  Let $\lambda\in\cn\setminus\oli{S}$. Then, $\lambda-H$ is boundedly invertible\footnote{By ``boundedly invertible on $\Tp_0$'' we mean that $\lambda-H:\ \Tp_0\to\Tp_0$ is bijective, i.e., the inverse relation $(\lambda-H)^{-1}:=\{(x,y)\in\Tp[2]_0;\ (y,x)\in\lambda-H\}$ is an operator, and $(\lambda-H)^{-1}:\ \Tp_0\to\Tp_0$ is bounded with respect to the topology induced by $\Tp$.}
    on $\Tp_0$ and, since $\Tp_0$ is dense in $\Tp$, we obtain $\sigma(H)\sse\oli{S}$.

  Finally, assume $\sigma(H)\ssne\cn$ and let $\lambda\in\rho(H)$. Then, $\lambda-H$ is boundedly invertible, i.e., closable. Since $H$ is closable if and only if $\lambda-H$ is closable, we obtain the assertion.
\end{proof}

\begin{example*}
  The Laplacian $\Delta_{\Tp}$ and the heat-semigroup $T$ (generated by $\Delta_{\Tp}$) have pure point spectrum
  \begin{align*}
    \sigma(\Delta_{\Tp})=&\sigma_p(\Delta_{\Tp})=\l\{-(n-m)^2;\ m,n\in\nn_0\r\}\\
    \fa t\in\rn_{\ge0}:\ \sigma(T(t))=&\sigma_p(T(t))=\l\{\exp\l(-(n-m)^2t\r);\ m,n\in\nn_0\r\}
  \end{align*}
  including multiplicities.
\end{example*}

\begin{remark*}
  Here we can see two very important differences to the classical theory. Namely, $\Delta_{\Tp}$ does not have compact resolvent and the heat-semigroup is not a semigroup of trace class operators (in fact, they are not even compact). In particular, this means that the heat-trace will need to be regularized. 
\end{remark*}

Hence, we know which polyhomogeneous operators have pure point spectrum. However, since ``$\tr H=\sum_{\lambda\in\sigma(H)\setminus\{0\}}\mu_\lambda \lambda$'' (where $\mu_\lambda$ denotes the multiplicity of $\lambda$) will not converge in general, the idea is to use a spectral $\zeta$-regularization similar to ``$\sum_{n\in\nn}n=\zeta_R(-1)$'' where $\zeta_R$ is the Riemann $\zeta$-function.

\begin{definition}
  Let $\Gf$ be a holomorphic family of operators satisfying 
  \begin{align*}
    \fa z\in\cn:\ \Gf(z)=(\id\otimes L_{\Gf(z)})\circ\Delta=\Gf_0(z)+\Gf_p(z)
  \end{align*}
 such that each $\l\{L_{\Gf(z)}(R_{n,m});\ m,n\in\nn\r\}$ is closed, $\Gf_0(z)$ is of trace class for all $z\in\cn_{\Re(\cdot)<R}$ with $R\in\rn_{>0}$, and $\Gf_p$ is polyhomogeneous with 
  \begin{align*}
    \fa m,n\in\nn_0:\ L_{p,\Gf(z)}(R_{n,m})=\sum_{\iota\in I}\alpha_\iota(z)\sigma_{d_\iota+\delta z}(m,n)
  \end{align*}
  where each $\alpha_\iota$ is holomorphic and $\delta\in\rn_{>0}$. Then, we call $\Gf$ a gauged polyhomogeneous operator with index set $I$.

  Furthermore, we call $\Gf$ normally gauged if and only if $\delta=1$. $\delta$ is called the gauge scaling.
\end{definition}

\begin{theorem}\label{thm:existence-zeta-function}
  Let $\Gf=\Gf_0+\Gf_p$ be a gauged polyhomogeneous operator on $\Tp$ with
  \begin{align*}
    \fa m,n\in\nn_0:\ L_{p,\Gf(z)}(R_{n,m})=\sum_{\iota\in I}\alpha_\iota(z)\sigma_{d_\iota+\delta z}(m,n)
  \end{align*}
  such that $\sigma(\Gf_p(z))=\sigma_p(\Gf(z))=\{L_{p,\Gf(z)}(R_{n,m});\ m,n\in\nn_0\}$. Then, $\Gf$ is of trace class if $\Re(d_\iota+\delta z)<-2$.

  Furthermore, the meromorphic extension $\zeta(\Gf)$ of
  \begin{align*}
    \cn_{\Re(\cdot)<\frac{-2-\sup\{\Re(d_\iota);\ \iota\in I\}}{\delta}}\ni z\mapsto\tr\Gf(z)\in\cn
  \end{align*}
  exists on a half space $\cn_{\Re(\cdot)<R}$ with $R\in\rn_{>0}$ and has at most simple poles at points in $\l\{\frac{-2-d_\iota}{\delta};\ \iota\in I\r\}$.

\end{theorem}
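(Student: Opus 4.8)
The plan is to reduce the trace $\tr\Gf(z) = \tr\Gf_0(z) + \tr\Gf_p(z)$ to an explicit multiple Dirichlet-type series in the variable $z$ and then identify its meromorphic continuation. First I would dispose of $\Gf_0$: it is of trace class on the half-space $\cn_{\Re(\cdot)<R}$ by hypothesis, and since $\Gf_0$ is a holomorphic family of trace-class operators, $z\mapsto\tr\Gf_0(z)$ is already holomorphic on $\cn_{\Re(\cdot)<R}$ and contributes no poles. So the entire analysis is about $\Gf_p(z)$.

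For $\Gf_p(z)$, the previous lemma on point spectra (applied under the stated hypothesis that $\sigma(\Gf_p(z))=\sigma_p(\Gf_p(z))=\{L_{p,\Gf(z)}(R_{n,m});\ m,n\in\nn_0\}$ including multiplicities) gives, formally,
\begin{align*}
  \tr\Gf_p(z)=\sum_{m,n\in\nn_0}L_{p,\Gf(z)}(R_{n,m})=\sum_{\iota\in I}\alpha_\iota(z)\sum_{m,n\in\nn_0}\sigma_{d_\iota+\delta z}(m,n).
\end{align*}
The key step is to control the inner double sum. Since $\sigma_{d_\iota+\delta z}$ is homogeneous of degree $d_\iota+\delta z$ on $\rn[2]$, I would split off the diagonal ray and, on the rest, pass to ``polar-type'' coordinates $m = k\cos\vartheta$, $n = k\sin\vartheta$ along lattice points: writing $\sigma_{d_\iota+\delta z}(m,n)=(m^2+n^2)^{(d_\iota+\delta z)/2}\,\tilde\sigma_\iota\!\left(\tfrac{m}{\sqrt{m^2+n^2}},\tfrac{n}{\sqrt{m^2+n^2}}\right)$ with $\tilde\sigma_\iota$ the restriction of $\sigma_{d_\iota}$ to the unit circle (a bounded function since it is continuous on a compact set away from where $\sigma_{d_\iota}$ might blow up — one needs $\sigma_{d_\iota}$ continuous on $\rn[2]\setminus\{0\}$, which follows from homogeneity plus the absolute-convergence hypothesis in the definition of polyhomogeneous). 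Then the double sum is dominated in absolute value by $C_\iota\sum_{(m,n)\neq 0}(m^2+n^2)^{\Re(d_\iota+\delta z)/2}$, which converges precisely when $\Re(d_\iota+\delta z) < -2$; this proves the trace-class claim. Summing over the finite-$\ell_1$ index set $I$ and using $\alpha\in\ell_1(I)$ handles convergence over $I$.

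For the meromorphic continuation and pole location I would compare the lattice sum $\sum_{(m,n)\in\nn_0^2\setminus 0}\sigma_{d_\iota+\delta z}(m,n)$ to the corresponding integral $\int_{[0,\infty)^2}\sigma_{d_\iota+\delta z}(x,y)\,dx\,dy$ via an Euler--Maclaurin / Mellin argument: the integral is, by homogeneity in $2$ dimensions, of the form (constant depending on $\iota$) times $\int_0^\infty k^{d_\iota+\delta z+1}\,\chi(k)\,dk$ after angular integration, whose Mellin structure produces exactly a simple pole when $d_\iota+\delta z+2 = 0$, i.e. $z=\frac{-2-d_\iota}{\delta}$, and the difference (sum minus integral) is holomorphic on a strictly larger half-space by the standard smoothing in Euler--Maclaurin. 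Assembling: $\zeta(\Gf)(z)=\tr\Gf_0(z)+\sum_{\iota\in I}\alpha_\iota(z)\,F_\iota(z)$ where each $F_\iota$ is meromorphic on some $\cn_{\Re(\cdot)<R}$ with at most a simple pole at $\frac{-2-d_\iota}{\delta}$; since $\alpha_\iota$ is holomorphic, the poles of the sum lie in $\{\frac{-2-d_\iota}{\iota\in I}\}$ and remain at most simple (possible cancellations or coincidences among the $\frac{-2-d_\iota}{\delta}$ can only help).

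The main obstacle I anticipate is the interchange of the sum over $I$ with the lattice sums and the justification that the resulting series of meromorphic functions $\sum_{\iota\in I}\alpha_\iota(z)F_\iota(z)$ converges locally uniformly to a meromorphic function on a genuine half-space $\cn_{\Re(\cdot)<R}$ with $R>0$ — when $I$ is infinite one needs a uniform-in-$\iota$ bound on $F_\iota$ away from its pole, together with a uniform lower bound $\inf_\iota\big(\tfrac{-2-\Re(d_\iota)}{\delta}\big)$ or at least that the pole locations do not accumulate in the relevant region, plus summability of $\alpha$ against these bounds. This is where the hypotheses $\alpha\in\ell_1(I)$ and $\Re(d_\iota)<r$ (uniform) must be used carefully; the homogeneity degree-$2$ counting is what makes the threshold $-2$ and hence the exponent $\frac{-2-d_\iota}{\delta}$ appear, mirroring the classical $\frac{j-a-\dim M}{q}$ with $\dim M$ replaced by $2$.
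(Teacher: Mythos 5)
Your argument is correct in substance, but it takes a genuinely different and more self-contained route than the paper. The paper's proof is essentially two lines: after discarding $\Gf_0$ exactly as you do, it observes that $\Gf_p(z)$ has the same spectrum as the gauged pseudo-differential operator $\sum_{\iota\in I}\alpha_\iota(z)\sigma_{d_\iota+\delta z}\l(\frac{\abs{\nabla}}{2}\r)$ on the $2$-torus $\rn[2]/{2\pi\zn[2]}$, and then cites the established theory of gauged symbols (Guillemin, Kontsevich--Vishik) to get the trace-class threshold $\Re(d_\iota+\delta z)<-\dim=-2$, the meromorphic continuation, and the simple poles at $\frac{-2-d_\iota}{\delta}$. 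You instead unwind what that classical theory actually does in this special case: write the trace as a lattice sum over $\nn_0^2$, use homogeneity and two-dimensional lattice counting for the trace-class criterion, and run the Euler--Maclaurin/Mellin comparison in which $\int_1^\infty r^{d_\iota+\delta z+1}\,dr=-(d_\iota+\delta z+2)^{-1}$ produces exactly the simple pole at $z=\frac{-2-d_\iota}{\delta}$. The two arguments rest on the same mechanism; yours buys transparency and avoids the somewhat loosely stated identification with a torus operator, while the paper's buys brevity and inherits uniformity statements from the established calculus. The obstacle you flag --- locally uniform convergence of $\sum_{\iota\in I}\alpha_\iota(z)F_\iota(z)$ and non-accumulation of the poles in $\cn_{\Re(\cdot)<R}$ when $I$ is infinite --- is real and is not addressed by the paper either; in the classical setting it is resolved because the degrees form a sequence $m-j$, $j\in\nn_0$, tending to $-\infty$. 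Likewise, both arguments implicitly require the $\sigma_{d_\iota}$ to be bounded (for the trace-class bound) and smooth (for the Euler--Maclaurin remainder) on the relevant arc of the unit circle, which the definition of polyhomogeneous does not literally guarantee; this shared gap is worth noting but is not specific to your approach.
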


\begin{proof}
  Since $\Gf_0$ is of trace class on a half space $\cn_{\Re(\cdot)<R}$ with $R\in\rn_{>0}$, it suffices to consider $\Gf_p$. Since $\Gf_p(z)$ has the same spectrum as $D(z):=\sum_{\iota\in I}\alpha_\iota(z)\sigma_{d_\iota+\delta z}\l(\frac{\abs\nabla}{2}\r)$ on $\rn[2]/{2\pi\zn[2]}$. Thus, the result follows from the known pseudo-differential theory.
\end{proof}

\begin{corollary}
  Let $\Gf$ and $\Hf$ be gauged polyhomogeneous operators with $\Gf(0)=\Hf(0)$.
  \begin{enumerate}
  \item[(i)] The residue $c_{-1}(\zeta(\Gf),0)$ of $\zeta(\Gf)$ in zero is gauge-invariant up to the gauge scalings $\delta_{\Gf}$ and $\delta_{\Hf}$. More precisely,
    \begin{align*}
      \frac{c_{-1}(\zeta(\Gf),0)}{\delta_{\Gf}}=\frac{c_{-1}(\zeta(\Hf),0)}{\delta_{\Hf}}.
    \end{align*}
  \item[(ii)] Let $\fa\iota\in I:\ d_\iota\ne-2$. Then, the constant Laurent coefficient is gauge-invariant, i.e.,
    \begin{align*}
      c_{0}(\zeta(\Gf),0)=c_{0}(\zeta(\Hf),0)
    \end{align*}
  \end{enumerate}
\end{corollary}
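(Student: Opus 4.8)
The plan is to reduce the statement about the $\zeta$-function $\zeta(\Gf)$ to the corresponding statement for the associated classical pseudo-differential operator $D(z)$ on $\rn[2]/{2\pi\zn[2]}$ that was already set up in the proof of Theorem~\ref{thm:existence-zeta-function}. By the previous theorem, $\Gf_p(z)$ has the same spectrum (including multiplicities) as $D(z)=\sum_{\iota\in I}\alpha_\iota(z)\sigma_{d_\iota+\delta z}\l(\frac{\abs\nabla}{2}\r)$, and $\Gf_0(z)$ contributes only a trace-class perturbation that is holomorphic on a half-plane. Hence $\zeta(\Gf)(z)=\tr\Gf_0(z)+\zeta_{\mathrm{cl}}(D)(z)$, where $\zeta_{\mathrm{cl}}(D)$ is the classical $\zeta$-function on the $2$-torus, so the Laurent expansion of $\zeta(\Gf)$ at $0$ differs from that of $\zeta_{\mathrm{cl}}(D)$ only by the \emph{holomorphic} term $\tr\Gf_0(z)$, whose value at $0$ is $\tr\Gf_0(0)=\tr\Gf(0)-\tr\Gf_p(0)$.

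For part (i), I would invoke the classical gauge-invariance of the Wodzicki residue: for the $2$-torus, $c_{-1}(\zeta_{\mathrm{cl}}(D),0)=\frac{1}{\delta}\res(\sigma_{-2}\text{-component of }D(0))$, i.e.\ the residue picks out precisely the degree $-2$ homogeneous component of the symbol of $D(0)$, independently of the gauge. Since $\Gf(0)=\Hf(0)$ forces $D_{\Gf}(0)$ and $D_{\Hf}(0)$ to have the same symbol (their spectra agree and the homogeneous pieces are determined by the $\sigma_{d_\iota}$ at $z=0$), the residues agree after dividing by the respective gauge scalings $\delta_{\Gf}$, $\delta_{\Hf}$. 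The holomorphic term $\tr\Gf_0(z)$ contributes nothing to the residue. This is exactly the statement $\frac{c_{-1}(\zeta(\Gf),0)}{\delta_{\Gf}}=\frac{c_{-1}(\zeta(\Hf),0)}{\delta_{\Hf}}$.

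For part (ii), the hypothesis $d_\iota\ne-2$ for all $\iota$ means the symbol of $D(0)$ has no degree $-2$ homogeneous component, so by Theorem~\ref{thm:existence-zeta-function} the point $0$ is not among the possible poles $\l\{\frac{-2-d_\iota}{\delta}\r\}$ and $\zeta_{\mathrm{cl}}(D)$ is holomorphic at $0$; the classical Kontsevich--Vishik value $c_0(\zeta_{\mathrm{cl}}(D),0)=\trKV D(0)$ is then gauge-independent, being determined solely by the symbol of $D(0)$ (the Kontsevich--Vishik trace depends only on the operator, not on the gauging — this is the ``non-critical'' case recalled in the introduction). Adding the common holomorphic term: $c_0(\zeta(\Gf),0)=\tr\Gf_0(0)+\trKV D_{\Gf}(0)$, and since $\Gf(0)=\Hf(0)$ gives both $\tr\Gf_0(0)=\tr\Hf_0(0)$ (as $\Gf_0(0)=\Gf(0)-\Gf_p(0)=\Hf(0)-\Hf_p(0)=\Hf_0(0)$, using that the polyhomogeneous parts at $z=0$ are determined by $\Gf(0)=\Hf(0)$) and $\trKV D_{\Gf}(0)=\trKV D_{\Hf}(0)$, we conclude $c_0(\zeta(\Gf),0)=c_0(\zeta(\Hf),0)$.

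The main obstacle I anticipate is making rigorous the claim that $\Gf(0)=\Hf(0)$ actually pins down both the trace-class remainder $\Gf_0(0)$ and the homogeneous symbol components of $D_{\Gf}(0)$ separately — a priori the decomposition $\Gf=\Gf_0+\Gf_p$ is part of the \emph{data} of a gauged polyhomogeneous operator, not canonically determined by $\Gf(0)$ alone. One needs that the homogeneous components $\sigma_{d_\iota}$ are recoverable from the functional $L_{\Gf(0)}$ (e.g.\ by the asymptotic-expansion argument underlying the Kontsevich--Vishik construction), so that two gaugings of the same operator at $z=0$ share the same ``classical symbol'' in the relevant sense; this is where the careful bookkeeping lives, and it is the reason the $d_\iota\ne-2$ hypothesis is needed in (ii) but not in (i) (the residue only sees one component, which is gauge-robust by Wodzicki's theorem regardless).
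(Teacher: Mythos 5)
Your overall route --- identifying $\zeta(\Gf)$ with the classical $\zeta$-function of the operator $D(z)=\sum_{\iota\in I}\alpha_\iota(z)\sigma_{d_\iota+\delta z}(\tfrac{\abs{\nabla}}{2})$ on $\rn[2]/{2\pi\zn[2]}$ up to a holomorphic trace-class term, and then invoking gauge-invariance of the Wodzicki residue and of the Kontsevich--Vishik trace --- is exactly the argument the paper intends: the corollary is stated without proof as a consequence of Theorem~\ref{thm:existence-zeta-function}, whose proof is precisely this spectral reduction to the $2$-torus. The worry you raise at the end (whether $\Gf(0)=\Hf(0)$ really pins down the homogeneous symbol data and the trace-class remainder separately) is legitimate; it is worth noting that the paper later circumvents exactly this point in Lemma~\ref{lemma-gauge-independence} by the difference-quotient trick $\If(z)=\frac{\Gf(z)-\Hf(z)}{z}$, which yields gauge-independence of the lowest-order Laurent coefficient without ever identifying that coefficient with a residue or a Kontsevich--Vishik trace; that same argument would deliver part (ii) here more cheaply.

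There is, however, an algebraic inconsistency in your part (i). Your own (and the standard classical) input is $c_{-1}(\zeta(\Gf),0)=\frac{1}{\delta_{\Gf}}\res(\cdots)$, where $\res(\cdots)$ depends only on $\Gf(0)$. From this the gauge-invariant combination is $\delta_{\Gf}\,c_{-1}(\zeta(\Gf),0)=\res(\cdots)=\delta_{\Hf}\,c_{-1}(\zeta(\Hf),0)$; \emph{dividing} by the gauge scalings instead produces $\frac{1}{\delta^2}\res(\cdots)$, which is not gauge-invariant. So the conclusion you display, $\frac{c_{-1}(\zeta(\Gf),0)}{\delta_{\Gf}}=\frac{c_{-1}(\zeta(\Hf),0)}{\delta_{\Hf}}$, does not follow from the formula you wrote two lines earlier (the two versions agree only when $\delta_{\Gf}=\delta_{\Hf}$). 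This mirrors the way the statement itself is phrased, but as a proof the final step of (i) is a non sequitur: you should either justify the division form independently or record the multiplicative normalization that your computation actually yields. Part (ii) is fine modulo the symbol-determination caveat you already flagged.
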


\begin{definition}
  Let $\Gf$ be a gauged polyhomogeneous operator. 
  \begin{enumerate}
  \item[(i)] $\Gf(0)$ is called non-critical if and only if $\fa\iota\in I:\ d_\iota\ne-2$.
  \item[(ii)] Let $\Gf(0)$ be non-critical. Then, we define the $\zeta$-regularized trace of $\Gf(0)$ as
    \begin{align*}
      \tr_\zeta(\Gf(0)):=\zeta(\Gf)(0).
    \end{align*}
  \end{enumerate}
\end{definition}

Since criticality, i.e., whether or not there is a $d_\iota=-2$, determines the possible existence of a pole in zero, we will use the following terminology. The lowest order Laurent coefficient of $\zeta(\Gf)$ is independent of chosen gauge, i.e., it depends only on $\Gf(0)$, justifying the definition
\begin{align*}
  \loLc(\Gf(0)):=
  \begin{cases}
    c_{-1}\l(\zeta(\Gf),0\r)&,\ \Gf(0)\text{ critical}\\
    c_{0}\l(\zeta(\Gf),0\r)&,\ \Gf(0)\text{ non-critical}
  \end{cases}
\end{align*}
independent on whether or not these values are zero.

\begin{prop}
  The lowest order Laurent coefficient is tracial given any normal gauge.

  More precisely, let $A=(\id\otimes L_A)\circ\Delta$ and $B=(\id\otimes L_B)\circ\Delta$ be polyhomogeneous operators. Then, $AB=BA$ and, if $AB$ is non-critical, $\tr_\zeta(AB)=\tr_\zeta(BA)$.
\end{prop}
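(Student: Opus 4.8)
The only substantive point is the identity $AB=BA$; everything else follows by unwinding definitions. I would begin by noting that every polyhomogeneous operator is diagonal on the basis $\{R_{n,m};\ m,n\in\nn_0\}$ of $\Tp_0$: if $H=(\id\otimes L)\circ\Delta$ then, since $\Delta(R_{n,m})=R_{n,m}\otimes R_{n,m}$,
\[
HR_{n,m}=(\id\otimes L)(R_{n,m}\otimes R_{n,m})=L(R_{n,m})R_{n,m},
\]
exactly as in the computation of $\sigma_p(H)$ above; in particular $H$ maps $\Tp_0$ into $\Tp_0$, so products of polyhomogeneous operators are unambiguously defined on $\Tp_0$. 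Then, for $A=(\id\otimes L_A)\circ\Delta$ and $B=(\id\otimes L_B)\circ\Delta$ and any $m,n\in\nn_0$,
\begin{align*}
ABR_{n,m}&=L_B(R_{n,m})\,AR_{n,m}=L_B(R_{n,m})L_A(R_{n,m})R_{n,m}\\
&=L_A(R_{n,m})L_B(R_{n,m})R_{n,m}=BAR_{n,m},
\end{align*}
the third equality being nothing but commutativity of $\cn$. As the $R_{n,m}$ span $\Tp_0$, this gives $AB=BA$ on $\Tp_0$, and in fact $AB$ and $BA$ are the same polyhomogeneous operator, determined by the common functional $L_{AB}:=L_A\cdot L_B=L_B\cdot L_A$.

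Next I would verify that $AB$ really is polyhomogeneous, so that $\tr_\zeta(AB)$ and $\loLc(AB)$ are meaningful. Writing $L_A(R_{n,m})=\sum_{\iota\in I}\alpha_\iota\sigma_{d_\iota}(m,n)$ and $L_B(R_{n,m})=\sum_{\kappa\in K}\beta_\kappa\tau_{e_\kappa}(m,n)$ as in the definition, absolute convergence of the two series lets one expand the product as the absolutely convergent double sum
\[
L_{AB}(R_{n,m})=\sum_{(\iota,\kappa)\in I\times K}\alpha_\iota\beta_\kappa\,(\sigma_{d_\iota}\tau_{e_\kappa})(m,n),
\]
in which $\sigma_{d_\iota}\tau_{e_\kappa}$ is homogeneous of degree $d_\iota+e_\kappa$, the family $(\alpha_\iota\beta_\kappa)_{(\iota,\kappa)}$ lies in $\ell_1(I\times K)$, and the real parts $\Re(d_\iota+e_\kappa)$ remain bounded above; identifying the countable set $I\times K$ with a subset of $\nn$ puts $L_{AB}$ in exactly the form the definition requires. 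In particular $AB$ is non-critical precisely when no $d_\iota+e_\kappa$ equals $-2$, which is the same condition for $BA$.

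Finally, by the gauge-invariance Corollary above, the constant Laurent coefficient $c_0(\zeta(\cdot),0)$ in the non-critical case — and, for normal gauges, the residue $c_{-1}(\zeta(\cdot),0)$ in the critical case — depends only on the underlying polyhomogeneous operator and not on the chosen gauge, which is precisely what makes $\tr_\zeta$ and $\loLc$ well-defined functions of the operator. Since $AB$ and $BA$ coincide as polyhomogeneous operators, it follows at once that $\tr_\zeta(AB)=\tr_\zeta(BA)$ whenever $AB$ is non-critical, and $\loLc(AB)=\loLc(BA)$ in general given normal gauges.

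I do not anticipate a real obstacle. In the classical pseudo-differential setting traciality of the Kontsevich--Vishik trace is delicate because $AB$ and $BA$ are genuinely different operators whose symbols agree only up to lower-order commutator terms; here all polyhomogeneous operators are simultaneously diagonal, hence literally commute, so the trace property is automatic. The only steps needing care are purely bookkeeping: checking that the pointwise product of two admissible symbol/coefficient data is again admissible (absolute convergence of the Cauchy product, $\ell_1$-ness of the product sequence, the re-indexing $I\times K\hookrightarrow\nn$), and invoking the Corollary above for well-definedness of $\tr_\zeta$ and $\loLc$.
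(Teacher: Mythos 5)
Your proposal is correct and follows essentially the same route as the paper: establish $AB=BA$ by observing that all polyhomogeneous operators act diagonally on the basis $\{R_{n,m}\}$, then invoke gauge independence of the lowest order Laurent coefficient (the paper phrases this by explicitly comparing the gauges $A\Hf$ and $\Hf A$ with $\Hf(0)=B$, which is the same argument). Your additional verification that $L_A\cdot L_B$ is again an admissible polyhomogeneous functional is a worthwhile bookkeeping step that the paper leaves implicit.
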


\begin{proof}
  Note that $AB=BA$ since
  \begin{align*}
    \fa m,n\in\nn_0:\ ABR_{n,m}=&A\l(L_B(R_{n,m})R_{n,m}\r)
    =L_A(R_{n,m})L_B(R_{n,m})R_{n,m}\\
    =&BAR_{n,m}.
  \end{align*}
  Let $\Hf$ be a gauged polyhomogeneous operator with $\Hf(0)=B$, $\Gf_1:=A\Hf$, and $\Gf_2:=\Hf A$. Then, $\Gf_1=\Gf_2$ and, hence, $\zeta(\Gf_1)=\zeta(\Gf_2)$. Since $\loLc$ is gauge independent, we obtain $\loLc(AB)=\loLc(BA)$.
\end{proof}

\begin{example*}
  Consider the Laplacian $\Delta_{\Tp}=(\id\otimes L_{\Delta_{\Tp}})\circ\Delta$ with
  \begin{align*}
    \fa m,n\in\nn_0:\ L_{\Delta_{\Tp}}\l(R_{n,m}\r)=-(n-m)^2=-n^2+2mn-m^2.
  \end{align*}
  Thus, $\Delta_{\Tp}$ is non-critical and $\tr_\zeta(\Delta_{\Tp})$ can be written as a Kontsevich-Vishik trace of a classical pseudo-differential operator which yields
  \begin{align*}
    \tr_\zeta(\Delta_{\Tp})=\trKV\l(-\frac{(\abs{\d_1}-\abs{\d_2})^2}{4}\r)=0
  \end{align*}
  where $(\d_1,\d_2)$ is the gradient on $\rn[2]/{2\pi\zn[2]}$.
  
  More generally, let $p:\ \rn[2]\to\cn$ be a polynomial. An operator $D=(\id\otimes L)\circ\Delta$ with $L(R_{n,m})=p(m,n)$ is called a differential operator. Then, $D$ is non-critical and $\tr_\zeta(D)=0$.
\end{example*}

\section{The $\zeta$-regularized heat-trace}\label{sec:heat}
For a compact Riemannian manifold $M$ without boundary and of even dimension $\dim M\in 2\nn$, the heat-trace, that is, the trace of the heat-semigroup $T$, has a polyhomogeneous expansion in the time parameter near zero which is of the form
\begin{align*}
  \tr T(t)=\frac{\vol(M)}{(4\pi t)^{\frac{\dim M}{2}}}+\frac{\mathrm{total\ curvature}(M)}{3(4\pi)^{\frac{\dim M}{2}}t^{\frac{\dim M}{2}-1}}+\mathrm{higher\ order\ terms}.
\end{align*}
More generally, for $\dim M\in\nn$, the heat-trace has an expansion
\begin{align*}
  \tr T(t)=(4\pi t)^{-\frac{\dim M}{2}}\sum_{k\in\nn_0}t^{\frac{k}{2}}A_k
\end{align*}
for $t\searrow0$ where the $A_k$ are called heat-invariants. These heat-invariants are spectral invariants of Laplace type operators $\nabla^*\nabla+V$ generating the corresponding ``heat-semigroup'' where $\nabla$ is a connection on a vector bundle over $M$ and $V$ is called the potential.

In this section, we want to consider the heat-semigroup $T$ generated by $\Delta_{\Tp}$ on $\Tp$ and compute the heat-coefficient. However, while the heat-semigroup is a semigroup of trace class operators, this is no longer true for the Toeplitz algebra since
\begin{align*}
  \sigma(T(t))=\sigma_p(T(t))=\l\{\exp\l(-t(n-m)^2\r);\ m,n\in\nn_0\r\}
\end{align*}
including multiplicities. In other words, each eigenvalue has multiplicity $\aleph_0$, i.e., $T(t)$ is bounded but not compact. Thus, we need to regularize $\tr T(t)$. However, if we na\"ively expand $\exp\l(-t(n-m)^2\r)=\sum_{k\in\nn_0}\frac{(-t)^k(n-m)^{2k}}{k!}$, we obtain a polyhomogeneous representation which fails to satisfy $\sup_{\iota\in I}\Re(d_{\iota})<\infty$. In other words, we cannot simply apply the theory developed in section~\ref{sec:zeta}. Thus, we define a slightly more general $\zeta$-function.

\begin{definition}
  Let $\Gf=\l(\Gf(z)\r)_{z\in\cn[2]}$ be a holomorphic family of operators on $\Tp$. Then, we call $\Gf$ a generalized gauged polyhomogeneous operator (or generalized gauge) if and only if $\Gf(0)=1$ and $\fa z\in\cn[2]:\ \Gf(z)=\Gf_0(z)+(\id\otimes L_{p,\Gf(z)})\circ\Delta$ where $\Gf_0$ is of trace class for all $z\in\cn[2]_{\Re(\cdot)<R}$ for some $R\in\rn_{>0}$ and 
  \begin{align*}
    \fa z\in\cn[2]\ \fa m,n\in\nn_0:\ L_{p,\Gf(z)}(R_{n,m})=\sum_{\iota\in I}\alpha_\iota(z)\sigma_{d_{1,\iota}+\delta_1z_1}(m)\sigma_{d_{2,\iota}+\delta_2z_2}(n).
  \end{align*}
  Furthermore, let $H$ be a polyhomogeneous operator on $\Tp$. Then, we define $\zeta(H,\Gf)$ to be the maximal holomorphic extension of $z\mapsto\tr(H\Gf(z))$ with open, connected domain containing $\cn[2]_{\Re(\cdot)<r}$ for some $r\in\rn$ sufficiently small. 
\end{definition}

\begin{corollary}
  Let $H=(\id\otimes L_H)\circ\Delta$ and $\Gf=(\id\otimes L_{\Gf})\circ\Delta$ a generalized gauge. Then, 
  \begin{align*}
    \fa m,n\in\nn_0\ \fa z\in\cn[2]:\ H\Gf(z)R_{n,m}=L_H\l(R_{n,m}\r)L_{\Gf}(z)\l(R_{n,m}\r).
  \end{align*}
\end{corollary}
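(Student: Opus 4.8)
The plan is to reduce the claim to the single structural fact, recorded as property~(iii) of $\Tp$ above, that the co-multiplication is diagonal on the linear basis $\{R_{n,m};\ m,n\in\nn_0\}$ of $\Tp_0$, namely $\Delta(R_{n,m})=R_{n,m}\otimes R_{n,m}$. From this, any operator of the form $(\id\otimes\ell)\circ\Delta$ with $\ell:\ \Tp_0\to\cn$ linear acts on the dense $^*$-subalgebra $\Tp_0$ as a (generally unbounded) diagonal operator, each $R_{n,m}$ being an eigenvector with eigenvalue $\ell(R_{n,m})$:
\begin{align*}
  (\id\otimes\ell)\circ\Delta(R_{n,m})=(\id\otimes\ell)(R_{n,m}\otimes R_{n,m})=\ell(R_{n,m})R_{n,m}.
\end{align*}
Since $R_{n,m}\in\Tp_0$ and both $H=(\id\otimes L_H)\circ\Delta$ and $\Gf(z)=(\id\otimes L_{\Gf}(z))\circ\Delta$ are of this shape, it remains only to compose the two diagonal actions.

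Concretely, I would first apply $\Gf(z)$ to get $\Gf(z)R_{n,m}=L_{\Gf}(z)(R_{n,m})R_{n,m}$, which is again a scalar multiple of the basis element $R_{n,m}$ and hence lies in $\Tp_0$. Applying $H$ and using that $H$ is linear so that the scalar $L_{\Gf}(z)(R_{n,m})$ factors out,
\begin{align*}
  H\Gf(z)R_{n,m}=L_{\Gf}(z)(R_{n,m})\,HR_{n,m}=L_{\Gf}(z)(R_{n,m})\,L_H(R_{n,m})\,R_{n,m},
\end{align*}
and commuting the two scalars gives the asserted identity. No induction on word length and no appeal to holomorphy of $z\mapsto\Gf(z)$ is needed here, since we argue directly on the basis inside $\Tp_0$; the possibly-unbounded closures on $\Tp$ play no role.

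There is essentially no obstacle: the statement is a bookkeeping consequence of the group-like behaviour of the $R_{n,m}$ under $\Delta$. The only point worth flagging is the hypothesis that the generalized gauge be presented in the pure form $(\id\otimes L_{\Gf})\circ\Delta$ — equivalently, that its trace-class part $\Gf_0$ is itself diagonal in the $R_{n,m}$-basis, which in the applications holds automatically because $\Gf(z)$ is built from convolution operators $(\id\otimes\omega)\circ\Delta$. For a general trace-class perturbation one would instead only obtain $H\Gf(z)R_{n,m}=H\l(\Gf_0(z)R_{n,m}\r)+L_H(R_{n,m})L_{p,\Gf(z)}(R_{n,m})R_{n,m}$, and the term $H\l(\Gf_0(z)R_{n,m}\r)$ need not be a scalar multiple of $R_{n,m}$, so the simple diagonal identity would fail.
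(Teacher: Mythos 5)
Your proof is correct and follows exactly the argument the paper itself relies on (the same diagonal-action computation it spells out in the proof of the traciality proposition): $\Delta(R_{n,m})=R_{n,m}\otimes R_{n,m}$ makes every operator of the form $(\id\otimes\ell)\circ\Delta$ diagonal on the basis, and composing the two scalar actions gives the identity, with the factor $R_{n,m}$ that is evidently missing from the right-hand side of the stated corollary restored. Your closing remark about the trace-class part $\Gf_0$ is a fair observation, but the corollary's hypothesis already presents $\Gf$ in the pure form $(\id\otimes L_{\Gf})\circ\Delta$, so nothing further is needed.
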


Eventually, we are interested in $\zeta(H,\Gf)(z,z)$ in a neighborhood of $z=0$. However, with the introduction of a second complex parameter, we can compute the limit $\lim_{z\to0}\zeta(H,\Gf)(z,z)$ by computing either $\lim_{z_1\to0}\lim_{z_2\to0}\zeta(H,\Gf)(z_1,z_2)$ or $\lim_{z_2\to0}\lim_{z_1\to0}\zeta(H,\Gf)(z_1,z_2)$ which may be significantly easier. This is possible since the identity theorem holds for holomorphic functions on $\Omega\sse\cn[n]$ in the usual sense, that is, if $\Omega$ is open and connected and a holomorphic function $f$ vanishes in an open subset of $\Omega$, then $f=0$. Thus, the $\zeta$-function in multiple variables is unique. Furthermore, since restricting a generalized gauge to the diagonal yields a gauge again, it suffices to check gauge independence of the lowest order Laurent coefficient at zero using gauges parametrized on $\cn$.

\begin{lemma}\label{lemma-gauge-independence}
  Let $A$ be an operator on $\Tp$ and $\Gf$ and $\Hf$ gauged polyhomogeneous operators with $\Gf(0)=\Hf(0)$, $L_{\Gf(z)}\l(R_{n,m}\r)=\sigma_{\delta z}(m,n)$, and $L_{\Hf(z)}\l(R_{n,m}\r)=\tilde\sigma_{\delta z}(m,n)$. Then, the lowest order Laurent coefficient of $\zeta(A,\Gf)$ and $\zeta(A,\Hf)$ in zero coincide, i.e., depend only on $A$.
\end{lemma}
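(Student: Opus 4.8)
The plan is to reduce the statement to the one-parameter gauge-independence result that underlies Theorem~\ref{thm:existence-zeta-function} and its corollary. Concretely, set $F := \zeta(A,\Gf)$ and $G := \zeta(A,\Hf)$; these are holomorphic functions on a punctured neighborhood of $0\in\cn$ with at most a simple pole at $0$ (by Theorem~\ref{thm:existence-zeta-function} applied to the gauged polyhomogeneous operators $A\Gf$ and $A\Hf$, which have the same index-set data shifted by $\delta z$). The lowest order Laurent coefficient in question is $c_{-1}(F,0)/\delta$ in the critical case and $c_0(F,0)$ in the non-critical case, and I want to show this equals the corresponding coefficient for $G$. The key observation is that $F$ and $G$ are the meromorphic continuations of $z\mapsto\tr\bigl(A\Gf(z)\bigr)$ and $z\mapsto\tr\bigl(A\Hf(z)\bigr)$, and by the diagonal-action corollary these traces are $\sum_{m,n}L_A(R_{n,m})\,\sigma_{\delta z}(m,n)$ and $\sum_{m,n}L_A(R_{n,m})\,\tilde\sigma_{\delta z}(m,n)$ respectively, wherever the sums converge.

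First I would transport the problem to the classical torus, exactly as in the proof of Theorem~\ref{thm:existence-zeta-function}: the operator $A\Gf(z)$ has the same spectrum (with multiplicities) as the classical $\psi$do $P_\Gf(z) := \sum_\iota \alpha_\iota^A\,\sigma_{d_\iota+\delta z}\bigl(\tfrac{|\nabla|}{2}\bigr)$ on $\rn[2]/2\pi\zn[2]$, where the $d_\iota$ and $\alpha^A_\iota$ come from the polyhomogeneous data of $A$, and likewise $A\Hf(z)$ corresponds to the $\psi$do $P_\Hf(z)$ built from the symbol $\tilde\sigma$. Now $P_\Gf(z)$ and $P_\Hf(z)$ are classical gauged $\psi$dos of the same affine order with the \emph{same} value at $z=0$ (since $\sigma_0 = \tilde\sigma_0$ forces $P_\Gf(0)=P_\Hf(0)$, both equal to the classical operator associated to $A$). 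The classical Kontsevich--Vishik/residue theory — invoked via "the known pseudo-differential theory" as in Theorem~\ref{thm:existence-zeta-function} — tells us that the residue at $0$, divided by the gauge scaling $\delta$, and the constant term in the non-critical case, depend only on the $z=0$ operator and not on the choice of holomorphic gauging. Transporting this back gives that the lowest order Laurent coefficient of $\zeta(A,\Gf)$ and $\zeta(A,\Hf)$ at $0$ agree, and depend only on $A$.

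The main obstacle, and the step that needs genuine care, is making the reduction to the torus precise when $A$ itself is merely polyhomogeneous (not part of a gauged family) and when the product $A\Gf(z)$ need not be trace class for $z$ near $0$ — one has to check that the symbol $\sum_\iota\alpha_\iota^A\,\sigma_{d_\iota+\delta z}$ really is a classical (poly-log-homogeneous) symbol in the torus sense for which the cited $\psi$do machinery applies, including the case where the degrees $d_\iota$ accumulate or coincide with $-2$. One should also verify that the spectral identification is faithful enough that equality of $\zeta$-functions on the half-plane $\cn_{\Re(\cdot)<(-2-\sup\Re d_\iota)/\delta}$ (where both sides converge absolutely and agree with the classical zeta functions) propagates by the identity theorem to equality of the full meromorphic continuations near $0$, hence of all Laurent coefficients. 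Once those bookkeeping points are settled, gauge-independence is immediate from the classical result.
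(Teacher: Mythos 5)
Your argument is essentially correct, but it takes a genuinely different route from the paper. The paper's proof is a two-line, purely function-theoretic argument: since $\Gf(0)=\Hf(0)$, the difference quotient $\If(z):=\frac{\Gf(z)-\Hf(z)}{z}$ is again a gauged polyhomogeneous operator, so $\zeta(A,\Gf)-\zeta(A,\Hf)=z\,\zeta(A,\If)$ has its Laurent expansion at $0$ starting one order higher than the lowest order $l$ common to $\zeta(A,\Gf)$ and $\zeta(A,\Hf)$; hence the order-$l$ coefficients cancel, i.e.\ coincide. This argument works uniformly in the critical and non-critical cases, never needs to identify the lowest coefficient with a Wodzicki residue or Kontsevich--Vishik trace, and stays entirely on the Toeplitz side. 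Your route instead transports $A\Gf(z)$ and $A\Hf(z)$ to gauged $\psi$do families on $\rn[2]/{2\pi\zn[2]}$ and invokes the classical gauge-independence of the residue and of the finite part at $0$. That is legitimate and consistent with how Theorem~\ref{thm:existence-zeta-function} is proved in the paper, and you correctly flag the bookkeeping it requires (faithfulness of the spectral identification on $\nn_0^2$ versus $\zn[2]$, infinite index sets with $\ell_1$ coefficients, propagation of the identity from the half-plane of trace-class convergence). The trade-off is that the classical theorem you cite --- ``any two holomorphic gauges agreeing at $z=0$ give the same residue and, non-critically, the same finite part'' --- is itself proved by exactly the difference-quotient argument above, so your detour through the torus imports heavier machinery to establish something that can be read off directly from the Laurent expansions; on the other hand, your version has the small bonus of identifying \emph{what} the gauge-independent quantity is (a residue or a KV trace of the associated classical operator), which the paper's proof does not provide.
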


\begin{proof}
  Let $l$ be the order of the lowest order Laurent coefficient of $\zeta(A,\Gf)$ and $\zeta(A,\Hf)$. Then, $\If(z):=\frac{\Gf(z)-\Hf(z)}{z}$ is also a gauged polyhomogeneous operator and the order of the lowest order Laurent coefficient of $\zeta(A,\If)$ is $l$. In particular, $z\mapsto z^l\zeta(A,\If)(z)=z^{l-1}\l(\zeta(A,\Gf)(z)-\zeta(A,\Hf)(z)\r)$ is holomorphic in zero, i.e., the lowest order Laurent coefficients of $\zeta(A,\Gf)$ and $\zeta(A,\Hf)$ must coincide.
\end{proof}

Thus, we can compute the leading order coefficient of the $\zeta$-regularized heat-trace on $\Tp$.

\begin{theorem}\label{thm:heat-trace-toeplitz}
  Let $\fa m,n\in\nn_0\ \fa z\in\cn[2]:\ L_{\Gf}(z)\l(R_{n,m}\r)=\sum_{\iota\in I}\alpha_\iota(z)n^{\delta_{1,\iota}z_1}m^{\delta_{2,\iota}z_2}$ with finite $I$, $\Gf=(\id\otimes L_{\Gf})\circ\Delta$ such that $\Gf(0)=1$, and $T$ the heat-semigroup on $\Tp$ (generated by $\Delta_{\Tp}$). Then, the following assertions are true.
  \begin{enumerate}
  \item[(i)] $\max\l\{\frac{\Re(z_1)}{\delta_{1,\iota}},\frac{\Re(z_2)}{\delta_{2,\iota}};\ \iota\in I\r\}<-1\ \then\ \fa t\in\rn_{\ge0}:\ T(t)\Gf(z_1,z_2)$ is of trace class.
  \item[(ii)] For all $t\in\rn_{>0}$, we obtain
    \begin{align*}
      \lim_{z_2\to0}\lim_{z_1\to0}\zeta(T(t),\Gf)(z_1,z_2)=-\frac{1}{2}-\sum_{k\in\nn}(k+1)e^{-tk^2}
    \end{align*}
    and
    \begin{align*}
      \Htr_{\Tp,\zeta,\Gf}(t)=\frac{1}{2}-\sum_{k\in\nn}(k-1)e^{-tk^2}.
    \end{align*}
  \item[(iii)] $\lim_{t\searrow0}\ 4\pi t\ \Htr_{\Tp,\zeta,\Gf}(t)=-2\pi$.
  \end{enumerate}
\end{theorem}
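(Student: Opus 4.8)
The plan is to establish the three assertions in order, since each builds on the previous. For part (i), I would use the Corollary immediately preceding the theorem, which tells us that $H\Gf(z)$ acts diagonally on the basis $R_{n,m}$ with eigenvalue $L_H(R_{n,m})L_{\Gf}(z)(R_{n,m})$. Here $H = T(t)$, so the eigenvalue on $R_{n,m}$ is $e^{-t(n-m)^2}\sum_{\iota\in I}\alpha_\iota(z)n^{\delta_{1,\iota}z_1}m^{\delta_{2,\iota}z_2}$. Summing over $m,n\in\nn_0$ (with the understanding that $0^0 = 1$ and $0^w = 0$ for $\Re(w) > 0$), trace class follows from absolute summability: the factor $e^{-t(n-m)^2}$ decays rapidly away from the diagonal $n = m$, while along and near the diagonal the polynomial factors $n^{\delta_{1,\iota}z_1}m^{\delta_{2,\iota}z_2}$ must be summable, which is exactly the condition $\max_\iota\{\Re(z_1)/\delta_{1,\iota}, \Re(z_2)/\delta_{2,\iota}\} < -1$ after the standard reindexing $n = m + k$, $k\in\zn$. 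I would make this precise by splitting the double sum into $k = n - m$ fixed and summing over $m$.

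For part (ii), I would compute the iterated limit by first summing over one index. Writing $n = m + k$ and summing the eigenvalues, one gets something like $\sum_{k\in\zn}e^{-tk^2}\sum_{m}\alpha_\iota(z)(m+k)^{\delta_{1,\iota}z_1}m^{\delta_{2,\iota}z_2}$ (restricting to $m+k\ge 0$). The key point is that after taking $z_1 \to 0$, the factor $(m+k)^{\delta_{1,\iota}z_1} \to 1$ for $m + k \ge 1$, but one must be careful about the boundary term $m + k = 0$, i.e., $m = -k$ for $k \le 0$, where $n^{\delta_{1,\iota}z_1} = 0$ for $\Re(z_1) > 0$ but $= 1$ in the limit — this discrepancy is precisely what contributes the extra constants. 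After the first limit we are left with a sum of the form $\sum_{m}\alpha_\iota(0,z_2)m^{\delta_{2,\iota}z_2}$ weighted by heat factors, which is essentially $\alpha_\iota(0,z_2)$ times (shifted) Riemann zeta values; the meromorphic continuation in $z_2$ and evaluation at $z_2 = 0$ uses $\zeta_R(0) = -1/2$. Assembling the pieces, using $\Gf(0) = 1$ to fix the relation among the $\alpha_\iota$, should yield the two stated closed forms: $-\frac12 - \sum_{k\in\nn}(k+1)e^{-tk^2}$ for the iterated limit and $\frac12 - \sum_{k\in\nn}(k-1)e^{-tk^2}$ for $\Htr_{\Tp,\zeta,\Gf}(t)$ (the difference of $1$ between them and the sign flips on the $k$-linear part reflecting the two boundary contributions at $m+k=0$ and $m=0$ and the symmetrization $\lim_{z\to 0}\zeta(T(t),\Gf)(z,z)$, which by Lemma~\ref{lemma-gauge-independence} is well-defined and equals the common value of the iterated limits up to the diagonal-counting correction).

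For part (iii), I would simply analyze the small-$t$ asymptotics of the closed form from (ii). We have $4\pi t\,\Htr_{\Tp,\zeta,\Gf}(t) = 4\pi t\l(\frac12 - \sum_{k\in\nn}(k-1)e^{-tk^2}\r)$. The term $4\pi t \cdot \frac12 \to 0$. For the sum, $\sum_{k\in\nn}(k-1)e^{-tk^2} = \sum_{k\in\nn}k e^{-tk^2} - \sum_{k\in\nn}e^{-tk^2}$; by comparison with the integral $\int_0^\infty x e^{-tx^2}\,dx = \frac{1}{2t}$ the first sum is asymptotically $\frac{1}{2t}$ and the second is $O(t^{-1/2})$, so $t\sum_{k\in\nn}(k-1)e^{-tk^2} \to \frac12$, giving $4\pi t\,\Htr_{\Tp,\zeta,\Gf}(t) \to 4\pi\cdot(-\frac12) = -2\pi$. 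I would make the integral comparison rigorous via Euler–Maclaurin or a straightforward squeeze, noting $\sum_{k\ge 1}k e^{-tk^2} = \frac{1}{2t} + O(1)$ as $t\searrow 0$.

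The main obstacle I anticipate is part (ii): correctly tracking the boundary terms at $m = 0$ and at $n = 0$ (equivalently $m + k = 0$) that survive the limit $z_j \to 0$ with the opposite value from the generic terms. These are what distinguish $\zeta$-regularized counting from naive counting and account for both the constant $\pm\frac12$ (from $\zeta_R(0) = -\frac12$) and the shift in the linear coefficient of $k$. Getting the exact combinatorics of "which index is being continued first" and keeping the bookkeeping of $\Gf(0) = 1$ consistent is the delicate part; the rest is routine analysis of heat sums.
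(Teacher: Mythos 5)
Your proposal is correct and follows essentially the same route as the paper: reindex by $k=n-m$, split off the $k\le 0$ range, take the iterated limits using $\zeta_R(-\delta z)\to\zeta_R(0)=-\tfrac12$ (with the truncated sums $\sum_{m=1}^{k}m^{\delta z_2}\to k$ producing the linear-in-$k$ terms), restore the $m=0$ or $n=0$ boundary terms — which the gauge annihilates — to pass from $\zeta(T(t),\Gf)(0)$ to $\Htr_{\Tp,\zeta,\Gf}(t)$, and finish with an integral comparison for the $t\searrow0$ asymptotics. The only quibble is in (iii): the error in $\sum_{k\ge1}ke^{-tk^2}=\tfrac{1}{2t}+O(t^{-1/2})$ is $O(t^{-1/2})$ rather than $O(1)$ as you wrote, but this is harmless since $t\cdot O(t^{-1/2})\to0$; the paper carries out the same comparison more explicitly via a two-sided squeeze about the maximum of $x\mapsto(x+1)e^{-tx^2}$.
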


\begin{proof}
  ``(i)'' The assertion follows directly from the fact that $I$ is finite and each $\abs{L_{T(t)}\l(R_{n,m}\r)}\le1$.

  \paragraph{} ``(ii)'' Setting $k:=n-m$ we observe for $\Re(z_1)$ and $\Re(z_2)$ sufficiently small
  \begin{align*}
    \zeta(T(t),\Gf)(z)=&\sum_{m\in\nn}\sum_{n\in\nn}\sum_{\iota\in I}\alpha_\iota(z)e^{-t(n-m)^2}n^{\delta_{1,\iota}z_1}m^{\delta_{2,\iota}z_2}\\
    =&\sum_{m\in\nn}\sum_{k\in\zn_{>-m}}\sum_{\iota\in I}\alpha_\iota(z)e^{-tk^2}(m+k)^{\delta_{1,\iota}z_1}m^{\delta_{2,\iota}z_2}\\
    =&\sum_{m\in\nn}\l(\sum_{k=1-m}^0+\sum_{k\in\nn}\r)\sum_{\iota\in I}\alpha_\iota(z)e^{-tk^2}(m+k)^{\delta_{1,\iota}z_1}m^{\delta_{2,\iota}z_2}.
  \end{align*}
  Let us consider the $\sum_{k=1-m}^0$ case first. To assist readability, we will notationally suppress $\sum_{\iota\in I}\alpha_\iota(z)$ since $I$ is finite. Then,
  \begin{align*}
    \sum_{m\in\nn}\sum_{k=1-m}^0e^{-tk^2}(m+k)^{\delta_{1,\iota}z_1}m^{\delta_{2,\iota}z_2}=&\sum_{m\in\nn}\sum_{k=0}^{m-1}e^{-tk^2}(m-k)^{\delta_{1,\iota}z_1}m^{\delta_{2,\iota}z_2}\\
    =&\sum_{k\in\nn_0}e^{-tk^2}\sum_{m\in\nn_{>k}}(m-k)^{\delta_{1,\iota}z_1}m^{\delta_{2,\iota}z_2}
  \end{align*}
  is holomorphic in $z_1$ in a neighborhood of $0$ given $\Re(z_2)<-\frac{1}{\delta_{2,\iota}}$ and the limit $z_1\to0$ yields
  \begin{align*}
    \lim_{z_1\to0}\sum_{m\in\nn}\sum_{k=1-m}^0e^{-tk^2}(m+k)^{\delta_{1,\iota}z_1}m^{\delta_{2,\iota}z_2}=&\sum_{k\in\nn_0}e^{-tk^2}\sum_{m\in\nn_{>k}}m^{\delta_{2,\iota}z_2}\\
    =&\sum_{k\in\nn_0}e^{-tk^2}\l(\zeta_R(-\delta_{2,\iota}z_2)-\sum_{m=1}^km^{\delta_{2,\iota}z_2}\r)
  \end{align*}
  which itself has a holomorphic extension in $z_2$ to a neighborhood of $0$, i.e.,
  \begin{align*}
    \begin{aligned}
      \lim_{z_2\to0}\lim_{z_1\to0}\sum_{m\in\nn}\sum_{k=1-m}^0e^{-tk^2}(m+k)^{\delta_{1,\iota}z_1}m^{\delta_{2,\iota}z_2}=&\sum_{k\in\nn_0}e^{-tk^2}\l(\zeta_R(0)-k\r)\\
      =&-\sum_{k\in\nn_0}e^{-tk^2}\frac{2k+1}{2}.
    \end{aligned}
  \end{align*}
  Considering the $\sum_{k\in\nn}$ term, we still have holomorphy in $z_1$ in a neighborhood of $0$ given $\Re(z_2)<-\frac{1}{\delta_{2,\iota}}$ and, thus,
  \begin{align*}
    \lim_{z_2\to0}\lim_{z_1\to0}\sum_{k\in\nn}\sum_{m\in\nn}e^{-tk^2}(m+k)^{\delta_{1,\iota}z_1}m^{\delta_{2,\iota}z_2}=&\lim_{z_2\to0}\sum_{k\in\nn}\sum_{m\in\nn}e^{-tk^2}m^{\delta_{2,\iota}z_2}\\
    =&\lim_{z_2\to0}\sum_{k\in\nn}e^{-tk^2}\zeta_R(-\delta_{2,\iota}z_2)\\
    =&-\frac{1}{2}\sum_{k\in\nn}e^{-tk^2}.
  \end{align*}
  Hence,
  \begin{align*}
    \lim_{z_2\to0}\lim_{z_1\to0}\zeta(T(t),\Gf)(z_1,z_2)=-\frac{1}{2}-\sum_{k\in\nn}(k+1)e^{-tk^2}
  \end{align*}
  since $\sum_{\iota\in I}\alpha_\iota(0)=1$.

  Since gauging terms with $m=0$ or $n=0$ yields the constant function $0$, we need to add these terms again for the heat-trace, i.e.,
  \begin{align*}
    \Htr_{\Tp,\zeta,\Gf}(t)=-\frac{1}{2}-\sum_{k\in\nn}(k+1)e^{-tk^2}+1+2\sum_{k\in\nn}e^{-tk^2}=\frac{1}{2}-\sum_{k\in\nn}(k-1)e^{-tk^2}.
  \end{align*}

  \paragraph{} ``(iii)'' It suffices to consider the $-\sum_{k\in\nn}(k+1)e^{-tk^2}$ term since the remainder is a ``classical'' heat-trace on a $1$-dimensional compact manifold without boundary and, thus, has asymptotics proportional to $t^{-\frac12}$ for $t\searrow0$. We can estimate the series using the integral comparison test since $\rn_{>0}\ni x\mapsto(x+1)e^{-tx^2}\in\rn$ is increasing on $[0,K_t]$ and decreasing on $\rn_{\ge K_t}$ where $K_t:=\frac12\sqrt{\frac{2}{t}+1}-\frac12$.

  On $[0,K_t]$, we obtain
  \begin{align*}
    \sum_{k=1}^{\lfloor K_t\rfloor-1}(k+1)e^{-tk^2}\le\int_1^{\lfloor K_t\rfloor}(x+1)e^{-tx^2}dx\le\sum_{k=2}^{\lfloor K_t\rfloor}(k+1)e^{-tk^2}
  \end{align*}
  and
  \begin{align*}
    \kappa_t:=\int_1^{\lfloor K_t\rfloor}(x+1)e^{-tx^2}dx=&\frac{\sqrt{\pi t}\erf(\sqrt t\lfloor K_t\rfloor)-e^{-t\lfloor K_t\rfloor^2}}{2t}-\frac{\sqrt{\pi t}\erf(\sqrt t)-e^{-t}}{2t}
  \end{align*}
  where $\erf$ denotes the error function (with range $[-1,1]$). Then, we obtain
  \begin{align*}
    \sum_{k=1}^{\lfloor K_t\rfloor}(k+1)e^{-tk^2}\in\l[\kappa_t+2e^{-t},\kappa_t+(\lfloor K_t\rfloor+1)e^{-t\lfloor K_t\rfloor^2}\r].
  \end{align*}
  Similarly, on $\rn_{\ge K_t}$, we obtain
  \begin{align*}
    \sum_{k\in\nn_{\ge \lfloor K_t\rfloor+2}}(k+1)e^{-tk^2}\le\int_{\rn_{\ge \lfloor K_t\rfloor+1}}(x+1)e^{-tx^2}dx\le\sum_{k\in\nn_{\ge \lfloor K_t\rfloor+1}}(k+1)e^{-tk^2}
  \end{align*}
  and
  \begin{align*}
    \lambda_t:=\int_{\rn_{\ge \lfloor K_t\rfloor+1}}(x+1)e^{-tx^2}dx=&\frac{\sqrt{\pi}}{2\sqrt t}-\frac{\sqrt{\pi t}\erf(\sqrt t(\lfloor K_t\rfloor+1))-e^{-t(\lfloor K_t\rfloor+1)^2}}{2t}
  \end{align*}
  which yields
  \begin{align*}
    \sum_{k\in\nn_{\ge \lfloor K_t\rfloor+1}}(k+1)e^{-tk^2}\in\l[\lambda_t,\lambda_t+(\lfloor K_t\rfloor+2)e^{-t(\lfloor K_t\rfloor+1)^2}\r].
  \end{align*}
  Hence,
  \begin{align*}
    \sum_{k\in\nn}(k+1)e^{-tk^2}-\kappa_t-\lambda_t\in\l[2e^{-t},(\lfloor K_t\rfloor+1)e^{-t\lfloor K_t\rfloor^2}+(\lfloor K_t\rfloor+2)e^{-t(\lfloor K_t\rfloor+1)^2}\r].
  \end{align*}

  In order to compute $\lim_{t\searrow0}t\sum_{k\in\nn}(k+1)e^{-tk^2}$, we need to study $\lim_{t\searrow0}\sqrt t\lfloor K_t\rfloor$ first. Let $\lfloor K_t\rfloor=n$. Then,
  \begin{align*}
    n\le K_t=\frac12\sqrt{\frac{2}{t}+1}-\frac12<n+1
  \end{align*}
  implies
  \begin{align*}
    \frac{2}{(2n+3)^2-1}<t\le\frac{2}{(2n+1)^2-1}.
  \end{align*}
  In other words,
  \begin{align*}
    \lfloor K_t\rfloor=n\quad \then\quad\sqrt t\lfloor K_t\rfloor\in\l(\frac{n\sqrt2}{\sqrt{(2n+3)^2-1}},\frac{(n+1)\sqrt2}{\sqrt{(2n+1)^2-1}}\r]
  \end{align*}
  which yields
  \begin{align*}
    \lim_{t\searrow0}\sqrt t\lfloor K_t\rfloor\in\l[\lim_{n\to\infty}\frac{\sqrt2}{\sqrt{\frac{(2n+3)^2-1}{n^2}}},\lim_{n\to\infty}\frac{\sqrt2}{\sqrt{\frac{(2n+1)^2-1}{(n+1)^2}}}\r]=\{1\}.
  \end{align*}
  Finally, this implies
  \begin{align*}
    \lim_{t\searrow0}t\kappa_t=&\lim_{t\searrow0}\frac{\sqrt{\pi t}\erf(\sqrt t\lfloor K_t\rfloor)-e^{-t\lfloor K_t\rfloor^2}}{2}-\frac{\sqrt{\pi t}\erf(\sqrt t)-e^{-t}}{2}=\frac{1-e^{-1}}{2}\\
    \lim_{t\searrow0}t\lambda_t=&\lim_{t\searrow0}\frac{\sqrt{\pi t}}{2}-\frac{\sqrt{\pi t}\erf(\sqrt t(\lfloor K_t\rfloor+1))-e^{-t(\lfloor K_t\rfloor+1)^2}}{2}=\frac{e^{-1}}{2},
  \end{align*}
  i.e.,
  \begin{align*}
    \lim_{t\searrow0}4\pi t\ \Htr_{\Tp,\zeta,\Gf}(t)=\lim_{t\searrow0}-2\pi t-4\pi t\sum_{k\in\nn}(k+1)e^{-tk^2}=-4\pi\lim_{t\searrow0}t\kappa_t+t\lambda_t=-2\pi.
  \end{align*}
\end{proof}

\begin{remark*}
  This result is very interesting as well, since it says the ``heat-invariants'' and ``criticality'' in section~\ref{sec:zeta} indicate that $\Tp$ is a ``quantum manifold'' of dimension $2$ with ``volume'' $-2\pi$. This stands in stark contrast to the classical background. The Brownian motion on $\Tp$ is induced by Brownian motion on the circle $\rn/{2\pi\zn}$.  Yet, we do not observe ``criticality'' $-1$ and $\lim_{t\searrow0}\sqrt{4\pi t}\Htr_{\Tp,\zeta,\Gf}(t)\in\rn\setminus\{0\}$ which would be expected if Brownian motion in $\Tp$ inherited the properties of Brownian motion in $\rn/{2\pi\zn}$. 

  However, there is a way of making sense of these results. $\Tp_0$ can be seen as the semigroup algebra of the bicyclic semigroup and $\Tp$ as the universal inverse semigroup C*-algebra of the bicyclic semigroup. In a sense we can view $\Tp$ as the ``generalized Pontryagin type dual'' of the bicyclic semigroup which would give rise to the dimension $2$.
\end{remark*}

\section{The discrete Heisenberg group algebras}\label{sec:heisenberg}
In this section, we want to consider the $\zeta$-regularized trace and $\zeta$-heat-trace on the group algebra generated by the discrete Heisenberg groups.

\begin{definition}\label{def:heisenberg}
  Let $N\in\nn$ and $P_j$, $Q_j$ ($j\in\nn_{\le N}$), and $Z$ be unitaries satisfying $P_iZ=ZP_i$, $Q_jZ=ZQ_j$, $P_iP_j=P_jP_i$, $Q_iQ_j=Q_jQ_i$, $P_iQ_j=Q_jP_i$ for $i\ne j$, and $P_iQ_i=ZQ_iP_i$.
  
  The discrete Heisenberg group algebra $\Hn_N$ of dimension $2N+1$ is the universal C*-algebra generated by $\l\{R_{m,n,p}:=P^mQ^nZ^p;\ m,n\in\zn[N], p\in\zn\r\}$. $\Hn_N$ carries the structure of a C*-bialgebra by extending the maps $\Delta(R_{m,n,p})=R_{m,n,p}\otimes R_{m,n,p}$ and $\eps(R_{m,n,p})=1$.
\end{definition}

\begin{prop}
Let $H$ be a Hilbert space, $\eta_{P_j}, \eta_{Q_j}$ be elements of $H$ such that
$$\eta_P:=(\langle\eta_{P_i},\eta_{P_j} \rangle)_{1\leq i,j\leq N}\in M_N(\rn),\quad \eta_Q:=(\langle\eta_{Q_i},\eta_{Q_j} \rangle)_{1\leq i,j\leq N}\in M_N(\rn)$$
and $\Im(\langle\eta_{P_i},\eta_{Q_j} \rangle)$ is constant over $i,j\in \{1,\dots N\}$ and let $\lambda_{P_j}$ and $\lambda_{Q_j}$ be real numbers then there exists a unique Gaussian Sch\" urmann triple on the discrete Heisenberg group algebra such that
\begin{align*}
\eta(P_j)&=\eta_{P_j},&\eta(Q_j)&=\eta_{Q_j},\\
L(P_j-P_j)&=2i \lambda_{P_j} & L(Q_j-Q_j)&=2i \lambda_{Q_j}.
\end{align*}
Furthermore every Gaussian Sch\"urmann triple on the discrete Heisenberg group algebra arises this way.
\end{prop}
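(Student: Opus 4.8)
The plan is to mirror the structure of the proof of Theorem~\ref{thm:isolp}: first extract the data $\eta_{P_j},\eta_{Q_j},\lambda_{P_j},\lambda_{Q_j}$ from an arbitrary Gaussian Sch\"urmann triple, verifying that these satisfy the stated reality and constancy constraints; then, conversely, build a triple from such data and check it is well-defined. Since the triple is required to be Gaussian, we have $\rho=\eps$, so the cocycle property collapses to $\eta(ab)=\eta(a)\eps(b)+\eps(a)\eta(b)$, i.e.\ $\eta$ is a group-like-to-additive homomorphism on the generating set: $\eta(R_{m,n,p})=\sum_j m_j\eta_{P_j}+\sum_j n_j\eta_{Q_j}+p\,\eta_Z$, and one shows $\eta_Z=0$ from the relation $P_iQ_i=ZQ_iP_i$ applied to $\eta$ together with unitarity ($\eta(u^*)=-\eta(u)$ for unitary $u$ when $\rho=\eps$). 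Reality of $\eta_P$ and $\eta_Q$ as Gram-type matrices is automatic; the key new feature is the \emph{constancy} of $\Im\langle\eta_{P_i},\eta_{Q_j}\rangle$ in $i,j$, which I expect to fall out of the commutation relations $P_iQ_j=Q_jP_i$ for $i\ne j$ versus $P_iQ_i=ZQ_iP_i$ for $i=j$: expanding $L$ on both sides of these relations using the Sch\"urmann identity $L(ab)=L(a)\eps(b)+\eps(a)L(b)+\langle\eta(a^*),\eta(b)\rangle$ forces a relation among the inner products $\langle\eta_{P_i},\eta_{Q_j}\rangle$, and comparing the $i\ne j$ case with the $i=j$ case (which picks up the extra $Z$) pins down the imaginary parts to a common value determined by $L$ on $Z$.

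For the converse, given the data I would define $\rho:=\eps$ on the universal C*-algebra $\Hn_N$ (this is a legitimate unital $*$-homomorphism), set $\eta$ on generators by $\eta(P_j)=\eta_{P_j}$, $\eta(Q_j)=\eta_{Q_j}$, $\eta(Z)=0$, extend by the cocycle rule, and define $L$ on generators by $L(P_j-P_j^*)=2i\lambda_{P_j}$, $L(Q_j-Q_j^*)=2i\lambda_{Q_j}$ together with $L(P_j+P_j^*)=-\langle\eta_{P_j},\eta_{P_j}\rangle$, $L(Q_j+Q_j^*)=-\langle\eta_{Q_j},\eta_{Q_j}\rangle$, $L(Z-Z^*)$ equal to $2i$ times the common value of $\Im\langle\eta_{P_i},\eta_{Q_j}\rangle$, $L(Z+Z^*)=0$, extending by the $L$-identity. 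The work is to show this is consistent, i.e.\ that the values assigned respect all the defining relations of $\Hn_N$. Concretely one checks $\eta(P_jP_j^{-1})=0$, $\eta(Q_jQ_j^{-1})=0$, $\eta(ZZ^{-1})=0$ (forcing the $\eta(u^*)=-\eta(u)$ choices), and then that $\eta$ and $L$ agree on the two sides of each relation $P_iP_j=P_jP_i$, $Q_iQ_j=Q_jQ_i$, $P_iZ=ZP_i$, $Q_jZ=ZQ_j$, $P_iQ_j=Q_jP_i$ ($i\ne j$), $P_iQ_i=ZQ_iP_i$; for the last one the extra $\langle\eta_{P_i},\eta_{Q_i}\rangle$ on one side must be matched by the contribution of $Z$, which is exactly why $L(Z-Z^*)$ had to be the common imaginary-part value. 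Because $\Hn_N$ is the \emph{universal} C*-algebra on these generators and relations, verifying consistency on the relations is enough to get well-defined $\eta$ on a pre-Hilbert space and $L$ on a dense $*$-subalgebra; positivity/hermiticity of $L$ as a generating functional follows from the Sch\"urmann-triple form automatically.

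The main obstacle is bookkeeping the interplay between the twisted relation $P_iQ_i=ZQ_iP_i$ and the untwisted $P_iQ_j=Q_jP_i$: these are what both \emph{force} the constancy hypothesis in the necessity direction and \emph{consume} the available freedom in $L(Z)$ in the sufficiency direction, and getting the signs and the factor of $2$ (from $u+u^*$ versus $u-u^*$) to line up across the noncommutative word expansions is the delicate part. A secondary technical point is justifying that $\eta$ extends consistently past the generators to all of $\Tp_0$-analogue $\opn{lin}\{R_{m,n,p}\}$ — this is the same induction-on-word-length argument used after Theorem~\ref{thm:isolp}, using that every relation is homogeneous so the cocycle identity closes up. Uniqueness is then immediate: any Gaussian triple with the prescribed values on generators must, by the cocycle and $L$-identities and $\rho=\eps$, agree with the constructed one on all of $\opn{lin}\{R_{m,n,p}\}$, hence (by the Belton--Wills-type extension already invoked for $\Tp$, or directly by universality) on $\Hn_N$.
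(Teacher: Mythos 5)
Your proposal follows essentially the same route as the paper: the proof there is a near-verbatim adaptation of Theorem~\ref{thm:isolp}, with $\rho=\eps$ collapsing the cocycle to an additive map on the group-like generators, $\eta(Z)=0$ forced by $P_iQ_i=ZQ_iP_i$, and the constraint on $\Im\langle\eta_{P_i},\eta_{Q_j}\rangle$ extracted by expanding $L$ across the commutation relations — exactly your plan, including the converse construction by universality and the uniqueness from the values on generators. One step is wrong as stated, though: the reality of the matrices $\eta_P$ and $\eta_Q$ is \emph{not} ``automatic'' for ``Gram-type matrices.'' A Gram matrix is Hermitian and positive semidefinite, but its off-diagonal entries $\langle\eta_{P_i},\eta_{P_j}\rangle$ with $i\ne j$ are in general complex. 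In the necessity direction the reality is itself a consequence of the relations $P_iP_j=P_jP_i$ and $Q_iQ_j=Q_jQ_i$: applying the Sch\"urmann identity for $L$ to both sides and using $\eta(P_i^*)=-\eta(P_i)$ gives $\langle\eta_{P_i},\eta_{P_j}\rangle=\langle\eta_{P_j},\eta_{P_i}\rangle=\overline{\langle\eta_{P_i},\eta_{P_j}\rangle}$ — the same $L$-expansion you already deploy for the $P_iQ_j$ relations, and precisely what the paper means by ``the commutativity conditions and the product rule on $L$ give the real constraint.'' (Correspondingly, in the sufficiency direction the reality hypothesis is what makes your consistency check on $P_iP_j=P_jP_i$ and $Q_iQ_j=Q_jQ_i$ go through; it is not a freebie.) With that step repaired, the remainder — the $i=j$ versus $i\ne j$ dichotomy pinning $L(Z)$ to $\langle\eta_{P_i},\eta_{Q_j}\rangle-\langle\eta_{Q_j},\eta_{P_i}\rangle$, and the word-length induction for well-definedness — matches the paper's argument.
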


\begin{proof}
The method of proof is identical to that of Theorem \ref{thm:isolp}.

Given a Gaussian Sch\"urmann triple on $\Hn_N$ the commutativity conditions and the product rule on $L$ give the real constraint on the matrices $\eta_P$ and $\eta_Q$. Note that for Gaussian cocycles the relation $PQ=ZQP$ implies that $\eta(Z)=0$.

The constant imaginary constraint, $\Im\langle \eta_{P_i},\eta_{Q_j}\rangle$, is a result of the product rule on $L$ and the identity $L(P_{i}Q_{j})=L(ZQ_{j}P_{i})$ which implies that  $L(Z)=\langle \eta_{P_i},\eta_{Q_j}\rangle-\langle \eta_{Q_j},\eta_{P_i}\rangle$ for all $i,j$.
\end{proof}

\begin{corollary}
Every Gaussian generating functional on the discrete Heisenberg group algebra is of the form 
$$L(P^mQ^nZ^p)=i(m\cdot\lambda_P+n\cdot \lambda_Q+p \lambda_Z)-\frac{1}{2}\begin{pmatrix}m&n\end{pmatrix}\begin{pmatrix}\eta_P &\eta_{PQ}\\ \eta_{PQ}^t& \eta_Q\end{pmatrix}\begin{pmatrix}m\\n\end{pmatrix}$$
for all $m,n\in \zn[N]$ and $p\in \zn$ where $\eta_{PQ}:=(\langle\eta_{P_i},\eta_{Q_j}\rangle)_{1\leq i,j\leq N}$.
\end{corollary}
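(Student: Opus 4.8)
The plan is to read the formula off the preceding Proposition together with Sch\"urmann's one-to-one correspondence between generating functionals and Sch\"urmann triples. Every Gaussian generating functional $L$ on $\Hn_N$ is the third entry of a Gaussian Sch\"urmann triple $(\rho,\eta,L)$, and by the Proposition that triple is completely determined by the vectors $\eta_{P_j},\eta_{Q_j}$ and the reals $\lambda_{P_j},\lambda_{Q_j}$; so it suffices to express $L(R_{m,n,p})$ in terms of this data. Since the triple is Gaussian, $\rho=\eps$, and as $\eps(g)=1$ for each group-like generator $g\in\{P_j,Q_j,Z\}$ the cocycle identity $\eta(ab)=\eta(a)\eps(b)+\rho(a)\eta(b)$ collapses to additivity $\eta(ab)=\eta(a)+\eta(b)$ on all monomials. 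I would first use this to obtain $\eta(g^{-1})=-\eta(g)$, hence $\eta(R_{m,n,p})=\sum_j m_j\eta_{P_j}+\sum_j n_j\eta_{Q_j}+p\,\eta(Z)$, and then $\eta(Z)=0$ by comparing $\eta(P_iQ_i)$ with $\eta(ZQ_iP_i)$, exactly as in the proof of the Proposition.

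Next I would pin down $L$ on the generators. By $*$-linearity $L(g^*)=\overline{L(g)}$, so $L(P_j-P_j^*)=2i\lambda_{P_j}$ forces $\Im L(P_j)=\lambda_{P_j}$; feeding $ab=P_jP_j^*$ into the product rule $L(ab)=L(a)\eps(b)+\eps(a)L(b)+\langle\eta(a^*),\eta(b)\rangle$ and using $L(1)=0$ and $\eta(P_j^*)=-\eta_{P_j}$ gives $2\Re L(P_j)+\norm{\eta_{P_j}}^2=0$, whence $L(P_j)=-\tfrac12(\eta_P)_{jj}+i\lambda_{P_j}$. The same computation yields $L(Q_j)=-\tfrac12(\eta_Q)_{jj}+i\lambda_{Q_j}$, and $L(Z)$ is purely imaginary, say $L(Z)=i\lambda_Z$ with $\lambda_Z$ the real constant determined by $\Im\langle\eta_{P_i},\eta_{Q_j}\rangle$ (independent of $i,j$, as recorded in the proof of the Proposition).

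Finally I would iterate the product rule. Since any monomial in the generators is again group-like and unitary, $\eta((g_1\cdots g_{k-1})^*)=-\sum_{l<k}\eta(g_l)$, so an induction on word length gives
\begin{align*}
  L(g_1\cdots g_k)=\sum_{i=1}^k L(g_i)-\sum_{1\le i<j\le k}\langle\eta(g_i),\eta(g_j)\rangle.
\end{align*}
Applying this to the ordered word $P_1^{m_1}\cdots P_N^{m_N}Q_1^{n_1}\cdots Q_N^{n_N}Z^p=R_{m,n,p}$ (expanding each power into single generators and using $\eta(g^{-1})=-\eta(g)$, $L(g^{-1})=\overline{L(g)}$ when an exponent is negative): the $Z$-factors contribute nothing to the double sum because $\eta(Z)=0$; the within-block pairs $\binom{\betr{m_j}}{2}(\eta_P)_{jj}$ combine with the $\tfrac12\betr{m_j}(\eta_P)_{jj}$ coming from the $\betr{m_j}$ copies of $L(P_j)$ to produce $\tfrac12 m_j^2(\eta_P)_{jj}$; and, using symmetry of $\eta_P$ and $\eta_Q$, the $P$--$P$, $Q$--$Q$ and $P$--$Q$ cross pairs assemble precisely into $-\tfrac12\begin{pmatrix}m&n\end{pmatrix}\begin{pmatrix}\eta_P&\eta_{PQ}\\\eta_{PQ}^t&\eta_Q\end{pmatrix}\begin{pmatrix}m\\n\end{pmatrix}$, the (constant) imaginary part of whose $P$--$Q$ blocks, together with $pL(Z)$ and $\sum_j(m_j\lambda_{P_j}+n_j\lambda_{Q_j})$, recombines into $i(m\cdot\lambda_P+n\cdot\lambda_Q+p\lambda_Z)$. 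There is no genuine obstacle once the Proposition is available; the only care required is this last bookkeeping --- keeping track of the signs introduced by negative exponents and verifying that the ``all $P$'s before all $Q$'s'' ordering produces the correct antisymmetric imaginary cross-term --- and it is routine.
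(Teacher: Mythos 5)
Your proposal is correct and is essentially the intended derivation: the paper states this corollary without proof as an immediate consequence of the preceding Proposition, and your iterated product-rule identity $L(g_1\cdots g_k)=\sum_{i}L(g_i)-\sum_{i<j}\langle\eta(g_i),\eta(g_j)\rangle$ (valid because $\rho=\eps$ makes $\eta$ additive on group-like words, with $\eta(Z)=0$ and $L(Z)=i\lambda_Z$ as in the Proposition's proof) is exactly the bookkeeping being left implicit. One small correction to your closing sentence: the imaginary part of the $P$--$Q$ cross contribution is quadratic in $(m,n)$, so it cannot ``recombine'' into the linear term $i(m\cdot\lambda_P+n\cdot\lambda_Q+p\lambda_Z)$; it simply stays inside the quadratic form, which the stated formula accommodates because the block matrix uses the plain transpose $\eta_{PQ}^{t}$ rather than the adjoint, so that $-\tfrac12\bigl(m^{t}\eta_{PQ}n+n^{t}\eta_{PQ}^{t}m\bigr)=-m^{t}\eta_{PQ}n=-\sum_{i,j}m_in_j\langle\eta_{P_i},\eta_{Q_j}\rangle$ is precisely what your ordered word produces, imaginary part included.
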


\begin{example*}
Note that the generating functional in the previous Corollary strongly resembles the exponent of the characteristic function of a multivariate normal distribution. Furthermore, if $\lambda_Z=0$ then $\eta_{PQ}\in M_N(\rn)$ and the corresponding L\'evy process can be restricted to the classical $2N$-torus via the quotient by the ideal generated by $Z-I$.

In this case the vectors $\lambda_P$ and $\lambda_Q$ dictate the drift in various directions and the matrices $\eta_P,\eta_Q$ and $\eta_{PQ}$ give us information of the covariance. The canonical choice of Brownian motion on a multidimensional object is without drift and should consist of independent one-dimensional Brownian motions in each direction. 

In this scenario, this can be achieved by the choice $H=\cn[2N]$, $\eta_{P_i}=e_i$, $\eta_{Q_j}=e_{N+j}$, $\lambda_{P_i}=\lambda_{Q_j}=0$. Thus, 
  \begin{align*}
    \fa \mu=(m,n)\in\zn[2N]\ \fa p\in\zn:\ L(R_{\mu,p})=-\frac{1}{2}\norm{\mu}_{\ell_2(2N)}^2.
  \end{align*}
\end{example*}

This warrants the following definition of polyhomogeneous operators on $\Hn_N$.
\begin{definition}
  An operator $H:=(\id\otimes L)\circ \Delta$ on $\Hn_N$ is called polyhomogeneous if and only if $\ex r\in\rn\ \ex I\sse\nn\ \ex\alpha\in \ell_1(I)\ \ex d\in(\cn[2N+1]_{\Re(\cdot)<r})^I\ \fa \mu\in\zn[2N+1]:$
  \begin{align*}
    L(R_\mu)=\sum_{\iota\in I}\alpha_\iota \mu^{d_\iota}
  \end{align*}
  where we assume that each $\sum_{\iota\in I}\alpha_\iota \mu^{d_\iota}$ is absolutely convergent.
\end{definition}

Similar to the Toeplitz case, we obtain that the spectrum of a polyhomogeneous operator $H$ is given by the closure of the point spectrum
\begin{align*}
  \sigma_p(H)=\l\{L\l(R_\mu\r);\ \mu\in\zn[2N+1]\r\}
\end{align*}
and we can define gauged polyhomogeneous operators in a similar fashion.

\begin{definition}
  Let $\Gf$ be a poly-holomorphic family of operators satisfying 
  \begin{align*}
    \fa z\in\cn:\ \Gf(z)=(\id\otimes L(z))\circ\Delta
  \end{align*}
  such that each $\l\{L(z)(R_\mu);\ \mu\in\zn[2N+1]\r\}$ is closed and each $\Gf(z)$ is polyhomogeneous with
  \begin{align*}
    \fa \mu\in\zn[2N+1]:\ L(z)(R_\mu)=\sum_{\iota\in I}\alpha_\iota(z)\sigma_{d_\iota+\delta z}(\mu)
  \end{align*}
  where each $\alpha_\iota$ is holomorphic, $\sigma_{d}:\ \rn[2N+1]\to\cn$ is homogeneous of degree $d$, i.e., $\fa\lambda\in\rn_{>0}:\ \sigma_d(\lambda\cdot)=\lambda^d\sigma_d$, and $\delta\in\rn_{>0}$. Then, we call $\Gf$ a gauged polyhomogeneous operator with index set $I$.

  Furthermore, we call $\Gf$ normally gauged if and only if $\delta=1$.
\end{definition}

We can then show that $\zeta$-functions exist.
\begin{theorem}\label{thm:existence-zeta-function-heisenberg}
  Let $\Gf$ be a gauged polyhomogeneous operator with
  \begin{align*}
    \fa \mu\in\zn[2N+1]:\ L(z)(R_\mu)=\sum_{\iota\in I}\alpha_\iota(z)\sigma_{d_\iota+\delta z}(\mu).
  \end{align*}
  Then, $\Gf(z)$ is of trace class if $\fa \iota\in I:\ \Re(d_{\iota}+\delta z)<-2N-1$ and the $\zeta$-function $\zeta(\Gf)$ defined by meromorphic extension of $z\mapsto\tr\Gf(z)$ has isolated first order poles in the set $\l\{\frac{-2N-1-d_{\iota}}{\delta};\ \iota\in I\r\}$. Furthermore, the lowest order Laurent coefficient is tracial.
\end{theorem}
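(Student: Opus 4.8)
The plan is to reduce everything to the classical pseudo-differential $\zeta$-function theory on the flat torus $\rn[2N+1]/{2\pi\zn[2N+1]}$, exactly as in the proof of Theorem~\ref{thm:existence-zeta-function}. Using the spectral description of polyhomogeneous operators on $\Hn_N$ recorded above, $\Gf(z)$ is diagonalised by the basis $(R_\mu)_{\mu\in\zn[2N+1]}$ with $\Gf(z)R_\mu=L(z)(R_\mu)R_\mu$, so that, whenever the right-hand side converges, $\tr\Gf(z)=\sum_{\mu\in\zn[2N+1]}L(z)(R_\mu)=\sum_{\mu\in\zn[2N+1]}\sum_{\iota\in I}\alpha_\iota(z)\sigma_{d_\iota+\delta z}(\mu)$.

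Next I would match this eigenvalue family with that of a classical $\psi$do on the torus. Each $\sigma_{d_\iota+\delta z}$ is homogeneous of degree $d_\iota+\delta z$ on $\rn[2N+1]$; after a harmless smooth cutoff near $\xi=0$ it becomes a classical symbol, and the Fourier multiplier $D(z)$ on $\rn[2N+1]/{2\pi\zn[2N+1]}$ carrying this symbol has $\mu$-th eigenvalue $\sum_{\iota\in I}\alpha_\iota(z)\sigma_{d_\iota+\delta z}(\mu)$, hence the same spectrum (with multiplicities) as $\Gf(z)$ up to the finitely many modes affected by the cutoff, whose total contribution is an entire function of $z$. In particular $\Gf(z)$ is of trace class exactly when $D(z)$ is; since $\dim\rn[2N+1]/{2\pi\zn[2N+1]}=2N+1$, the standard criterion gives trace class as soon as $\Re(d_\iota+\delta z)<-(2N+1)$ for every $\iota\in I$, which is the first assertion.

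For the meromorphy, observe that $z\mapsto D(z)$ is a holomorphic family of classical $\psi$dos of affine order $\delta z+d_\iota$ on a compact $(2N+1)$-dimensional manifold, so the gauged-symbol calculus of Guillemin together with Seeley's complex powers (as recalled in the introduction) shows that $z\mapsto\tr D(z)$, hence $z\mapsto\tr\Gf(z)$, extends meromorphically to $\cn$ with at most simple poles contained in $\bigcup_{\iota\in I}\l\{\tfrac{-(2N+1)-d_\iota}{\delta}\r\}$; the finitely many corrected modes contribute an entire summand and change nothing. Finally, traciality of the lowest order Laurent coefficient is inherited verbatim from the Toeplitz argument: polyhomogeneous operators on $\Hn_N$ pairwise commute, since they are simultaneously diagonal in $(R_\mu)_\mu$, so for polyhomogeneous $A,B$ one has $AB=BA$; comparing a gauge $\Hf$ with $\Hf(0)=B$ against $A\Hf$ and $\Hf A$ and invoking the $\Hn_N$-analogue of Lemma~\ref{lemma-gauge-independence} yields $\loLc(AB)=\loLc(BA)$.

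The step I expect to be the main obstacle is the passage from the abstract homogeneous functions $\sigma_{d_\iota}$ to genuine classical symbols: the definition only requires homogeneity, not smoothness, so if some $\sigma_{d_\iota}$ is not smooth away from the origin the Guillemin--Seeley machinery cannot be cited as a black box, and one must instead run the meromorphic continuation directly — writing $\sum_{\mu\in\zn[2N+1]}\sigma_{d_\iota+\delta z}(\mu)$ as a Mellin transform of a theta-type series $\sum_{\mu}\sigma_{d_\iota}(\mu)e^{-s\abs{\mu}^2}$ and continuing in $z$ — which is where the real estimates live; the bookkeeping of the $\mu=0$ mode (where a negatively homogeneous $\sigma_{d_\iota}$ is singular) is a minor nuisance by comparison.
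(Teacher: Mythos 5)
Your proposal is correct and follows essentially the same route as the paper: the paper's proof likewise identifies the spectrum of $\Gf(z)$ with that of a holomorphic family $D(z)=\sum_{\iota\in I}\alpha_\iota(z)(i\d)^{d_\iota+\delta z}$ of pseudo-differential operators on $\rn[2N+1]/{2\pi\zn[2N+1]}$ and then invokes the established gauged-symbol theory for the trace-class criterion and the location of the simple poles. Your closing caveat about the $\sigma_{d_\iota}$ being merely homogeneous rather than smooth classical symbols is a fair observation, but the paper glosses over it in exactly the same way, so it does not distinguish your argument from theirs.
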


\begin{proof}
  Consider the torus $T:=\rn[2N+1]/{2\pi\zn[2N+1]}$. Then, $\Gf(z)$ has the same spectrum as $D(z):=\sum_{\iota\in I}\alpha_\iota(z)(i\d)^{d_\iota+\delta_\iota z}$ where $\d_k$ is the derivative in the $k$\textsuperscript{th} coordinate direction. In particular, we obtain that $D(z)$ is of trace class if all $\Re(d_\iota+\delta z)<-2N-1$ and since $D(z)$ is a pseudo-differential operator on $T$, we obtain the assertion from the established pseudo-differential theory.
\end{proof}

\begin{corollary}
  Let $\Gf$ be a gauged differential operator on $\Hn[N]$. Then, $I$ is finite and all $d_\iota\in\nn_0$. Then, $\zeta(\Gf)=0$.
\end{corollary}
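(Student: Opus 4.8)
The plan is to treat the two conclusions separately. For the first, unwind the hypothesis: ``$\Gf$ is a gauged differential operator'' means that $\Gf(0)=(\id\otimes L)\circ\Delta$ is a differential operator, i.e.\ $L(R_\mu)=p(\mu)$ for all $\mu\in\zn[2N+1]$ and some polynomial $p\colon\rn[2N+1]\to\cn$. Comparing this with the gauge structure $L(R_\mu)=\sum_{\iota\in I}\alpha_\iota(0)\sigma_{d_\iota}(\mu)$ and splitting $p=\sum_{j=0}^{\deg p}p_j$ into homogeneous components, one uses the elementary fact that functions on $\rn[2N+1]\setminus\{0\}$ that are homogeneous of pairwise distinct degrees are $\cn$-linearly independent --- evaluate along a ray $\lambda\mapsto\lambda\xi$ and compare the coefficients of the distinct powers $\lambda^{d}$. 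Grouping the $\sigma_{d_\iota}$ by degree, every group whose degree does not occur among $\{0,1,\dots,\deg p\}$ must assemble to the zero function and may be discarded, while the surviving groups carry degrees in the finite set $\{0,1,\dots,\deg p\}\sse\nn_0$; after relabelling, $I$ is finite and every $d_\iota\in\nn_0$.

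For the second conclusion --- the vanishing of $\zeta(\Gf)$ at $0$, i.e.\ of $\tr_\zeta(\Gf(0))=\zeta(\Gf)(0)$, exactly as in the Toeplitz statement ``$\tr_\zeta(D)=0$'' --- I would first note that, since every $d_\iota\ge 0>-(2N+1)$, the operator $\Gf(0)$ is non-critical, so $\zeta(\Gf)$ is holomorphic at $0$. Then invoke the reduction used to prove Theorem~\ref{thm:existence-zeta-function-heisenberg}: $\Gf(z)$ has, with multiplicities, the same spectrum as the constant-coefficient operator $D(z)$ on the torus $T=\rn[2N+1]/{2\pi\zn[2N+1]}$ with symbol $\sum_{\iota\in I}\alpha_\iota(z)\sigma_{d_\iota+\delta z}$; hence $\zeta(\Gf)=\zeta(D)$ as meromorphic functions and $\zeta(\Gf)(0)=\tr_\zeta(D(0))$ with $D(0)=p(i\d)$ a genuine constant-coefficient differential operator on $T$. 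Splitting $p=\sum_j p_j$ again, the odd-degree part contributes nothing even to the individual integrand of $\zeta$, since $\sigma_{j+\delta z}(-\mu)=-\sigma_{j+\delta z}(\mu)$ yields a termwise cancellation under $\mu\mapsto-\mu$; the even-degree summands are, at $z=0$, the $\zeta$-regularized trace of the differential operator $p_j(i\d)$ on $T$, which --- being non-critical --- agrees with the Kontsevich--Vishik trace of $p_j(i\d)$ and hence vanishes by the general vanishing of $\trKV$ on differential operators recalled in Section~\ref{sec:intro} (including the degree-$0$ case, $\trKV(\id_{L_2(T)})=0$). Summing over $j$ gives $\zeta(\Gf)(0)=0$.

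The step I expect to be the main obstacle is precisely the identification $\zeta(\Gf)=\zeta(D)$ together with the final appeal to $\zeta$-regularization theory on $T$: one has to verify that the correspondence $R_\mu\leftrightarrow e^{i\langle\mu,\cdot\rangle_{\ell_2(2N+1)}}$ genuinely preserves eigenvalue multiplicities and that the $\mu=0$ term is handled consistently on both sides, and then quote the Kontsevich--Vishik/Paycha--Scott description of $\tr_\zeta$ in the non-critical case carefully, since on a general manifold $\tr_\zeta$ and $\trKV$ differ by a logarithmic-residue/kernel-projection term that must be checked to vanish for constant-coefficient differential operators on the flat torus.
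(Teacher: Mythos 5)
Your proof is correct and follows exactly the route the paper intends but leaves implicit: the corollary is stated without proof, and the analogous Toeplitz example (``$\tr_\zeta(D)$ can be written as a Kontsevich--Vishik trace of a classical pseudo-differential operator, which yields $0$'') together with the proof of Theorem~\ref{thm:existence-zeta-function-heisenberg} shows the intended argument is precisely your reduction to a constant-coefficient operator on $\rn[2N+1]/{2\pi\zn[2N+1]}$ followed by the vanishing of $\trKV$ on differential operators, with your extra care about the $\mu=0$ mode and the kernel-projection term in the Paycha--Scott formula being a genuine subtlety the paper glosses over. Note only that you have (rightly) read the conclusion ``$\zeta(\Gf)=0$'' as ``$\zeta(\Gf)(0)=0$'': taken literally the claim would be false, since already for $\Gf(0)=1$ with gauge $\norm{\mu}_{\ell_2(2N+1)}^{\delta z}$ the function $\zeta(\Gf)$ is an Epstein zeta function, which does not vanish identically.
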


Similarly, the heat-trace in $\Hn_N$ can be reduced to the heat-trace in $\rn[2N]/{2\pi\zn[2N]}$.

\begin{theorem}  
  Let $T$ be the heat-semigroup on $\Hn_N$, $S$ the heat-semigroup on $\rn[2N]/{2\pi\zn[2N]}$, and $\Gf$ a gauged polyhomogeneous operator on $\Hn_N$ with $\Gf(0)=1$. Then, the $\zeta$-regularized heat-trace $\Htr_{\Hn_N,\zeta,\Gf}$ on $\Hn_N$ satisfies
  \begin{align*}
    \Htr_{\Hn_N,\zeta,\Gf}(t)=\zeta(T(t),\Gf)(0)=-\tr S(t).
  \end{align*}
  In particular, the $k$\textsuperscript{th} heat coefficient $A_k(\Hn_N)$ of $\Hn_N$ is $-A_k(\rn[2N]/{2\pi\zn[2N]})$ where $A_k(\rn[2N]/{2\pi\zn[2N]})$ is the $k$\textsuperscript{th} heat coefficient of $\rn[2N]/{2\pi\zn[2N]}$.
\end{theorem}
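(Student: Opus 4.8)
The plan is to reduce, exactly as for the Toeplitz algebra in Section~\ref{sec:heat}, the $\zeta$-regularized heat trace of $T$ to the product of the genuine heat trace of the flat $2N$-torus with a one-variable Riemann-type $\zeta$-factor coming from the central direction, the latter evaluating to $2\zeta_R(0)=-1$.

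First I would record the action of the heat semigroup. The canonical Brownian motion on $\Hn_N$ has a Gaussian Sch\"urmann triple with $\eta(Z)=0$, so its generating functional $L_{BM}(P^mQ^nZ^p)=-\tfrac12\norm{\mu}_{\ell_2(2N)}^2$ (with $\mu=(m,n)$) is independent of the central exponent $p$; hence the Laplacian $\Delta_{\Hn_N}=2H_{BM}$ and its heat semigroup satisfy
\begin{align*}
  \fa m,n\in\zn[N]\ \fa p\in\zn:\quad T(t)R_{m,n,p}=e^{-t\norm{\mu}_{\ell_2(2N)}^2}R_{m,n,p}.
\end{align*}
Thus, under the torus picture behind $\Hn_N$ used in the proof of Theorem~\ref{thm:existence-zeta-function-heisenberg} (matching $\Gf(z)$ with a pseudo-differential operator on $\rn[2N+1]/{2\pi\zn[2N+1]}$), the operator $T(t)$ corresponds to the degenerate heat operator $e^{t\Delta}\otimes\id$, with $\Delta$ the flat Laplacian on $\rn[2N]/{2\pi\zn[2N]}$ and $\id$ acting on the remaining central circle factor.

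By the $\Hn_N$-analogue of Lemma~\ref{lemma-gauge-independence}, the lowest order Laurent coefficient at $0$ of $z\mapsto\tr(T(t)\Gf(z))$ depends only on $T(t)$, so it suffices to evaluate $\zeta(T(t),\Gf)(0)$ for one convenient gauge; I would pick a generalized gauge in the spirit of Section~\ref{sec:heat} that detaches the central direction, with gauge symbol of the form $(\text{homogeneous symbol in }\mu)\cdot|p|^{\delta_2z_2}$. Then for $\Re(z_1),\Re(z_2)$ sufficiently negative $T(t)\Gf(z_1,z_2)$ is of trace class and summing over the canonical basis factorizes:
\begin{align*}
  \zeta(T(t),\Gf)(z_1,z_2)=\l(\sum_{\mu\in\zn[2N]}e^{-t\norm{\mu}^2}\sigma_{\delta_1z_1}(\mu)\r)\cdot\l(\sum_{p\in\zn\setminus\{0\}}|p|^{\delta_2z_2}\r).
\end{align*}
The first factor converges for every $z_1$ because of the Gaussian weight and equals $\tr S(t)$ at $z_1=0$; the second factor is $2\zeta_R(-\delta_2z_2)$. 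Taking $\lim_{z_2\to0}\lim_{z_1\to0}$ and keeping track of the basis elements sent to zero by the gauge, exactly as in the proof of Theorem~\ref{thm:heat-trace-toeplitz}, yields $\Htr_{\Hn_N,\zeta,\Gf}(t)=\zeta(T(t),\Gf)(0)=\tr S(t)\cdot 2\zeta_R(0)=-\tr S(t)$; by the identity theorem in several variables (as discussed before Lemma~\ref{lemma-gauge-independence}) this iterated limit agrees with the value at $0$ of the one-variable $\zeta$-function, so the result is gauge-independent and well defined.

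Finally, for the heat coefficients I would insert the classical small-time expansion $\tr S(t)=(4\pi t)^{-N}\sum_{k\in\nn_0}t^{k/2}A_k(\rn[2N]/{2\pi\zn[2N]})$ into $\Htr_{\Hn_N,\zeta,\Gf}(t)=-\tr S(t)$ and read off, using uniqueness of polyhomogeneous (asymptotic) expansions, that $A_k(\Hn_N)=-A_k(\rn[2N]/{2\pi\zn[2N]})$ for every $k\in\nn_0$. The step I expect to be the main obstacle is the bookkeeping in the central direction, as in the Toeplitz theorem: one must verify that the modes annihilated by $|p|^{\delta_2z_2}$, together with the analytic continuation along $z_2$, contribute precisely the factor $2\zeta_R(0)=-1$ (the $\zeta$-regularized trace of the identity on $L_2$ of the central circle), so that the reassembled heat trace is $-\tr S(t)$ rather than $0$; justifying the factorization of the trace-class sum and the commutation of the iterated limits with the analytic continuation are the accompanying technical points.
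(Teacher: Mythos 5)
Your proposal is correct and follows essentially the same route as the paper: invoke gauge independence to choose a product gauge separating the central direction, factor the trace into a gauged trace of $S(t)$ times $\sum_{p}\abs{p}^{\delta'' z}=2\zeta_R(-\delta''z)$, and evaluate at zero to get the factor $2\zeta_R(0)=-1$. The only cosmetic difference is your use of a two-variable gauge $(z_1,z_2)$ with iterated limits where the paper gauges both factors by a single parameter $z$; the substance, including the resolution of the $p=0$ bookkeeping so that the answer is $-\tr S(t)$ rather than $0$, matches the paper's proof.
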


\begin{proof}
  Using the same argument as in Lemma~\ref{lemma-gauge-independence}, we obtain gauge independence of $\zeta(T(t),\Gf)(0)$ and can choose $\delta',\delta''\in\rn_{>0}$ such that $\fa\mu\in\zn[2N]\ \fa p\in\zn:\ L_{\Gf(z)}(R_{\mu,p})=\norm{\mu}_{\ell_2(2N)}^{\delta'z}\abs p^{\delta'' z}$. Hence, there exists a gauged polyhomogeneous operator $\Hf$ on $\rn[2N]/{2\pi\zn[2N]}$ with $\Hf(0)=1$ such that
  \begin{align*}
    \tr T(t)\Gf(z)=&\sum_{p\in\zn}\ubr{\sum_{\mu\in\zn[2N]}e^{-t\norm\mu_{\ell_2(2N)}^2}\norm{\mu}_{\ell_2(2N)}^{\delta'z}}_{\text{gauged trace of }S(t)}\abs p^{\delta'' z}\\
    =&2\tr(S(t)\Hf(z))\sum_{p\in\nn}\abs p^{\delta'' z}\\
    =&2\zeta_R(-\delta''z)\tr S(t)\Hf(z)
  \end{align*}
  where $\zeta_R$ is the Riemann $\zeta$-function, i.e., $\zeta_R(0)=-\frac12$.
\end{proof}

\section{The discrete Heisenberg group algebras with $Z\in\cn$}\label{sec:heisenberg-Z-complex}

In section~\ref{sec:heisenberg} we considered $\Hn_N$ as generated by the $P_i$, $Q_j$, and $Z$. However, if $Z\in\cn$, then $Z$ is not a generator and we obtain the ``reduced'' algebra $\Hn[r]_N$. Thus, $\Hn[r]_N$ has the generators $\{P^mQ^n;\ m,n\in\zn\}$ and we obtain the following two theorems.

\begin{theorem}\label{thm:existence-zeta-function-heisenberg-reduced}
  Let $\Gf$ be a gauged polyhomogeneous operator with
  \begin{align*}
    \fa \mu\in\zn[2N]:\ L(z)((P,Q)^\mu)=\sum_{\iota\in I}\alpha_\iota(z)\sigma_{d_\iota+\delta z}(\mu).
  \end{align*}
  Then, $\Gf(z)$ is of trace class if $\fa \iota\in I:\ \Re(d_{\iota}+\delta z)<-2N$ and the $\zeta$-function $\zeta(\Gf)$ defined by meromorphic extension of $z\mapsto\tr\Gf(z)$ has isolated first order poles in the set $\l\{\frac{-2N-d_{\iota}}{\delta};\ \iota\in I\r\}$. Furthermore, the lowest order Laurent coefficient is tracial.
\end{theorem}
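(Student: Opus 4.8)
The plan is to transcribe the proof of Theorem~\ref{thm:existence-zeta-function-heisenberg} almost verbatim, the one genuine change being that dropping $Z$ as a generator shrinks the index set of $\Hn[r]_N$ from $\zn[2N+1]$ to $\zn[2N]$, so that the comparison manifold becomes the $2N$-torus rather than the $(2N+1)$-torus. First I would record that a gauged polyhomogeneous operator $\Gf(z)=(\id\otimes L(z))\circ\Delta$ on $\Hn[r]_N$ is diagonal on the basis $\{(P,Q)^\mu;\ \mu\in\zn[2N]\}$, with $\Gf(z)(P,Q)^\mu=L(z)((P,Q)^\mu)\,(P,Q)^\mu$, so that $\sigma_p(\Gf(z))=\{L(z)((P,Q)^\mu);\ \mu\in\zn[2N]\}$ with multiplicities, and that by the closedness hypothesis this is all of $\sigma(\Gf(z))$ — exactly the reasoning already used in Section~\ref{sec:heisenberg}.

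Next I would introduce $T:=\rn[2N]/{2\pi\zn[2N]}$ together with the operator $D(z):=\sum_{\iota\in I}\alpha_\iota(z)\,\sigma_{d_\iota+\delta z}(i\d)$, where $\d=(\d_1,\dots,\d_{2N})$ and $\sigma_{d_\iota+\delta z}(i\d)$ denotes the quantization on $T$ of the homogeneous symbol $\sigma_{d_\iota+\delta z}$. On the Fourier basis $e_\mu:=e^{i\langle\mu,\cdot\rangle}$, $\mu\in\zn[2N]$, one has $D(z)e_\mu=\l(\sum_{\iota\in I}\alpha_\iota(z)\sigma_{d_\iota+\delta z}(\mu)\r)e_\mu=L(z)((P,Q)^\mu)e_\mu$, so $\Gf(z)$ and $D(z)$ carry the same eigenvalues with the same multiplicities and therefore $\tr\Gf(z)=\tr D(z)$ whenever either side converges (the single mode $\mu=0$, where the conventions on homogeneous symbols of negative degree would enter, contributes an entire function of $z$ and is irrelevant below). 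Since $D(z)$ is a classical pseudo-differential operator on the $2N$-dimensional closed manifold $T$ whose order has real part $\max_{\iota\in I}\Re(d_\iota+\delta z)$, it is of trace class precisely when $\fa\iota\in I:\ \Re(d_\iota+\delta z)<-\dim T=-2N$, which is the first assertion; and the established pseudo-differential theory supplies the meromorphic continuation of $z\mapsto\tr D(z)$ with at most simple poles.

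For the exact pole location I would observe that each summand $\alpha_\iota(z)\sigma_{d_\iota+\delta z}(i\d)$ carries a single homogeneity, so that its gauged trace is $\alpha_\iota(z)\sum_{\mu\in\zn[2N]\setminus\{0\}}\sigma_{d_\iota+\delta z}(\mu)$, an Epstein-type zeta series whose analytic continuation has a single simple pole, located at the critical degree $d_\iota+\delta z=-2N$, i.e.\ at $z=\frac{-2N-d_\iota}{\delta}$; summing the $\ell_1(I)$-weighted family then yields the pole set $\l\{\frac{-2N-d_\iota}{\delta};\ \iota\in I\r\}$ with first-order poles. Traciality of the lowest order Laurent coefficient would follow verbatim from the argument of Lemma~\ref{lemma-gauge-independence}: for two gauges with $\Gf(0)=\Hf(0)$, the family $\If(z):=\frac{\Gf(z)-\Hf(z)}{z}$ is again gauged polyhomogeneous, comparing the Laurent expansions of $\zeta(A,\Gf)$, $\zeta(A,\Hf)$ and $\zeta(A,\If)$ forces the leading coefficients of the first two to coincide, and since polyhomogeneous operators on $\Hn[r]_N$ commute (they are simultaneously diagonal on $\{(P,Q)^\mu\}$) this gives $\tr_\zeta(AB)=\tr_\zeta(BA)$ in the non-critical case.

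The only step that is more than a mechanical copy of the $\zn[2N+1]$ computation is pinning down the pole location: a generic classical symbol with a full asymptotic expansion would produce the whole progression $\frac{j-2N-d_\iota}{\delta}$, $j\in\nn_0$, whereas here the symbols $\sigma_{d_\iota+\delta z}$ are purely homogeneous and only the single critical term survives; making this reduction explicit — and checking that neither the $\mu=0$ mode nor the holomorphic dependence on $z$ can move a pole — is the main obstacle.
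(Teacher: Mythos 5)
Your proposal is correct and follows essentially the same route the paper intends: the paper states Theorem~\ref{thm:existence-zeta-function-heisenberg-reduced} without proof as the $Z\in\cn$ analogue of Theorem~\ref{thm:existence-zeta-function-heisenberg}, whose proof is exactly the spectral comparison with a pseudo-differential operator on the torus that you transcribe, with $\rn[2N+1]/{2\pi\zn[2N+1]}$ replaced by $\rn[2N]/{2\pi\zn[2N]}$. Your added care about the purely homogeneous symbols producing only the single critical pole (rather than the full progression $\frac{j-2N-d_\iota}{\delta}$), the $\mu=0$ mode, and the gauge-independence argument for traciality is welcome detail that the paper leaves implicit.
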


\begin{theorem}\label{thm:reduced-heisenberg-heat-trace}
  Let $T$ be the heat-semigroup on $\Hn[r]_N$ and $S$ the heat-semigroup on $\rn[2N]/{2\pi\zn[2N]}$. Then, $\fa t\in\rn_{>0}:\ T(t)$ is of trace class and
  \begin{align*}
    \Htr_{\Hn[r]_N}(t)=\tr T(t)=\tr S(t).
  \end{align*}
  In particular, the $k$\textsuperscript{th} heat coefficient $A_k(\Hn[r]_N)$ of $\Hn[r]_N$ coincides with the $k$\textsuperscript{th} heat coefficient $A_k(\rn[2N]/{2\pi\zn[2N]})$ of $\rn[2N]/{2\pi\zn[2N]}$.
\end{theorem}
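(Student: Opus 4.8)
The plan is to mirror the structure of the proof of the previous heat-trace theorem (Theorem~\ref{thm:reduced-heisenberg-heat-trace}'s predecessor for $\Hn_N$), exploiting that when $Z$ is the scalar $1$ the algebra $\Hn[r]_N$ is generated only by the $P_i$ and $Q_j$, so that the defining relation $P_iQ_i=ZQ_iP_i$ becomes $P_iQ_i=Q_iP_i$. Hence $\Hn[r]_N$ is just the group C*-algebra of $\zn[2N]$ (equivalently $C(\rn[2N]/{2\pi\zn[2N]})$ via Fourier/Pontryagin duality), with $R_{\mu}=(P,Q)^\mu$, $\Delta(R_\mu)=R_\mu\otimes R_\mu$, $\eps(R_\mu)=1$. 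The canonical driftless Gaussian generating functional is $L(R_\mu)=-\tfrac12\norm\mu_{\ell_2(2N)}^2$, so the heat-semigroup $T$ has $T(t)R_\mu=e^{-t\norm\mu_{\ell_2(2N)}^2}R_\mu$, i.e.\ $T(t)$ is diagonal in the basis $(R_\mu)_{\mu\in\zn[2N]}$ with eigenvalues of finite multiplicity $1$ (no $p$-index, so no $\aleph_0$ multiplicity), and $\sum_{\mu\in\zn[2N]}e^{-t\norm\mu^2}<\infty$ for every $t>0$, which already gives the trace-class claim $\tr T(t)<\infty$.

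First I would record that, under the identification of $\Hn[r]_N$ with $C(\rn[2N]/{2\pi\zn[2N]})$ acting on $\ell_2(\zn[2N])$, the operator $T(t)=(\id\otimes\phi_t)\circ\Delta$ is exactly the heat-semigroup $S(t)$ on $\rn[2N]/{2\pi\zn[2N]}$: both are diagonalized by the characters $e^{i\langle\mu,\cdot\rangle}$ with identical eigenvalue $e^{-t\norm\mu_{\ell_2(2N)}^2}$, since $2L = -\norm\cdot^2$ plays the role of the (negative) Laplacian $-\abs\nabla^2$ on the torus. Consequently $\tr T(t)=\sum_{\mu\in\zn[2N]}e^{-t\norm\mu_{\ell_2(2N)}^2}=\tr S(t)$, which is the main equality; the statement about heat coefficients then follows because the asymptotic expansion of $\tr S(t)$ as $t\searrow0$ is, by definition, the one whose coefficients are the $A_k(\rn[2N]/{2\pi\zn[2N]})$, so $A_k(\Hn[r]_N):=A_k$ in the expansion of $\tr T(t)$ equals $A_k(\rn[2N]/{2\pi\zn[2N]})$ coefficient by coefficient. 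I would also note that since $T(t)$ is genuinely trace class here, no $\zeta$-regularization is needed and $\Htr_{\Hn[r]_N,\zeta,\Gf}(t)=\zeta(T(t),\Gf)(0)$ collapses to the honest trace $\tr T(t)$ independent of the chosen gauge $\Gf$ with $\Gf(0)=1$ — this is where one invokes the gauge-independence argument of Lemma~\ref{lemma-gauge-independence} to justify that inserting $\Gf(z)$ and taking $z\to0$ recovers $\tr T(t)$ rather than something smaller (no terms are killed by gauging, in contrast to the Toeplitz case where the $m=0$ or $n=0$ slices had to be re-added).

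The main obstacle, such as it is, is bookkeeping rather than analysis: one must be careful that passing from the full algebra $\Hn_N$ to the reduced algebra $\Hn[r]_N$ with $Z=1$ genuinely removes the $p\in\zn$ summation (which in the $\Hn_N$ case produced the divergent factor $\sum_{p\in\zn}\abs p^{\delta''z}=2\zeta_R(-\delta''z)$, hence the $\zeta_R(0)=-\tfrac12$ and the resulting sign flip) — here that factor is simply absent, so the spurious $-1$ disappears and one gets $+\tr S(t)$ exactly. I would make this explicit by running the same computation as in the $\Hn_N$ proof but observing that the sum over $p$ is not present, so $\tr T(t)\Gf(z)=\tr(S(t)\Hf(z))$ for a gauged polyhomogeneous $\Hf$ on the torus with $\Hf(0)=1$, and then letting $z\to0$ using the established pseudo-differential $\zeta$-theory on $\rn[2N]/{2\pi\zn[2N]}$ to conclude $\zeta(T(t),\Gf)(0)=\zeta(S(t),\Hf)(0)=\tr S(t)$. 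The identification of the heat coefficients is then immediate from the classical heat expansion $\tr S(t)=(4\pi t)^{-N}\sum_{k\in\nn_0}A_k(\rn[2N]/{2\pi\zn[2N]})t^{k/2}$ for $t\searrow0$.
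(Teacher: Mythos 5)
Your proposal is correct and is essentially the argument the paper intends (the paper states this theorem without proof; the implicit proof is the $\Hn_N$ computation of the previous section with the $p$-summation, and hence the spurious factor $2\zeta_R(-\delta''z)$, removed — exactly as you describe). One small inaccuracy: for general $Z\in\cn$ the algebra $\Hn[r]_N$ is \emph{not} $C(\rn[2N]/{2\pi\zn[2N]})$ but a non-commutative torus (cf.\ the paper's remark that $\Hn[r]_1=A_\theta$), so your identification "$T(t)$ is exactly $S(t)$" only holds literally at $Z=1$; however, this does not affect the theorem, since the trace is computed as the sum of diagonal entries over the basis $\l((P,Q)^\mu\r)_{\mu\in\zn[2N]}$, and the eigenvalues $e^{-t\norm{\mu}_{\ell_2(2N)}^2}$ with their multiplicities are independent of $Z$, giving $\tr T(t)=\sum_{\mu\in\zn[2N]}e^{-t\norm{\mu}_{\ell_2(2N)}^2}=\tr S(t)$ in all cases.
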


\begin{remark*}
  These results do not come as a surprise, since $\Hn[r]_1$ is the non-commutative $2$-torus $A_\theta$ generated by two unitaries $U$ and $V$ satisfying $UV=e^{-2\pi i\theta}VU$ where $\theta\in\rn$. As such the family $(A_\theta)_{\theta\in\rn}$ is a fundamental class of examples of non-commutative spaces generalizing the algebra of continuous functions on the $2$-torus; a property recovered by the Brownian motion approach to the heat-trace.
\end{remark*}

\section{The non-commutative torus}\label{sec:non-com-torus}

Having observed the non-commutative $2$-torus as a special case of the discrete Heisenberg group, we want to continue studying more general non-commutative tori. This will also give us a direct means of comparison with the classical heat-trace approach on the non-commutative torus $\Tn[n]_\theta$ as studied in \cite{azzali-levy-neira-paycha,levy-neira-paycha}. There, too, the heat-trace recovered the dimension of the underlying torus, i.e., the commutative case $\Tn[n]_0=\rn[n]/{2\pi\zn[n]}$.

Let us first recall the usual construction of the non-commutative torus. Given a real symmetric $N\times N$ matrix $\theta$ and unitaries $U_k$ with $k\in\zn[N]$, $U_0=1$, and
\begin{align*}
  \fa m,n\in\zn[N]:\ U_mU_n=e^{-\pi i\langle m,\theta n\rangle_{\ell_2(N)}}U_{m+n},
\end{align*}
we consider the algebra $A_\theta:=\l\{\sum_{k\in\zn[N]}a_k U_k;\ a\in\Sp(\zn[N])\r\}$ where $\Sp$ denotes the Schwartz space. Then, it is possible to define a corresponding algebra of pseudo-differential operators which has been extensively studied in~\cite{levy-neira-paycha}. The for us interesting operator is the Laplace $\Delta_\theta:=\sum_{j=1}^N\d_j^2$ where $\d_j\sum_{k\in\zn[N]}a_k U_k:=\sum_{k\in\zn[N]}k_ja_k U_k$.

Since we need our algebra to be a C*-bialgebra, we consider the C*-algebra $\Ap[N]_\theta$ generated by the $U_k$ ($k\in\zn[N]$) as well as a finite set of additional (unitary) generators $\l\{Z_\tau;\ \tau\in {\Tf}\r\}$ (${\Tf}\in\nn$), and set $\eps(U_k)=\eps(Z_\tau)=1$, $\Delta(U_k)=U_k\otimes U_k$, and $\Delta(Z_\tau)=Z_\tau\otimes Z_\tau$. The $Z_\tau$ are a generalized version of the $e^{-\pi i\theta_{k,l}}$ which we consider to be generators as well (for now). Thus, the Laplacian $\Delta_{\theta}$ has the generating functional $L$ which satisfies
\begin{align*}
  \fa p\in \zn[{\Tf}]\ \fa k\in \zn[N]:\ L_{\Delta_{\theta}}(Z^pU_k)=-\norm k_{\ell_2(N)}^2.
\end{align*}
This follows from $\Delta_\theta\sum_{k\in\zn[N]}a_k U_k=\sum_{k\in\zn[N]}\sum_{j=1}^N k_j^2 a_k U_k$ and is consistent with the Brownian motion approach (cf. example below Definition~\ref{def:heisenberg}). In particular, we obtain similar results to the Heisenberg group algebra case.
\begin{theorem}\label{thm:existence-zeta-function-nc-torus}
  Let $\Gf$ be a gauged polyhomogeneous operator on $\Ap[N]_\theta$ with
  \begin{align*}
    \fa p\in \zn[{\Tf}]\ \fa k\in \zn[N]:\ L(z)(Z^pU_k)=\sum_{\iota\in I}\alpha_\iota(z)\sigma_{d_\iota+\delta z}((p,k)).
  \end{align*}
  Then, $\Gf(z)$ is of trace class if $\fa \iota\in I:\ \Re(d_{\iota}+\delta z)<-N-{\Tf}$ and the $\zeta$-function $\zeta(\Gf)$ defined by meromorphic extension of $z\mapsto\tr\Gf(z)$ has isolated first order poles in the set $\l\{\frac{-N-{\Tf}-d_{\iota}}{\delta};\ \iota\in I\r\}$. Furthermore, the lowest order Laurent coefficient is tracial.
\end{theorem}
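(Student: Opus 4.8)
The plan is to reuse, essentially verbatim, the strategy behind Theorems~\ref{thm:existence-zeta-function-heisenberg} and~\ref{thm:existence-zeta-function-heisenberg-reduced}: transport the spectral problem to an honest torus and invoke the established pseudo-differential calculus there (either the classical one or the non-commutative torus calculus of \cite{levy-neira-paycha}). First I would record that a polyhomogeneous operator $\Gf(z)=(\id\otimes L(z))\circ\Delta$ on $\Ap[N]_\theta$ acts diagonally on the linear basis $\l\{Z^pU_k;\ p\in\zn[{\Tf}],\ k\in\zn[N]\r\}$, namely $\Gf(z)(Z^pU_k)=L(z)(Z^pU_k)\,Z^pU_k$, so that, as in the Toeplitz and Heisenberg cases, $\sigma(\Gf(z))=\oli{\sigma_p(\Gf(z))}$ with $\sigma_p(\Gf(z))=\l\{L(z)(Z^pU_k);\ (p,k)\in\zn[{\Tf}+N]\r\}$ including multiplicities; hence only the eigenvalue sequence $\l(\sum_{\iota\in I}\alpha_\iota(z)\sigma_{d_\iota+\delta z}((p,k))\r)_{(p,k)\in\zn[{\Tf}+N]}$ is relevant for the trace.

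Next I would realize this sequence as the spectrum of a classical pseudo-differential operator on the torus $T:=\rn[N+{\Tf}]/{2\pi\zn[N+{\Tf}]}$. Writing $\d=(\d_1,\dots,\d_{N+{\Tf}})$ for the gradient on $T$, the Fourier multiplier $D(z):=\sum_{\iota\in I}\alpha_\iota(z)\sigma_{d_\iota+\delta z}(i\d)$ has eigenvalue $\sum_{\iota\in I}\alpha_\iota(z)\sigma_{d_\iota+\delta z}((p,k))$ on the character $e^{i\langle(p,k),\cdot\rangle}$, matching $\Gf(z)$ with identical multiplicities. As in the earlier proofs one must absorb into the trace-class remainder $\Gf_0(z)$ the finitely many characters on which some $\sigma_{d_\iota+\delta z}$ is not defined, i.e. those indices $(p,k)$ lying in a coordinate subspace along which the homogeneous symbol is singular at the origin; this is harmless since for each $\iota$ only finitely many such indices occur and they contribute an entire correction. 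It then follows that $\Gf(z)$ is of trace class precisely when $\sum_{(p,k)\in\zn[{\Tf}+N]}\abs{\sigma_{d_\iota+\delta z}((p,k))}<\infty$ for every $\iota$, i.e. when $\fa\iota\in I:\ \Re(d_\iota+\delta z)<-N-{\Tf}$, and that $\tr\Gf(z)=\tr\Gf_0(z)+\tr D(z)$ on the half-space of absolute convergence.

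The meromorphic continuation and the pole locations then come straight from the established theory: $z\mapsto D(z)$ is a gauged family of pseudo-differential operators on $T$ in which the $\iota$-th component has affine order $d_\iota+\delta z$ with positive gauge scaling $\delta$, so $z\mapsto\tr D(z)$ extends meromorphically to $\cn$ with at most simple poles at the points $\frac{j-N-{\Tf}-d_\iota}{\delta}$ ($j\in\nn_0$, $\iota\in I$); since $\Gf_0$ contributes a term holomorphic on a half-space, $\zeta(\Gf)$ inherits isolated first order poles contained in $\l\{\frac{-N-{\Tf}-d_\iota}{\delta};\ \iota\in I\r\}$, the $j>0$ shifts only refining this set. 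Traciality of the lowest order Laurent coefficient follows by the argument of Lemma~\ref{lemma-gauge-independence}: gauge independence reduces the claim to the monomial gauges $L_{\Gf(z)}(Z^pU_k)=\sigma_{\delta z}((p,k))$, for which the diagonal identity $L_{AB}=L_AL_B=L_{BA}$ together with cyclicity of $\tr$ on $T$ yields $\tr_\zeta(AB)=\tr_\zeta(BA)$ whenever the product is non-critical. The only genuine subtlety, and hence the main obstacle, is the careful bookkeeping of these degenerate origin indices when setting up the spectral identification with $D(z)$ and checking that the symbols $\sigma_{d_\iota+\delta z}$ extend to bona fide classical symbols on $T$; once that is in place everything reduces to results already established above.
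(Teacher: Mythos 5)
Your proposal is correct and follows essentially the same route as the paper: the paper proves the Heisenberg analogue (Theorem~\ref{thm:existence-zeta-function-heisenberg}) by identifying the spectrum of $\Gf(z)$ with that of the Fourier multiplier $D(z)=\sum_{\iota\in I}\alpha_\iota(z)\sigma_{d_\iota+\delta z}(i\d)$ on the torus of the appropriate dimension and then citing the established pseudo-differential theory, and states the non-commutative torus case as following in the same way with the torus $\rn[N+{\Tf}]/{2\pi\zn[N+{\Tf}]}$. Your extra bookkeeping of the degenerate origin indices and the gauge-independence argument for traciality are consistent with the paper's treatment (the latter mirrors Lemma~\ref{lemma-gauge-independence} and the traciality proposition in Section~\ref{sec:zeta}); note only that since each $\sigma_{d_\iota+\delta z}$ is exactly homogeneous, the lower-order pole locations $j>0$ do not actually occur, which is why the paper lists only $\l\{\frac{-N-{\Tf}-d_\iota}{\delta};\ \iota\in I\r\}$.
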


\begin{corollary}
  Let $\Gf$ be a gauged differential operator on $\Ap[N]_\theta$. Then, $I$ is finite and all $d_\iota\in\nn_0$. Then, $\zeta(\Gf)=0$.
\end{corollary}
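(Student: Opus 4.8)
The plan is to reproduce, for a single operator, the reduction already used in the proof of Theorem~\ref{thm:existence-zeta-function-nc-torus}: transport $\Gf$ to the commutative torus $M:=\rn[N+{\Tf}]/{2\pi\zn[N+{\Tf}]}$ and invoke the classical fact recalled in Section~\ref{sec:intro} that the Kontsevich--Vishik trace of a differential operator vanishes. First I would unwind the hypothesis. Being a \emph{differential} operator, $\Gf(0)$ has a polynomial generating functional, i.e.\ $L_{\Gf(0)}(Z^pU_k)=q(p,k)$ for a polynomial $q\colon\rn[{\Tf}+N]\to\cn$. Decomposing $q$ into its finitely many homogeneous components, each of integer degree $\geq0$, shows at once that $I$ may be taken finite, that every $d_\iota\in\nn_0$, and that the $\sigma_{d_\iota}$ are homogeneous polynomials with $\sum_{\iota\in I}\alpha_\iota(0)\sigma_{d_\iota}=q$. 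In particular no $d_\iota$ equals $-N-{\Tf}$, so by Theorem~\ref{thm:existence-zeta-function-nc-torus} the function $\zeta(\Gf)$ is holomorphic near $0$ and $\zeta(\Gf)(0)=\tr_\zeta(\Gf(0))$ is the unambiguous $\zeta$-regularized trace.

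Second, exactly as in the proof of Theorem~\ref{thm:existence-zeta-function-nc-torus}, $\Gf(z)$ has the same spectrum with multiplicities as the classical gauged family $D(z):=\sum_{\iota\in I}\alpha_\iota(z)\,\sigma_{d_\iota}(i\d)\,\Lambda^{\delta z}$ on $M$, where $\Lambda$ is the invertible elliptic weight realizing $\sigma_{\delta z}$ (with the $(p,k)=0$ contribution treated as there). Hence $\zeta(\Gf)$ is the meromorphic continuation of $z\mapsto\sum_{\iota\in I}\alpha_\iota(z)\,\tr\bigl(\sigma_{d_\iota}(i\d)\,\Lambda^{\delta z}\bigr)$, a finite linear combination of the $\zeta$-functions $\zeta\bigl(\sigma_{d_\iota}(i\d),\Lambda\bigr)$ attached to the \emph{differential} operators $\sigma_{d_\iota}(i\d)$ on the closed manifold $M$.

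Finally, each $\sigma_{d_\iota}(i\d)$ has a polynomial symbol, hence no homogeneous symbol component of degree $-\dim M$; therefore its Wodzicki residue vanishes, the operator is non-critical, and — since $\Lambda$ is invertible, so no $\tr(\cdot\,\pr_{\ker\Lambda})$ term occurs, and no $\res(\cdot\,\ln\Lambda)$ anomaly arises in the non-critical case — the constant Laurent coefficient at $0$ of $\zeta\bigl(\sigma_{d_\iota}(i\d),\Lambda\bigr)$ equals $\trKV\sigma_{d_\iota}(i\d)$, which is $0$ because the operator is differential. Evaluating the finite sum at $z=0$ with the weights $\alpha_\iota(0)$ then gives $\zeta(\Gf)(0)=0$, i.e.\ $\zeta(\Gf)=0$ in the sense of the $\zeta$-regularized trace, exactly as in the Toeplitz case above. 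The step I expect to be the real work is the spectral identification $\Gf(z)\leftrightarrow D(z)$: it must be set up precisely as in Theorem~\ref{thm:existence-zeta-function-nc-torus}, and one must check that the holomorphic prefactors $\alpha_\iota$ do not reintroduce a pole at the origin, so that evaluation at $0$ genuinely decomposes into the individual (vanishing) Kontsevich--Vishik traces.
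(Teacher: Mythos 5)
Your argument is correct and is essentially the (unwritten) proof the paper intends: identify $I$ finite with $d_\iota\in\nn_0$ from the polynomial generating functional, transfer $\Gf(z)$ spectrally to a gauged differential operator on the commutative torus as in Theorem~\ref{thm:existence-zeta-function-nc-torus}, and invoke non-criticality together with the vanishing of the Kontsevich--Vishik trace of differential operators, exactly as in the Toeplitz example of Section~\ref{sec:zeta}. Your parenthetical reading of the conclusion is also the right one: what is proved (and what the paper means, cf.\ ``$\tr_\zeta(D)=0$'' there) is $\zeta(\Gf)(0)=\tr_\zeta(\Gf(0))=0$, not the vanishing of the meromorphic function itself, which is in general a nonzero Epstein-type zeta function.
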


\begin{theorem}
  Let $T$ be the heat-semigroup on $\Ap[N]_\theta$, $S$ the heat-semigroup on $\rn[N]/{2\pi\zn[N]}$, and $\Gf$ a gauged polyhomogeneous operator on $\Ap[N]_\theta$ with $\Gf(0)=1$. Then, the $\zeta$-regularized heat-trace $\Htr_{\Ap[N]_\theta,\zeta,\Gf}$ on $\Ap[N]_\theta$ satisfies
  \begin{align*}
    \Htr_{\Ap[N]_\theta,\zeta,\Gf}(t)=\zeta(T(t),\Gf)(0)=(-1)^{\Tf}\tr S(t).
  \end{align*}
  In particular, the $k$\textsuperscript{th} heat coefficient $A_k(\Ap[N]_\theta)$ of $\Ap[N]_\theta$ is $(-1)^{\Tf}A_k(\rn[N]/{2\pi\zn[N]})$ where $A_k(\rn[N]/{2\pi\zn[N]})$ is the $k$\textsuperscript{th} heat coefficient of $\rn[N]/{2\pi\zn[N]}$.
\end{theorem}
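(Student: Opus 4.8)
The plan is to mirror the proof of the corresponding heat-trace theorem for $\Hn_N$ in section~\ref{sec:heisenberg}, treating the $\Tf$ additional unitary generators $Z_\tau$ the way the single central unitary $Z$ was treated there, only now $\Tf$-fold.

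First I would reduce to a convenient gauge. By the argument of Lemma~\ref{lemma-gauge-independence} -- which, as remarked in the text preceding it, survives restriction of a multivariate gauge to the diagonal -- the value $\zeta(T(t),\Gf)(0)$ depends only on $\Gf(0)=1$, so I may replace $\Gf$ by the factorized gauged polyhomogeneous operator determined by
\begin{align*}
  L_{\Gf(z)}\l(Z^pU_k\r)=\norm{k}_{\ell_2(N)}^{\delta'z}\prod_{\tau=1}^{\Tf}\abs{p_\tau}^{\delta''z}
\end{align*}
for suitable $\delta',\delta''\in\rn_{>0}$; this is homogeneous of degree $(\delta'+\Tf\delta'')z$ in $(p,k)\in\rn[N+{\Tf}]$ and equals $1$ at $z=0$, hence is admissible. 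Since $T(t)$ and each $\Gf(z)$ are simultaneously diagonal on the basis $\l\{Z^pU_k\r\}$, with $T(t)$ acting through the $p$-independent eigenvalue $e^{-t\norm{k}_{\ell_2(N)}^2}$, the trace separates on the half-space of absolute convergence as
\begin{align*}
  \tr\l(T(t)\Gf(z)\r)&=\l(\sum_{k\in\zn[N]}e^{-t\norm{k}_{\ell_2(N)}^2}\norm{k}_{\ell_2(N)}^{\delta'z}\r)\prod_{\tau=1}^{\Tf}\l(\sum_{q\in\zn}\abs{q}^{\delta''z}\r)\\
  &=\tr\l(S(t)\Hf(z)\r)\cdot\l(2\zeta_R(-\delta''z)\r)^{\Tf},
\end{align*}
where $\Hf$ is the gauged polyhomogeneous operator on $\rn[N]/{2\pi\zn[N]}$ with symbol $\norm{k}_{\ell_2(N)}^{\delta'z}$ (so $\Hf(0)=1$), $\zeta_R$ is the Riemann $\zeta$-function, and the $q=0$ term of each factor is discarded exactly as in section~\ref{sec:heisenberg}.

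For fixed $t>0$ the $k$-series is entire in $z$ and each $q$-series equals $2\zeta_R(-\delta''z)$, which is holomorphic near $0$; so the displayed identity already furnishes the meromorphic extension of $z\mapsto\tr(T(t)\Gf(z))$, and evaluating at $z=0$ with $\zeta_R(0)=-\frac12$ and $\tr(S(t)\Hf(0))=\tr S(t)$ yields $\Htr_{\Ap[N]_\theta,\zeta,\Gf}(t)=\zeta(T(t),\Gf)(0)=(-1)^{\Tf}\tr S(t)$. Comparing the $t\searrow0$ asymptotic expansions of the two sides then gives $A_k(\Ap[N]_\theta)=(-1)^{\Tf}A_k(\rn[N]/{2\pi\zn[N]})$. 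The only step requiring care -- and it is no worse here than in section~\ref{sec:heisenberg} -- is the bookkeeping of the ``boundary modes'' $k=0$ and $p_\tau=0$: the asymmetry between the ``geometric'' directions $U_k$ (whose $k=0$ mode is kept, as $\Delta_\theta+\pr_{\ker}$ acts as the identity on it and contributes $1$ to $\tr S(t)$) and the ``twist'' directions $Z_\tau$ (whose $p_\tau=0$ mode is discarded from $\sum_{q\in\zn}\abs{q}^{\delta''z}$) is precisely what leaves the torus heat-trace intact while attaching a factor $2\zeta_R(0)=-1$ to each of the $\Tf$ extra directions.
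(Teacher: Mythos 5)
Your proof is correct and is essentially the argument the paper intends: the paper states this theorem without proof as ``analogous'' to the heat-trace theorem for the discrete Heisenberg group algebra, and your argument is precisely that proof transplanted, with the single central unitary $Z$ replaced by the $\Tf$ twist generators $Z_\tau$, each contributing a factor $2\zeta_R(-\delta''z)\to 2\zeta_R(0)=-1$ at $z=0$ while the $U_k$-directions reproduce the gauged torus heat-trace $\tr(S(t)\Hf(z))\to\tr S(t)$. Your explicit bookkeeping of the zero modes ($k=0$ kept, $p_\tau=0$ discarded) is, if anything, slightly more careful than the paper's own treatment of the Heisenberg case.
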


Similarly, for $A^N_\theta$, i.e., the case $Z\in\cn[{\Tf}]$, we obtain the following analogous theorems.
\begin{theorem}
  Let $\Gf$ be a gauged polyhomogeneous operator on $A^N_\theta$ with
  \begin{align*}
    \fa k\in \zn[N]:\ L(z)(U_k)=\sum_{\iota\in I}\alpha_\iota(z)\sigma_{d_\iota+\delta z}(k).
  \end{align*}
  Then, $\Gf(z)$ is of trace class if $\fa \iota\in I:\ \Re(d_{\iota}+\delta z)<-N$ and the $\zeta$-function $\zeta(\Gf)$ defined by meromorphic extension of $z\mapsto\tr\Gf(z)$ has isolated first order poles in the set $\l\{\frac{-N-d_{\iota}}{\delta};\ \iota\in I\r\}$. Furthermore, the lowest order Laurent coefficient is tracial.
\end{theorem}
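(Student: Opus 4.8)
The plan is to repeat, almost verbatim, the reduction-to-a-torus argument used for Theorem~\ref{thm:existence-zeta-function-heisenberg-reduced}: in the case $Z\in\cn[{\Tf}]$ the only surviving generators are the $U_k$ with $k\in\zn[N]$, and every polyhomogeneous operator is diagonal in this basis. First I would record the spectral picture. Since $\Gf(z)=(\id\otimes L(z))\circ\Delta$ and $\Delta(U_k)=U_k\otimes U_k$, we have $\Gf(z)U_k=L(z)(U_k)U_k$ for all $k\in\zn[N]$; because the canonical trace $\tau(\sum_ka_kU_k)=a_0$ is faithful, the $U_k$ are linearly independent and form an orthonormal basis of the GNS Hilbert space $L_2(A^N_\theta,\tau)$, so the point spectrum of $\Gf(z)$, with multiplicities, is precisely $\l\{\sum_{\iota\in I}\alpha_\iota(z)\sigma_{d_\iota+\delta z}(k);\ k\in\zn[N]\r\}$, each eigenvalue occurring once. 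The essential observation is that this multiset is independent of $\theta$: it coincides with the eigenvalue multiset of the classical elliptic pseudo-differential operator $D(z):=\sum_{\iota\in I}\alpha_\iota(z)\sigma_{d_\iota+\delta z}(\abs\nabla)$ on $\rn[N]/{2\pi\zn[N]}$, whose eigenfunctions $e^{i\langle k,\cdot\rangle_{\ell_2(N)}}$ carry exactly the same eigenvalues. Hence $\tr\Gf(z)$, interpreted in the GNS representation, equals $\tr_{L_2(\rn[N]/{2\pi\zn[N]})}D(z)$ whenever either side converges absolutely.

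For the trace-class statement I would estimate $\tr\Gf(z)=\sum_{k\in\zn[N]}L(z)(U_k)$ directly: each $\sigma_{d_\iota+\delta z}$ is homogeneous of degree $d_\iota+\delta z$ and smooth on $\rn[N]\setminus\{0\}$, so $\abs{\sigma_{d_\iota+\delta z}(k)}=O\l(\norm k_{\ell_2(N)}^{\Re(d_\iota)+\delta\Re(z)}\r)$, while $\sum_{k\in\zn[N]\setminus\{0\}}\norm k_{\ell_2(N)}^{-s}<\infty$ exactly for $\Re(s)>N$. Therefore $\Gf(z)$ is of trace class as soon as $\fa\iota\in I:\ \Re(d_\iota+\delta z)<-N$, and on this half-space $z\mapsto\tr\Gf(z)$ is holomorphic.

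The meromorphic continuation and the pole structure are then inherited from the established pseudo-differential $\zeta$-calculus on the closed $N$-manifold $\rn[N]/{2\pi\zn[N]}$, exactly as invoked in the proof of Theorem~\ref{thm:existence-zeta-function} and in~\cite{levy-neira-paycha}: $z\mapsto\tr D(z)$ extends meromorphically to a half-space $\cn_{\Re(\cdot)<R}$ ($R\in\rn_{>0}$) with at most simple poles, and a pole can only occur where a homogeneous component of $D(z)$ reaches order $-N$, i.e.\ at the points $\frac{-N-d_\iota}{\delta}$ for $\iota\in I$. Traciality of the lowest order Laurent coefficient follows from the argument of Lemma~\ref{lemma-gauge-independence} together with the fact that any two polyhomogeneous operators on $A^N_\theta$ commute, both being diagonal in $(U_k)_{k\in\zn[N]}$; gauge-independence of $\loLc$ then yields $\loLc(AB)=\loLc(BA)$.

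The only point that is not purely routine is the very first reduction: one must make precise that ``trace class'' and ``trace'' for $\Gf(z)$ refer to the GNS representation of the canonical trace of $A^N_\theta$, in which the $U_k$ are an orthonormal basis, so that the deformation parameter $\theta$ genuinely drops out of every spectral quantity and the passage to the commutative torus is legitimate. Once this is fixed, the remaining steps are applications of results already established above.
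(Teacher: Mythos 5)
Your proposal is correct and follows essentially the same route as the paper: the paper leaves this theorem unproved as an analogue of Theorem~\ref{thm:existence-zeta-function-heisenberg}, whose proof is exactly your reduction — the spectrum of $\Gf(z)$ on the basis $(U_k)_{k\in\zn[N]}$ is $\theta$-independent and coincides with that of a classical gauged pseudo-differential operator on $\rn[N]/{2\pi\zn[N]}$, whence trace-class for $\Re(d_\iota+\delta z)<-N$, the meromorphic continuation with simple poles at $\frac{-N-d_\iota}{\delta}$, and traciality via the gauge-independence argument of Lemma~\ref{lemma-gauge-independence}. Your added care about the GNS representation of the canonical trace is a welcome precision the paper glosses over, but it does not change the argument.
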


\begin{corollary}
  Let $\Gf$ be a gauged differential operator on $A^N_\theta$. Then, $I$ is finite and all $d_\iota\in\nn_0$. Then, $\zeta(\Gf)=0$.
\end{corollary}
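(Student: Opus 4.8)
The plan is to repeat the reduction to a classical torus that underlies the theorem just stated (cf. the proof of Theorem~\ref{thm:existence-zeta-function-heisenberg}) and then to invoke the vanishing of the Kontsevich--Vishik trace of differential operators recalled in Section~\ref{sec:intro}; the argument is the same, line for line, as for the analogous corollaries for the discrete Heisenberg group algebra and for $\Ap[N]_\theta$.

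First I would unpack the hypothesis. A gauged differential operator is a gauged polyhomogeneous operator $\Gf$ whose value $\Gf(0)=(\id\otimes L_{\Gf(0)})\circ\Delta$ is a differential operator, i.e. $L_{\Gf(0)}(U_k)=p(k)$ for a polynomial $p\colon\rn[N]\to\cn$. Decomposing $p=\sum_{j=0}^m p_j$ into its homogeneous components, with $p_j$ homogeneous of degree $j$, shows that the index set $I$ may be taken finite and every degree $d_\iota\in\{0,1,\dots,m\}\subseteq\nn_0$, which is the first assertion; in particular no $d_\iota$ equals $-N$, so $\Gf(0)$ is non-critical.

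Next I would pass to the classical torus. Exactly as in the proof of Theorem~\ref{thm:existence-zeta-function-heisenberg}, every $\Gf(z)$ has the same spectrum, counted with multiplicities, as the classical gauged pseudo-differential operator $D(z)$ on $\rn[N]/{2\pi\zn[N]}$ given by the Fourier multiplier $k\mapsto\sum_{\iota\in I}\alpha_\iota(z)\sigma_{d_\iota+\delta z}(k)$; in particular $D(0)=p(i\partial)$ is a constant-coefficient differential operator on the closed manifold $\rn[N]/{2\pi\zn[N]}$, and $\zeta(\Gf)$ is the meromorphic extension of $z\mapsto\tr D(z)$. Since $\Gf(0)$ is non-critical, the lowest order Laurent coefficient of $\zeta(\Gf)$ at $0$ is its value there, and by the Paycha--Scott description recalled in Section~\ref{sec:intro} this value is $\trKV D(0)$; the Kontsevich--Vishik trace of a differential operator vanishes, since its symbol is a polynomial, all of whose homogeneous components are removed in forming the regularized kernel $k^{\mathrm{reg}}$. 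Hence $\zeta(\Gf)=\tr_\zeta(\Gf(0))=\trKV D(0)=0$.

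The one point requiring care — and the only real obstacle — is the bookkeeping around the unit $U_0$, over which the trace on $A^N_\theta$ also runs: a homogeneous symbol $\sigma_{d_\iota+\delta z}$ vanishes at the origin as soon as $\Re(d_\iota+\delta z)>0$, so any constant term of $p$ is carried by the trace-class piece $\Gf_0$, and one must check that this extra finite-rank contribution is cancelled, at $z=0$, by the value at the origin of the Epstein-type Dirichlet series $z\mapsto\sum_{k\ne0}\sigma_{d_\iota+\delta z}(k)$. Concretely, this is the fact that $\Gamma(w)$ times such a series is holomorphic at $w=0$ while $\Gamma$ itself has a pole there, which forces the series to vanish at $0$; this is exactly the mechanism behind the vanishing of the Kontsevich--Vishik trace of differential operators, so nothing beyond the classical theory is needed once it is cited in this form.
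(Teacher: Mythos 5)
Your argument is correct and is essentially the one the paper intends for this (unproved) corollary: unpack the definition to get finite $I$ and $d_\iota\in\nn_0$ (hence non-criticality, since no $d_\iota=-N$), identify the spectrum of $\Gf(z)$ with that of a constant-coefficient gauged pseudo-differential operator on $\rn[N]/{2\pi\zn[N]}$, and invoke the vanishing of the Kontsevich--Vishik trace of differential operators, reading the conclusion ``$\zeta(\Gf)=0$'' as $\zeta(\Gf)(0)=\tr_\zeta(\Gf(0))=0$ (which is the only tenable reading, since e.g.\ for $N=1$, $p(k)=k^2$ the function is $2\zeta_R(-2-z)$, not identically zero). The only imprecision is your final sentence: for the degree-zero component the Epstein series $\sum_{k\ne0}\abs{k}^{\delta z}$ does not vanish at $z=0$ but equals $-1$ there, and it is exactly this value that cancels the $k=0$ contribution, as your preceding sentence already correctly states.
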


\begin{theorem}
  Let $T$ be the heat-semigroup on $A^N_\theta$ and $S$ the heat-semigroup on $\rn[N]/{2\pi\zn[N]}$. Then, $\fa t\in\rn_{>0}:\ T(t)$ is of trace class and
  \begin{align*}
    \Htr_{A^N_\theta}(t)=\tr T(t)=\tr S(t).
  \end{align*}
  In particular, the $k$\textsuperscript{th} heat coefficient $A_k(A^N_\theta)$ of $A^N_\theta$ coincides with the $k$\textsuperscript{th} heat coefficient $A_k(\rn[N]/{2\pi\zn[N]})$ of $\rn[N]/{2\pi\zn[N]}$.
\end{theorem}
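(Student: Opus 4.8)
The plan is to use that, just as in the reduced Heisenberg situation of Theorem~\ref{thm:reduced-heisenberg-heat-trace}, the twist $\theta$ is invisible to the heat dynamics. On $A^N_\theta$ the co-multiplication is $\Delta(U_k)=U_k\otimes U_k$ and the Laplacian has generating functional $L_{\Delta_\theta}(U_k)=-\norm{k}_{\ell_2(N)}^2$, both independent of $\theta$; hence the heat-semigroup $T(t)$ is the bounded operator on $\ell_2(\zn[N])$ that is diagonal in the canonical basis $(U_k)_{k\in\zn[N]}$ with $T(t)U_k=e^{-t\norm{k}_{\ell_2(N)}^2}U_k$. The decisive point, as opposed to the non-reduced cases, is that since $Z$ (resp.\ the $Z_\tau$) is now a scalar and not a generator, each eigenvalue $e^{-t\norm{k}_{\ell_2(N)}^2}$ occurs with exactly its lattice multiplicity and no extra infinite degeneracy.

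First I would record that $\tr T(t)=\sum_{k\in\zn[N]}e^{-t\norm{k}_{\ell_2(N)}^2}=\bigl(\sum_{j\in\zn}e^{-tj^2}\bigr)^N$, which is a power of a Jacobi theta constant and in particular finite for every $t\in\rn_{>0}$; thus $T(t)$ is of trace class and no regularization is needed. For consistency of the symbol $\Htr_{A^N_\theta}$ one should also note, by the gauge-independence argument of Lemma~\ref{lemma-gauge-independence}, that for any generalized gauge $\Gf$ with $\Gf(0)=1$ the value $\zeta(T(t),\Gf)(0)$ equals this honest trace, so the $\zeta$-regularized and the genuine heat-trace coincide.

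Next I would compare with the classical flat torus. The operator $-\sum_{j=1}^N\d_j^2$ on $\rn[N]/{2\pi\zn[N]}$ has orthonormal eigenbasis $(2\pi)^{-N/2}e^{i\langle k,\cdot\rangle_{\ell_2(N)}}$, $k\in\zn[N]$, with eigenvalue $\norm{k}_{\ell_2(N)}^2$, so $\tr S(t)=\sum_{k\in\zn[N]}e^{-t\norm{k}_{\ell_2(N)}^2}$, which is term-by-term the same series as $\tr T(t)$. Since the two heat-traces agree as functions of $t$, they share the small-time expansion $(4\pi t)^{-N/2}\sum_{k\in\nn_0}t^{k/2}A_k$, whence $A_k(A^N_\theta)=A_k(\rn[N]/{2\pi\zn[N]})$ for all $k\in\nn_0$; by Poisson summation on the lattice, $A_0=\vol(\rn[N]/{2\pi\zn[N]})=(2\pi)^N$ and $A_k=0$ for $k\in\nn$, exactly as for a flat torus.

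I do not expect a real obstacle: the only point demanding care is the multiplicity bookkeeping, i.e.\ verifying (via the analogue of the spectrum lemma for operators $(\id\otimes L)\circ\Delta$) that $T(t)$ really is diagonalised with the plain lattice multiplicities. This is precisely the feature that separates the present case from the non-reduced one, in which the genuine generators $Z_\tau$ render every eigenvalue infinitely degenerate, forcing a $\zeta$-regularization of the divergent trace that outputs the factor $(-1)^{\Tf}$; here the trace is literally convergent, so the bare equality $\tr T(t)=\tr S(t)$ holds and the heat coefficients transfer directly. The remaining ingredient --- convergence of the theta series --- is routine.
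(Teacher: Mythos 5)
Your proof is correct and follows exactly the route the paper intends (the paper states this theorem without proof, as the analogue of the reduced Heisenberg case): the heat-semigroup is diagonal on the basis $(U_k)_{k\in\zn[N]}$ with eigenvalue $e^{-t\norm{k}_{\ell_2(N)}^2}$ occurring with plain lattice multiplicity, so its trace is the convergent theta series $\bigl(\sum_{j\in\zn}e^{-tj^2}\bigr)^N=\tr S(t)$ and the heat coefficients transfer directly. Your emphasis on the multiplicity bookkeeping --- that the absence of abstract twist generators $Z_\tau$ removes the infinite degeneracy and hence the need for $\zeta$-regularization --- is precisely the point distinguishing this case from the $\Ap[N]_\theta$ case in the paper.
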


\section{Gaussian invariants of $SU_q(2)$}\label{sec:SUq2}

Finally, we want to have a look at the quantum group $SU_q(2)$. This case is particularly interesting since there is a canonical choice of Brownian motion for the classical case $SU_1(2)=SU(2)$ but not necessarily for $SU_q(2)$ for $q\ne 1$. There is a unique driftless Gaussian (up to time scaling) on each $SU_q(2)$ for $q\ne 1$ which we will treat as the heat-semigroup even though it can not be the heat-semigroup on $SU(2)$. This will allow us to compute a $\zeta$-regularized trace and recover that $SU_q(2)$ is $3$-dimensional.

We will begin with a quick summary of Section 6.2 of \cite{timmermann}. In order to construct $SU_q(2)$, let us start with the compact Lie group 
\begin{align*}
  SU(2):=\l\{g_{\alpha,\gamma}:=
  \begin{pmatrix}
    \alpha&-\gamma^*\\
    \gamma&\alpha^*
  \end{pmatrix}\in B\l(\cn[2]\r);\ \alpha,\gamma\in\cn,\ \det g_{\alpha,\gamma}=1\r\}
\end{align*}
and define $a,c\in C(SU(2))$ by $a(g_{\alpha,\gamma}):=\alpha$ and $c(g_{\alpha,\gamma})=\gamma$. Then, the C*-algebra generated by $a$ and $c$ subject to $a^*a+c^*c=1$ is a C*-algebraic compact quantum group with co-multiplication $\Delta$ given by
\begin{align*}
  \Delta(a)=a\otimes a+c\otimes c\qquad\text{and}\qquad\Delta(c)=c\otimes a+a^*\otimes c,
\end{align*}
co-unit $\eps(a)=1$, $\eps(c)=0$, and antipode $S(a)=a^*$, $S(a^*)=a$, $S(c)=-c$, and $S(c^*)=-c^*$.

\begin{definition}
  Let $q\in[-1,1]\setminus\{0\}$. Then, we define $SU_q(2)$ to be the universal unital C*-algebra generated by elements $a$ and $c$ subject to the condition that
  \begin{align*}
    u:=
    \begin{pmatrix}
      a&-qc^*\\c&a^*
    \end{pmatrix}
  \end{align*}
  is unitary, i.e., 
  \begin{enumerate}

  \item[(i)] $a^*a+c^*c=1$
  \item[(ii)] $aa^*+q^2c^*c=1$
  \item[(iii)] $c^*c=cc^*$
  \item[(iv)] $ac=qca$
  \item[(v)] $ac^*=qc^*a$
  \end{enumerate}
  Furthermore, $SU_q(2)$ is endowed with the co-unit $\eps$ given by $\eps(a)=1$ and $\eps(c)=0$, and co-multiplication given by
  \begin{align*}
    \Delta(a)=a\otimes a-q c^*\otimes c\qquad\text{and}\qquad\Delta(c)=c\otimes a+a^*\otimes c.
  \end{align*}
\end{definition}

If we let $SU^0_q(2)\sse SU_q(2)$ denote the *-subalgebra generated by $a$ and $c$ and $\Delta_0$ and $\eps_0$ the co-multiplication $\Delta$ and co-unit $\eps$ restricted to this *-subalgebra then $(SU^0_q(2), \Delta_0,\eps_0 ) = (SU_q(2), \Delta,\eps)_0$ is the associated algebraic compact quantum group to $(SU_q(2), \Delta,\eps)$. 

Furthermore, the family $(a_{kmn})_{(k,m,n)\in\zn\times\nn_0\times\nn_0}$ defined as
\begin{align*}
  a_{k,m,n}:=
  \begin{cases}
    a^k(c^*)^m c^n&,\ k\in\nn_0\\
    (a^*)^{-k}(c^*)^m c^n&,\ k\in-\nn
  \end{cases}
\end{align*}
is a basis of $SU_q^0(2)$. 

The Gaussian generating functionals on $SU_q(2)$ for $|q|\in (0,1)$ are classified in~\cite{schurmann-skeide}. For the quantum group we have a family of characters $\eps_\phi:\ SU_q(2)\to \cn$ for such that
\begin{align*}
  \eps_\phi(a)=e^{i\phi}\quad \text{ and }\quad \eps_{\phi}(c)=0.
\end{align*}
This family of characters is pointwise continuous with respect to $\phi$ and satisfies $\eps_0=\eps$. On the basis $\{a_{kmn};\ (k,m,n)\in\zn\times\nn_0\times\nn_0\}$ of the dense subalgebra we have that $\eps_\phi(a_{kmn})=e^{ik\phi}\delta_{m+n,0}$ for $\phi\in \rn$. 

We will define linear functionals $\eps'(a_{kmn})=\d_\phi\eps_\phi(a_{kmn})|_{\phi=0}=ik\delta_{m+n,0}$ and $\eps''(a_{kmn})=\d_\phi^2\eps_\phi(a_{kmn})|_{\phi=0}=-k^2\delta_{m+n,0}$.

\begin{prop}
  All Gaussian generating functionals are of the form
  \begin{align*}
    L=r_D\eps'+r\eps''
  \end{align*}
  where $r_D\in \rn$ and $r\in \rn_{>0}$.
\end{prop}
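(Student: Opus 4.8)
\emph{Proof strategy.} Recall (cf.\ the Toeplitz example above) that a Sch\"urmann triple $(\rho,\eta,L)$ is Gaussian precisely when $\rho=\eps$; then $\rho(x)$ acts on the pre-Hilbert space $D$ as multiplication by the scalar $\eps(x)$, so the cocycle and generating identities simplify to $\eta(xy)=\eta(x)\eps(y)+\eps(x)\eta(y)$ and $L(xy)=L(x)\eps(y)+\eps(x)L(y)+\langle\eta(x^*),\eta(y)\rangle$. Starting from an arbitrary Gaussian generating functional $L$ on $SU_q(2)$, the plan is to determine $\eta$ and $L$ first on the generators $a,c$ by feeding the defining relations (i)--(v) into these identities, and then to propagate the values to the whole basis $\{a_{kmn}\}$ of $SU^0_q(2)$ by induction.

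\emph{Generators.} Applying $\eta$ to $ac=qca$ and using $\eps(a)=1$, $\eps(c)=0$ gives $\eta(c)=q\eta(c)$, hence $\eta(c)=0$ because $\abs q<1$; the relation $ac^*=qc^*a$ likewise forces $\eta(c^*)=0$. Applying $\eta$ to $a^*a+c^*c=1$ and using $\eta(1)=0$ together with $\eta(c^*c)=0$ yields $\eta(a^*)=-\eta(a)$; set $\xi:=\eta(a)$. The same relations applied to $L$ give $L(c)=qL(c)$, so $L(c)=0$ (and $L(c^*)=\overline{L(c)}=0$), while $a^*a+c^*c=1$ forces $2\Re L(a)+\norm\xi^2=0$. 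Put $r_D:=\Im L(a)\in\rn$ and $r:=-\Re L(a)=\tfrac12\norm\xi^2\ge 0$, so $L(a)=ir_D-r$ and $\norm\xi^2=2r$; note $r>0$ exactly when the cocycle is non-trivial, which is the relevant (non-degenerate) case.

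\emph{Propagation to the basis.} A routine induction along the cocycle identity, using $\eps(a_{kmn})=\delta_{m+n,0}$, shows $\eta(a_{kmn})=k\xi\,\delta_{m+n,0}$; in particular $\eta$ vanishes on every word of positive length in $c,c^*$. For $L$: if $m+n\ge 1$, write $a_{kmn}=X\cdot(c^*)^mc^n$ with $X$ a power of $a$ or $a^*$; since $\eps\big((c^*)^mc^n\big)=0$ and $\eta\big((c^*)^mc^n\big)=0$, the generating identity collapses to $L(a_{kmn})=L\big((c^*)^mc^n\big)$, and a short induction on word length using $L(c)=L(c^*)=0$, $\eps(c)=\eps(c^*)=0$, $\eta(c^*)=0$ shows this is $0$. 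For $m=n=0$ and $k\ge 1$, the generating identity with $\eta(a^*)=-\xi$ and $\eta(a^{k-1})=(k-1)\xi$ gives the recursion $L(a^k)=L(a^{k-1})+L(a)-(k-1)\norm\xi^2$, which sums to $L(a^k)=kL(a)-\binom k2\norm\xi^2=ir_Dk-rk^2$; the case $k<0$ follows by applying $L(x^*)=\overline{L(x)}$ to powers of $a^*$. Comparing with $\eps'(a_{kmn})=ik\delta_{m+n,0}$ and $\eps''(a_{kmn})=-k^2\delta_{m+n,0}$, we obtain $L(a_{kmn})=r_D\eps'(a_{kmn})+r\eps''(a_{kmn})$ on every basis element, hence $L=r_D\eps'+r\eps''$. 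Alternatively one may simply invoke the classification of Gaussian cocycles on $SU_q(2)$ in \cite{schurmann-skeide} and translate it into the $\eps',\eps''$ language.

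\emph{Main obstacle.} The crux is the first structural step: observing that the $q$-commutation relations with $q\ne 1$ annihilate the cocycle in the $c$-direction, $\eta(c)=\eta(c^*)=0$, which is exactly what collapses the a priori large space of Gaussian functionals onto the two parameters $r_D$ (drift) and $r$ (Brownian part). Everything else is bookkeeping with the twisted Leibniz rules, where the only thing to watch is the order of the factors in $\eta(xy)=\eta(x)\eps(y)+\eps(x)\eta(y)$ and the placement of the inner-product term in $L(xy)=L(x)\eps(y)+\eps(x)L(y)+\langle\eta(x^*),\eta(y)\rangle$.
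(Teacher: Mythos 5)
Your proposal is correct, and it is genuinely more than what the paper provides: the paper states this proposition without proof, relying entirely on the citation to Sch\"urmann--Skeide for the classification of Gaussian generating functionals on $SU_q(2)$ (a fallback you also mention at the end). Your direct derivation is sound at every step I checked: applying the $\rho=\eps$ cocycle identity to $ac=qca$ and $ac^*=qc^*a$ does force $\eta(c)=\eta(c^*)=0$ when $q\ne 1$, the relation $a^*a+c^*c=1$ gives $\eta(a^*)=-\eta(a)$ and $2\Re L(a)+\norm{\xi}^2=0$, and the inductions along the (twisted) Leibniz rules correctly yield $\eta(a_{kmn})=k\xi\,\delta_{m+n,0}$ and $L(a^k)=kL(a)-\binom{k}{2}\norm{\xi}^2=ir_Dk-rk^2$, matching $r_D\eps'+r\eps''$ on the whole basis. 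What your approach buys is a self-contained proof and, more importantly, it isolates the structural reason the Gaussian cone is only two-dimensional (the $q$-commutation relations kill the cocycle in the $c$-direction), which the citation obscures. Two small points worth flagging: your argument naturally produces $r\ge 0$, with $r=0$ corresponding to the degenerate pure-drift functional $L=r_D\eps'$ (which is still a Gaussian generating functional, so strictly speaking the proposition's claim $r\in\rn_{>0}$ should read $r\ge 0$, or the degenerate case should be explicitly excluded --- you note this but it deserves a sentence); and you should record $\eta(1)=0$, $L(1)=0$ (immediate from the identities applied to $1\cdot 1$) before using them.
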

By the definition of drift, the parameter $r_D$ contributes only to drift so we will only consider $r_D=0$. This leaves only positive multiples of $\eps''$.

\begin{prop}
  The operator $T_L:=(\id\otimes \eps'')\circ\Delta:\ SU_q(2)\to SU_q(2)$ takes the following values on $\{a_{kmn};\ (k,m,n)\in\zn\times\nn_0\times\nn_0\}$
  \begin{align*}
    T_L(a_{kmn})=-(k-m+n)^2a_{kmn}.
  \end{align*}
\end{prop}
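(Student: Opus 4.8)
The statement is a direct computation: we need to evaluate $(\id\otimes\eps'')\circ\Delta$ on the basis element $a_{kmn}$. The only input we need is the behavior of $\Delta$ on $a$, $c$, $a^*$, $c^*$ together with the explicit form $\eps''(a_{kmn})=-k^2\delta_{m+n,0}$. The overall strategy is: (i) compute $\Delta(a_{kmn})$ far enough to isolate, in the second tensor leg, exactly those terms on which $\eps''$ does not vanish; (ii) apply $\eps''$; (iii) read off the coefficient, which should be $-(k-m+n)^2$, and check that the first tensor leg of each surviving term is again $a_{kmn}$.

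First I would observe that since $\eps''$ kills every basis element $a_{k'm'n'}$ with $m'+n'>0$, the only contribution comes from terms in $\Delta(a_{kmn})$ whose right leg is a power of $a$ or $a^*$ (i.e.\ lies in the subalgebra generated by $a,a^*$, the "$c$-degree zero part"). So the efficient approach is to work modulo the ideal-like filtration by total $c$-degree: write $\Delta(a)=a\otimes a + (\text{terms with }c\text{ on the right})$, $\Delta(c)=a^*\otimes c + (\text{terms with }c\text{ on the right})$, and similarly $\Delta(a^*) = a^*\otimes a^* + (\cdots)$, $\Delta(c^*) = c^*\otimes a + (\cdots)$ — here one uses that $\Delta$ is a $*$-homomorphism, so $\Delta(a^*)=\Delta(a)^*$ and $\Delta(c^*)=\Delta(c)^*$. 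Then for $a_{kmn}=a^k(c^*)^mc^n$ (the case $k\ge0$; the case $k<0$ is identical with $a^k$ replaced by $(a^*)^{-k}$), multiplicativity of $\Delta$ gives that the part of $\Delta(a_{kmn})$ with right leg of $c$-degree zero is
\begin{align*}
  (a\otimes a)^k\,(c^*\otimes a)^m\,(a^*\otimes c)^n \big|_{\text{right leg }c\text{-degree }0} = (\text{something})\otimes a^k a^m (a^*)^{n}\big|_{c\text{-deg }0},
\end{align*}
but we must be careful: $(a^*\otimes c)^n$ contributes $c^n$ on the right, which has $c$-degree $n>0$ unless $n=0$. This is the subtlety: the naive "$c$-degree zero" truncation is too crude because $\eps''$ is nonzero on $a_{kmn}$ precisely when $m+n=0$, i.e.\ when $m=n=0$, but the operator value is claimed to be nonzero for all $k,m,n$. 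The resolution is that $\eps''$ is applied to the whole right leg, which is a product like $a^k a^{?}(a^*)^{?}$ assembled from the cross terms, and the relations (ii),(iv),(v) of $SU_q(2)$ let one rewrite products $a^j(a^*)^l$ in the basis; only the $c$-degree-zero survivor $a^{j-l}$ (or $(a^*)^{l-j}$) matters after $\eps''$.

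Concretely, the key computational step is: expand $\Delta(a_{kmn})$, collect the monomials in which the right-hand tensor factor, after using the commutation relations, reduces to a pure power of $a$ or $a^*$, and track the exponent. I expect the answer to come out as: the right leg runs over powers $a^{k-m+n-2j}$ for suitable $j$, but only... actually the cleaner route is to use the grading: $SU_q(2)$ carries a $\mathbb Z$-grading (the circle action) under which $a$ has degree $+1$, $a^*$ has degree $-1$, $c$ has degree $+1$, $c^*$ has degree $-1$; then $a_{kmn}$ has degree $k-m+n$, and $\Delta$ is grading-preserving in the sense that it maps the degree-$d$ part into the sum of (degree-$d_1$)$\otimes$(degree-$d_2$) with $d_1+d_2=d$. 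Since $\eps''$ annihilates everything except the degree-$0$-in-$c$ part and on $a^\ell$ (degree $\ell$) takes value $\eps_\phi(a^\ell)=e^{i\ell\phi}$, hence $\eps''(a^\ell)=-\ell^2$ and $\eps''((a^*)^\ell)=-\ell^2$, the upshot is that $(\id\otimes\eps'')\circ\Delta$ acts on the degree-$d$ eigenspace (with $d=k-m+n$) as multiplication by $-d^2$, because $\eps_\phi$ is a character with $\eps_\phi(a_{kmn})=e^{id\phi}\delta_{m+n,0}$ and differentiating the identity $(\id\otimes\eps_\phi)\circ\Delta=(\text{the circle action }\alpha_\phi)$ twice at $\phi=0$ gives exactly $(\id\otimes\eps'')\circ\Delta = \partial_\phi^2\alpha_\phi|_{\phi=0}$, whose value on $a_{kmn}$ is $-d^2 a_{kmn}$.

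**Main obstacle.** The only real work is justifying that $(\id\otimes\eps_\phi)\circ\Delta$ equals the circle-action automorphism $\alpha_\phi$ (equivalently, that $\eps_\phi$ generates the gauge action by convolution), which one gets by checking it on the generators $a$ and $c$ using the explicit formulas for $\Delta(a),\Delta(c)$ and $\eps_\phi(a)=e^{i\phi}$, $\eps_\phi(c)=0$, and then extending by multiplicativity; and then differentiating under the (finite, on the dense subalgebra) sums twice in $\phi$ — legitimate since on each basis vector $\phi\mapsto(\id\otimes\eps_\phi)\circ\Delta(a_{kmn})$ is just $e^{i(k-m+n)\phi}a_{kmn}$, a smooth scalar-valued curve. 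I would therefore present the proof in two lines: note $(\id\otimes\eps_\phi)\circ\Delta(a_{kmn})=e^{i(k-m+n)\phi}a_{kmn}$ (check on generators, extend multiplicatively), differentiate twice at $\phi=0$ to get $T_L(a_{kmn})=(\id\otimes\eps'')\circ\Delta(a_{kmn})=-(k-m+n)^2a_{kmn}$. If one instead insists on a bare-hands expansion of $\Delta(a_{kmn})$, the bookkeeping of cross terms and $q$-powers is the tedious part, but all the $q$-factors are units and cancel against each other in the degree-zero leg, so they do not affect the final coefficient.
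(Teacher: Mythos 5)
Your final argument is correct, but it takes a genuinely different route from the paper's. The paper expands $\Delta(a_{kmn})$ as the product $(a\otimes a-qc^*\otimes c)^k(c^*\otimes a^*+a\otimes c^*)^m(c\otimes a+a^*\otimes c)^n$, observes that the only term whose right leg survives $\eps''$ is $a^kc^{*m}c^n\otimes a^ka^{*m}a^n$, and then uses the relations $aa^*+q^2c^*c=1$ and $a^*a+c^*c=1$ to reduce $a^ka^{*m}a^n$ to $a^{k-m+n}$ plus terms with $c$, which $\eps''$ kills; applying $\eps''(a^{k-m+n})=-(k-m+n)^2$ finishes it. You instead identify $(\id\otimes\eps_\phi)\circ\Delta$ with the gauge action: since $\id\otimes\eps_\phi$ composed with the $*$-homomorphism $\Delta$ is again a $*$-homomorphism sending $a\mapsto e^{i\phi}a$ and $c\mapsto e^{i\phi}c$, multiplicativity gives $(\id\otimes\eps_\phi)\Delta(a_{kmn})=e^{i(k-m+n)\phi}a_{kmn}$, and differentiating this scalar identity twice at $\phi=0$ (legitimate because $\Delta(a_{kmn})$ is a finite sum of basis tensors and $\eps''$ is defined basiswise by $\d_\phi^2\eps_\phi|_{\phi=0}$) yields the claim. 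Your route avoids all commutation-rule bookkeeping, at the price of first verifying multiplicativity of $(\id\otimes\eps_\phi)\circ\Delta$ and the interchange of derivative and finite sum, both of which you address. Two slips in the expansion you discard: the $c$-degree-zero leading terms are $\Delta(c)=c\otimes a+\dots$ (not $a^*\otimes c+\dots$) and $\Delta(c^*)=c^*\otimes a^*+\dots$ (not $c^*\otimes a+\dots$); had you pursued that truncation it would have produced the wrong exponent $k+m-n$, but these errors play no role in the character argument you actually commit to.
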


\begin{proof}

  First note that 
  \begin{align*}
    \Delta(a_{kmn})=(a\otimes a-qc^*\otimes c)^k(c^*\otimes a^*+a\otimes c^*)^m(c\otimes a+a^*\otimes c)^n
  \end{align*}
  for all $(k,m,n)\in \zn\times \nn_0\times \nn_0$ and, using the commutation rules, we can simplify so that
  \begin{align*}
    a^ka^{*m}a^n=a^{k-m+n}+\text{ terms with } c.
  \end{align*}
  Then, by applying $\eps''$ to the right leg of the tensor product, we observe that the only non-zero term is given by $a^{k-m+n}$, i.e.,
  \begin{align*}
     \ T_L(a_{kmn})=a^kc^{*m}c^n\epsilon''(a^ka^{*m}a^n)=-(k-m+n)^2a_{kmn}.
  \end{align*}
  for all $(k,m,n)\in\zn\times \nn_0\times \nn_0$.
\end{proof}

\begin{corollary}
  Let $\Gf$ be a gauged polyhomogeneous operator with
  \begin{align*}
    \fa (k,m,n)\in\zn\times\nn_0\times\nn_0:\ \Gf(z)(a_{kmn})=\sum_{\iota\in I}\alpha_\iota(z)\sigma_{d_\iota+\delta z}((k,m,n))a_{kmn}.
  \end{align*}
  Then, $\Gf(z)$ is of trace class if $\fa\iota\in I:\ \Re(d_\iota+\delta z)<-3$ and the $\zeta$-function $\zeta(\Gf)$ defined by meromorphic extension of $z\mapsto\tr\Gf(z)$ has at most isolated first order poles in the set $\l\{\frac{-3-d_\iota}{\delta};\ \iota\in I\r\}$. Furthermore, the lowest order Laurent coefficient is tracial.
\end{corollary}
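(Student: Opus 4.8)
The plan is to proceed exactly as in Theorems~\ref{thm:existence-zeta-function}, \ref{thm:existence-zeta-function-heisenberg} and~\ref{thm:existence-zeta-function-nc-torus}: the hypothesis already tells us that $\Gf(z)$ is diagonalized by the basis $\{a_{kmn};\ (k,m,n)\in\zn\times\nn_0\times\nn_0\}$ with eigenvalue $\lambda_{kmn}(z)=\sum_{\iota\in I}\alpha_\iota(z)\sigma_{d_\iota+\delta z}((k,m,n))$, so the whole statement reduces to the analytic behaviour of a lattice sum of homogeneous functions over the cone $\zn\times\nn_0\times\nn_0$, which I would control by comparison with the classical pseudo-differential $\zeta$-calculus on a $3$-torus. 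First I would fix that the relevant Hilbert space is the GNS space of the Haar state on $SU_q(2)$, in which the $a_{kmn}$ are pairwise orthogonal; a diagonal operator in this basis is then trace class precisely when its eigenvalue sequence is absolutely summable, in which case its trace is the sum of its eigenvalues. A trace-class summand $\Gf_0$ (if the $SU_q(2)$ notion of a gauged polyhomogeneous operator admits one) only adds a summand holomorphic on the relevant half-space and may be discarded.

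For the trace-class assertion I would use homogeneity together with boundedness of each $\sigma_{d_\iota+\delta z}$ on the unit sphere to obtain $\abs{\sigma_{d_\iota+\delta z}((k,m,n))}\le C_\iota(z)\norm{(k,m,n)}^{\Re(d_\iota+\delta z)}$ for $(k,m,n)\ne0$, and then compare $\sum_{0\ne(k,m,n)\in\zn\times\nn_0\times\nn_0}\norm{(k,m,n)}^{s}$ with $\int_{\norm\xi\ge1}\norm\xi^{s}\,d\xi$ taken over the cone $\mathcal{C}:=\rn\times\rn_{\ge0}\times\rn_{\ge0}$, which converges if and only if $\Re(s)<-3$; finiteness of $I$ then gives the first assertion and, on the corresponding half-space, $\tr\Gf(z)=\sum_{(k,m,n)}\lambda_{kmn}(z)$.

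The meromorphic continuation is the heart of the matter, and the one genuine difference from the earlier cases is that the index set is the cone $\zn\times\nn_0\times\nn_0$ rather than a full lattice $\zn[3]$. For a single homogeneous term $\sigma_s$ with $s=d_\iota+\delta z$, I would compare, by Euler--Maclaurin (equivalently Poisson summation in the interior of the cone), the cone sum $\sum_{0\ne(k,m,n)\in\mathcal{C}\cap\zn[3]}\sigma_s((k,m,n))$ with the cone integral, evaluated in polar coordinates as
\[
  \int_{\mathcal{C},\,\norm\xi\ge\frac12}\sigma_s(\xi)\,d\xi=\l(\int_{S^2\cap\mathcal{C}}\sigma_s(\omega)\,d\omega\r)\int_{\frac12}^\infty r^{s+2}\,dr=-\l(\int_{S^2\cap\mathcal{C}}\sigma_s(\omega)\,d\omega\r)\frac{(1/2)^{s+3}}{s+3},
\]
which is meromorphic in $s$ with a single simple pole at $s=-3$ and residue a constant multiple of the spherical integral of $\sigma_{-3}$. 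Substituting $s=d_\iota+\delta z$, multiplying by the holomorphic weights $\alpha_\iota(z)$ and summing over the finite set $I$ yields the meromorphic extension $\zeta(\Gf)$ on a half-space $\cn_{\Re(\cdot)<R}$ with at most isolated simple poles at $z\in\l\{\frac{-3-d_\iota}{\delta};\ \iota\in I\r\}$; the Euler--Maclaurin remainder is supported on the boundary faces, edges and vertex of $\mathcal{C}$ and is handled by the same computation in dimensions $2$, $1$ and $0$, contributing only strictly lower-order terms. Finally, traciality of the lowest order Laurent coefficient follows by the same two-step argument used for the Toeplitz algebra: any two polyhomogeneous operators on $SU_q(2)$ are diagonal in the common basis $\{a_{kmn}\}$, hence commute, and gauge independence of the lowest order Laurent coefficient is obtained exactly as in Lemma~\ref{lemma-gauge-independence}, by observing that for gauges $\Gf,\Hf$ with $\Gf(0)=\Hf(0)$ the family $\If(z):=\frac{\Gf(z)-\Hf(z)}{z}$ is again a gauge, so that $z^{l-1}\l(\zeta(\Gf)(z)-\zeta(\Hf)(z)\r)$ is holomorphic at $0$ and the lowest order coefficient depends only on $\Gf(0)$.

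The main obstacle I anticipate is precisely the cone structure of the index set: unlike the torus cases, $\zn\times\nn_0\times\nn_0$ is not a group, so one cannot simply invoke the torus pseudo-differential $\zeta$-calculus verbatim but must run the Euler--Maclaurin comparison and keep track of the lower-dimensional boundary strata of $\mathcal{C}$; the accompanying bookkeeping (and the choice of the Hilbert space in which the ``trace'' of operators on the C*-algebra $SU_q(2)$ is taken, so that ``diagonal with absolutely summable eigenvalues $\Rightarrow$ trace class'' is legitimate) is where the care is needed, the rest being formally identical to the cases already treated.
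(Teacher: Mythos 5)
Your route is genuinely different from the paper's. The proof given in the paper is a one-line reduction in the style of Theorems~\ref{thm:existence-zeta-function}, \ref{thm:existence-zeta-function-heisenberg} and~\ref{thm:existence-zeta-function-nc-torus}: the spectrum of $\Gf(z)$ is identified with that of the classical operator $\sum_{\iota\in I}\alpha_\iota(z)\sigma_{d_\iota+\delta z}\l(\l(i\d_1,\frac{\abs{\d_2}}{2},\frac{\abs{\d_3}}{2}\r)\r)$ on $\rn[3]/{2\pi\zn[3]}$, after which the established pseudo-differential calculus on the $3$-torus is invoked; the cone $\zn\times\nn_0\times\nn_0$ is thus traded for the full lattice $\zn[3]$ and never analysed directly. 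You instead keep the cone and run an Euler--Maclaurin comparison against the cone integral. Your polar-coordinate evaluation of the radial integral, the trace-class threshold $\Re(d_\iota+\delta z)<-3$, the remarks about the diagonal sense in which the trace is taken, and the traciality argument via commutativity plus the gauge-independence trick of Lemma~\ref{lemma-gauge-independence} are all correct and considerably more explicit than anything in the paper, and you rightly single out the cone structure as the one genuine novelty of this case.

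The gap is exactly where you wave at it: ``the Euler--Maclaurin remainder \ldots contributes only strictly lower-order terms.'' Lower order in the small-$t$ asymptotics, yes; invisible in the pole structure, no. The face, edge and vertex contributions are themselves lattice zeta functions in dimensions $2$, $1$ and $0$, and therefore produce simple poles at $z=\frac{-2-d_\iota}{\delta}$ and $z=\frac{-1-d_\iota}{\delta}$, which do not lie in the asserted set $\l\{\frac{-3-d_\iota}{\delta};\ \iota\in I\r\}$. Concretely, for the single symbol $\sigma_z=\norm\cdot_{\ell_2(3)}^{z}$ one finds
\begin{align*}
  \sum_{0\ne\mu\in\zn\times\nn_0\times\nn_0}\norm\mu_{\ell_2(3)}^{z}=\tfrac14E_3(z)+\tfrac12E_2(z)+\tfrac12\zeta_R(-z),
\end{align*}
where $E_n$ denotes the $n$-dimensional Epstein zeta function; the right-hand side has simple poles at $z=-3$, $z=-2$ and $z=-1$, not only at $z=-3$. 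So you must either show that the residues at the sub-leading locations vanish (they do not in general), or weaken the conclusion to poles in $\l\{\frac{-j-d_\iota}{\delta};\ j\in\{1,2,3\},\ \iota\in I\r\}$, or restrict the domain of the meromorphic extension so as to exclude the sub-leading points. The paper's proof arrives at the stated pole set only because the substitution $(m,n)\mapsto\l(\frac{\abs m}{2},\frac{\abs n}{2}\r)$ replaces the cone by the full lattice, on which Poisson summation leaves a single pole per degree of homogeneity --- an identification that silently discards exactly the boundary strata you would have to confront (and which does not preserve multiplicities, since coordinatewise halving is not joint homogeneity). Your method is the more honest one; as written, however, it does not establish the corollary in the form stated.
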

\begin{proof}
  This, again, follows directly from the fact that the spectrum of $\Gf(z)$ and $\sum_{\iota\in I}\alpha_\iota(z)\sigma_{d_\iota+\delta z}\l(\l(i\d_1,\frac{\abs{\d_2}}{2},\frac{\abs{\d_3}}{2}\r)\r)$ on $\rn[3]/{2\pi\zn[3]}$ coincide.
\end{proof}

Now taking the operator exponentiation we see that the operator semigroup $T(t):\ SU_q(2)\to SU_q(2)$ associated with $r\eps''$ is given by 
\begin{align*}
  T(t)(a_{kmn})=e^{-rt(k-m+n)^2}a_{kmn}
\end{align*}
on the basis $\{a_{kmn};\ (k,m,n)\in\zn\times\nn_0\times\nn_0\}$. It is important to note that this is not the heat-semigroup on $SU(2)$. Since $SU(2)$ is a compact Riemannian $C^\infty$-manifold without boundary, its heat-semigroup is a semigroup of trace class operators but each $T(t)$ above has multiplicity $\aleph_0$ for each of its eigenvalues. In other words, none of the $T(t)$ is compact.

\begin{theorem}\label{thm:gaussian-trace-SUq(2)}
  Let $T$ be a driftless Gaussian semigroup on $SU_q(2)$, i.e.,
  \begin{align*}
    \fa (k,m,n)\in\zn\times\nn[2]_0\ \fa t\in\rn_{>0}:\ T(t)(a_{kmn})=e^{-rt(k-m+n)^2}a_{kmn},
  \end{align*}
  and $\l(\Gf_t(z)\r)_{z\in\cn[3]}$ a holomorphic family of operators on $SU_q(2)$ satisfying
  \begin{align*}
    \Gf_t(z)a_{kmn}=e^{-rt(k+m-n)^2}\abs{k}^{\delta_1z_1}m^{\delta_2z_2}n^{\delta_3z_3}
  \end{align*}
  for all $z\in\cn[3]$, $(k,m,n)\in\zn\times\nn_0\times\nn_0$, and $t\in\rn_{>0}$, where $\delta_1,\delta_2,\delta_3\in\rn_{>0}$. Then,
  \begin{align*}
    \tr_\zeta(T(t))=&\zeta(\Gf_t)(0)=\frac{1}{12}+\frac{13}{12}\sum_{k\in\nn}e^{-rtk^2}+\frac{13}{12}\sum_{k\in\nn}ke^{-rtk^2}-\sum_{k\in\nn}k^2e^{-rtk^2}
  \end{align*}
  and
  \begin{align*}
    \lim_{t\searrow0}(4\pi t)^{\frac{3}{2}}\tr_\zeta(T(t))=-2\pi^2r^{-\frac{3}{2}}.
  \end{align*}
\end{theorem}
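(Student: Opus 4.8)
The plan is to regard $\Gf_t$ as a generalized gauge for $T(t)$ (now carrying three spectral parameters) and to compute $\zeta(\Gf_t)(0)$ by writing the trace as an explicit series over the basis $\{a_{kmn}\}$ and then taking the limit $z\to0$ as an iterated limit, one parameter at a time. Because $0^{\delta_iz_i}$ vanishes identically, only indices with $k\in\zn\setminus\{0\}$ and $m,n\in\nn$ contribute, so that for $\Re z_1,\Re z_2,\Re z_3$ sufficiently negative
\begin{align*}
  \tr\Gf_t(z)=\sum_{k\ne0}\sum_{m\ge1}\sum_{n\ge1}e^{-rt(k-m+n)^2}\abs{k}^{\delta_1z_1}m^{\delta_2z_2}n^{\delta_3z_3}.
\end{align*}
Existence of the meromorphic continuation, and the fact that the order of the $z_i\to0$ limits is irrelevant, are supplied respectively by the preceding corollary (comparison with a pseudo-differential operator on $\rn[3]/{2\pi\zn[3]}$) and by the multivariable identity theorem already invoked in Section~\ref{sec:heat}. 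Substituting $w=k-m+n$ makes the $k$-sum into $\sum_{w\in\zn}e^{-rtw^2}\abs{w-(m-n)}^{\delta_1z_1}$, an entire function of $z_1$ equal at $z_1=0$ to $\sum_{w\in\zn}e^{-rtw^2}-e^{-rt(m-n)^2}$; interchanging this limit with the $m,n$ summations (the Gaussian factor giving the needed uniform bounds) leaves
\begin{align*}
  \Big(\sum_{w\in\zn}e^{-rtw^2}\Big)\zeta_R(-\delta_2z_2)\zeta_R(-\delta_3z_3)-\sum_{m,n\ge1}e^{-rt(m-n)^2}m^{\delta_2z_2}n^{\delta_3z_3}.
\end{align*}

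The remaining double sum is treated by splitting along the value of $m-n$: the diagonal contributes $\zeta_R(-\delta_2z_2-\delta_3z_3)$, and each shift $l\ge1$ contributes $e^{-rtl^2}$ times a pair of Hurwitz-type series $\sum_{n\ge1}(n+l)^sn^{s'}$ and $\sum_{n\ge1}(n+l)^{s'}n^s$. The decisive local fact is that each of these is meromorphic near the origin with a simple pole along $s+s'=0$, but the two poles cancel in the sum, so that the pair combines into a function holomorphic at $0$; letting $z_2,z_3\to0$ then only feeds in $\zeta_R(0)=-\frac12$, $\zeta_R(-1)=-\frac1{12}$, $\zeta_R(-2)=0$ (the last being why no $\sum_kk^3e^{-rtk^2}$ survives) and collapses the partial sums produced by the two $\nn_0$-variables, which I expect to leave a polynomial in $l$ whose degree-two part is what eventually produces $\sum_kk^2e^{-rtk^2}$. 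Collecting the coefficients of $1$, $\sum_{k\in\nn}e^{-rtk^2}$, $\sum_{k\in\nn}ke^{-rtk^2}$ and $\sum_{k\in\nn}k^2e^{-rtk^2}$ should yield $\frac1{12}+\frac{13}{12}\sum_{k\in\nn}e^{-rtk^2}+\frac{13}{12}\sum_{k\in\nn}ke^{-rtk^2}-\sum_{k\in\nn}k^2e^{-rtk^2}$.

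The asymptotics then follow as in the proof of Theorem~\ref{thm:heat-trace-toeplitz}(iii). After multiplying by $(4\pi t)^{3/2}$, the constant is $O(t^{3/2})$, the term $\sum_{k\in\nn}e^{-rtk^2}=O(t^{-1/2})$ contributes $O(t)$, and $\sum_{k\in\nn}ke^{-rtk^2}=O(t^{-1})$ contributes $O(t^{1/2})$, so all of these vanish as $t\searrow0$. For the leading term, $x\mapsto x^2e^{-rtx^2}$ is unimodal with maximum $e^{-1}(rt)^{-1}$, so the integral comparison test gives $\sum_{k\in\nn}k^2e^{-rtk^2}=\int_0^\infty x^2e^{-rtx^2}\,dx+O\big((rt)^{-1}\big)=\frac{\sqrt\pi}{4}(rt)^{-3/2}+O(t^{-1})$, whence $\lim_{t\searrow0}(4\pi t)^{3/2}\tr_\zeta(T(t))=-(4\pi)^{3/2}\frac{\sqrt\pi}{4}r^{-3/2}=-2\pi^2r^{-3/2}$. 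The main obstacle is the middle paragraph: carrying the triple sum through the three successive limits, making precise the cancellation of the $s+s'=0$ poles between the paired Hurwitz series, and verifying the uniform estimates that license each interchange of $\lim_{z_i\to0}$ with the surviving summations — the Gaussian weight $e^{-rt(\cdot)^2}$ being exactly what keeps the intermediate series summable and the $k$-sum in closed form.
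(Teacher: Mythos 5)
Your overall architecture --- writing the trace as a triple series over the basis, reducing to special values of $\zeta_R$ via iterated limits, and using the integral comparison test for the $t\searrow0$ asymptotics --- is the same as the paper's, and your final paragraph on the asymptotics is correct \emph{given} the stated formula. But the middle paragraph, which you yourself flag as the obstacle, is not merely incomplete: carried out as you prescribe, it produces a different answer. The problem is the order of the iterated limits. You send $z_1\to0$ first, which collapses the $k$-sum to $\sum_{w\in\zn}e^{-rtw^2}-e^{-rt(m-n)^2}$ and thereby severs the coupling between $k$ and $m-n$. What then remains is $\bigl(\sum_{w\in\zn}e^{-rtw^2}\bigr)\zeta_R(-\delta_2z_2)\zeta_R(-\delta_3z_3)$ minus $\sum_{m,n\ge1}e^{-rt(m-n)^2}m^{\delta_2z_2}n^{\delta_3z_3}$, and for the latter your paired Hurwitz series have iterated limits $\lim\sum_{n\ge1}(n+l)^{\delta_2z_2}n^{\delta_3z_3}=\zeta_R(0)=-\tfrac12$ and $\lim\sum_{n\ge1}(n+l)^{\delta_3z_3}n^{\delta_2z_2}=\zeta_R(0)-l=-\tfrac12-l$, so each pair contributes $-1-l$ --- linear in $l$, not quadratic. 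Collecting everything along your route gives $\tfrac34+\tfrac32\sum_{k\in\nn}e^{-rtk^2}+\sum_{k\in\nn}ke^{-rtk^2}$: there is no $\sum_kk^2e^{-rtk^2}$ term at all, this expression is $O(t^{-1})$, and $(4\pi t)^{3/2}$ times it tends to $0$ rather than to $-2\pi^2r^{-3/2}$. Your expectation that the Hurwitz pairs leave ``a polynomial in $l$ whose degree-two part produces $\sum_kk^2e^{-rtk^2}$'' is therefore not borne out within your own decomposition.

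The underlying gap is the appeal to the identity theorem to permute the limits: the multi-variable continuation is singular along hyperplanes through the origin (e.g.\ $\delta_2z_2+\delta_3z_3=0$, coming from $\zeta_R(1-\delta_2z_2-\delta_3z_3)$-type terms in the expansions of $\sum_n(n+l)^{\delta_2z_2}n^{\delta_3z_3}$), so it is not holomorphic in a full neighbourhood of $0$ and iterated limits taken in different orders genuinely disagree. The paper instead expands $(k-m+n)^2=k^2+2k(n-m)+(n-m)^2$ and sends $z_2\to0$ \emph{first}, with $k$ held as a live parameter; this keeps the factors $e^{\pm2rtk\ell}$ (with $\ell=n-m$) in play, and the weight $(2\ell+1)$ produced by the $z_2$-regularization then multiplies $e^{-rt(k\pm\ell)^2}$, so that re-indexing $\sum_{k,\ell\in\nn}e^{-rt(k+\ell)^2}\ell=\tfrac12\sum_{m}m^2e^{-rtm^2}-\tfrac12\sum_{m}me^{-rtm^2}$ (together with the analogous regularized $k-\ell$ sums, evaluated as in Theorem~\ref{thm:heat-trace-toeplitz}) is exactly what generates the $-\sum_kk^2e^{-rtk^2}$ term carrying the $t^{-3/2}$ asymptotics. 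To repair the argument you would need to adopt that order (or otherwise fix and justify a specific path of approach to $0$) and actually carry the bookkeeping through; as written, ``collecting the coefficients should yield'' conceals both the missing computation and the fact that your chosen limit order yields incompatible coefficients.
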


\begin{proof}
  In order to show 
  \begin{align*}
    \tr_\zeta(T(t))=\zeta(\Gf_t)(0)=\frac{1}{12}+\frac{13}{12}\sum_{k\in\nn}e^{-rtk^2}+\frac{13}{12}\sum_{k\in\nn}ke^{-rtk^2}-\sum_{k\in\nn}k^2e^{-rtk^2},
  \end{align*}
  we need to compute the limit $z\to0$ of
  \begin{align*}
    \sum_{k\in\zn}\sum_{m\in\nn}\sum_{n\in\nn}e^{-rt(k-m+n)^2}\abs{k}^{\delta_1z_1}m^{\delta_2z_2}n^{\delta_3z_3}
  \end{align*}
  which we can alternatively write as
  \begin{align*}
    \sum_{k\in\zn}e^{-rtk^2}\abs{k}^{\delta_1z_1}\sum_{m\in\nn}\sum_{n\in\nn}e^{-rt(n-m)^2}e^{-2rtk(n-m)}m^{\delta_2z_2}n^{\delta_3z_3}.
  \end{align*}
  The inner two series are very similar to the heat-trace on the Toeplitz algebra - there is simply an additional factor $e^{-2rtk(n-m)}$ now. Hence, we will treat these series in a similar fashion.
  \begin{align*}
    \sum_{m\in\nn}\sum_{n\in\nn}e^{-rt(n-m)^2}e^{-2rtk(n-m)}m^{\delta_2z_2}n^{\delta_3z_3}=\sum_{m\in\nn}\sum_{\ell\in\zn_{>-m}}e^{-rt\ell^2}e^{-2rtk\ell}m^{\delta_2z_2}(m+\ell)^{\delta_3z_3}
  \end{align*}
  which allows us to change gauge with respect to $z_3$ to obtain for $\Re(z_2)\ll0$ and we obtain
  \begin{align*}
    \sum_{m\in\nn}\sum_{\ell\in\zn_{>-m}}e^{-rt\ell^2}e^{-2rtk\ell}m^{\delta_2z_2}\abs{\ell}^{\delta_3z_3}.
  \end{align*}
  Let 
  \begin{align*}
    A:=\sum_{m\in\nn}\sum_{\ell=1-m}^0e^{-rt\ell^2}e^{-2rtk\ell}m^{\delta_2z_2}\abs{\ell}^{\delta_3z_3}
  \end{align*}
  and 
  \begin{align*}
    B:=\sum_{m\in\nn}\sum_{\ell\in\nn}e^{-rt\ell^2}e^{-2rtk\ell}m^{\delta_2z_2}\abs{\ell}^{\delta_3z_3}.
  \end{align*}
  Then, we obtain
  \begin{align*}
    \lim_{z_2\to0}A=&\lim_{z_2\to0}\sum_{m\in\nn}\sum_{\ell=0}^{m-1}e^{-rt\ell^2}e^{2rtk\ell}m^{\delta_2z_2}\ell^{\delta_3z_3}\\
    =&\lim_{z_2\to0}\sum_{\ell\in\nn_0}\sum_{m\in\nn_{>\ell}}e^{-rt\ell^2}e^{2rtk\ell}m^{\delta_2z_2}\ell^{\delta_3z_3}\\
    =&\lim_{z_2\to0}\sum_{\ell\in\nn_0}e^{-rt\ell^2}e^{2rtk\ell}\l(\zeta_R(-\delta_2z_2)-\sum_{m=1}^\ell m^{\delta_2z_2}\r)\ell^{\delta_3z_3}\\
    =&-\frac{1}{2}\sum_{\ell\in\nn_0}e^{-rt\ell^2}e^{2rtk\ell}\l(2\ell+1\r)\ell^{\delta_3z_3}
  \end{align*}
  and
  \begin{align*}
    \lim_{z_2\to0}B=&\sum_{m\in\nn}\sum_{\ell\in\nn}e^{-rt\ell^2}e^{-2rtk\ell}m^{\delta_2z_2}\ell^{\delta_3z_3}=-\frac{1}{2}\sum_{\ell\in\nn}e^{-rt\ell^2}e^{-2rtk\ell}\ell^{\delta_3z_3}.
  \end{align*}
  Hence, we are looking to compute
  \begin{align*}
    \lim_{z_3\to0}\lim_{z_1\to0}\sum_{k\in\zn}e^{-rtk^2}\abs{k}^{\delta_1z_1}\l(-\frac{1}{2}-\frac{1}{2}\sum_{\ell\in\nn}e^{-rt\ell^2}\ell^{\delta_3z_3}\l((2\ell+1)e^{2rtk\ell}+e^{-2rtk\ell}\r)\r).
  \end{align*}
  Using
  \begin{align*}
    \sum_{k\in\zn}e^{-rtk^2\pm2rtk\ell}=1+\sum_{k\in\nn}e^{-rtk^2+2rtk\ell}+\sum_{k\in\nn}e^{-rtk^2-2rtk\ell},
  \end{align*}
  we are looking for the limit $z_1,z_3\to0$ of
  \begin{align*}
    &-\sum_{k\in\nn}e^{-rtk^2}k^{\delta_1z_1}\\
    &-\frac{1}{2}\sum_{k,\ell\in\nn}e^{-rt(k-\ell)^2}(2\ell+1)k^{\delta_1z_1}\ell^{\delta_3z_3}-\frac{1}{2}\sum_{k,\ell\in\nn}e^{-rt(k+\ell)^2}(2\ell+1)k^{\delta_1z_1}\ell^{\delta_3z_3}\\
    &-\frac{1}{2}\sum_{k,\ell\in\nn}e^{-rt(k-\ell)^2}k^{\delta_1z_1}\ell^{\delta_3z_3}-\frac{1}{2}\sum_{k,\ell\in\nn}e^{-rt(k+\ell)^2}k^{\delta_1z_1}\ell^{\delta_3z_3}
  \end{align*}
  which, in parts, we already know in terms of $\Htr_{\Tp,\zeta}(t)=-\frac{1}{2}-\sum_{k\in\nn}(k+1)e^{-tk^2}$, the heat-trace on the Toeplitz algebra. Thus,
  \begin{align*}
    \zeta(\Gf_t)(0)=&-\sum_{k\in\nn}e^{-rtk^2}-\frac{1}{2}\sum_{k,\ell\in\nn}e^{-rt(k+\ell)^2}(2\ell+1)-\frac{1}{2}\Htr_{\Tp,\zeta}(rt)-\frac{1}{2}\sum_{k,\ell\in\nn}e^{-rt(k+\ell)^2}\\
    &-\frac{1}{2}\lim_{z_1,z_3\to0}\sum_{k,\ell\in\nn}e^{-rt(k-\ell)^2}(2\ell+1)k^{\delta_1z_1}\ell^{\delta_3z_3}\\
    =&-\sum_{k\in\nn}e^{-rtk^2}-\frac{1}{2}\sum_{k,\ell\in\nn}e^{-rt(k+\ell)^2}(2\ell+1)-\Htr_{\Tp,\zeta}(rt)-\frac{1}{2}\sum_{k,\ell\in\nn}e^{-rt(k+\ell)^2}\\
    &-\lim_{z_1,z_3\to0}\sum_{k,\ell\in\nn}e^{-rt(k-\ell)^2}k^{\delta_1z_1}\ell^{1+\delta_3z_3}    \\
    =&-\sum_{k\in\nn}e^{-rtk^2}-\frac{1}{2}\sum_{k,\ell\in\nn}e^{-rt(k+\ell)^2}(2\ell+1)-\Htr_{\Tp,\zeta}(rt)-\frac{1}{2}\sum_{k,\ell\in\nn}e^{-rt(k+\ell)^2}\\
    &-\zeta_R(-1)\sum_{k\in\nn}e^{-rtk^2}-\zeta_R(-1)+\zeta_R(0)+\l(\frac{1}{2}-\zeta_R(-1)\r)\sum_{k\in\nn}ke^{-rtk^2}\\
    &-\frac{1}{2}\sum_{k\in\nn}k^2e^{-rtk^2}\\
    =&\frac{1}{12}+\frac{1}{12}\sum_{k\in\nn}e^{-rtk^2}+\frac{19}{12}\sum_{k\in\nn}ke^{-rtk^2}-\frac{1}{2}\sum_{k\in\nn}k^2e^{-rtk^2}\\
    &-\sum_{k,\ell\in\nn}e^{-rt(k+\ell)^2}\ell-\sum_{k,\ell\in\nn}e^{-rt(k+\ell)^2}
  \end{align*}
  where we computed the final limit in the same way the limit in the proof of Theorem~\ref{thm:heat-trace-toeplitz}. Finally, the latter two series can be reduced to the former three; namely,
  \begin{align*}
    \sum_{k,\ell\in\nn}e^{-rt(k+\ell)^2}=&\sum_{\ell\in\nn}\sum_{m\in\nn_{>\ell}}e^{-rtm^2}=\sum_{m\in\nn_{\ge2}}\sum_{\ell=1}^{m-1}e^{-rtm^2}=\sum_{m\in\nn}me^{-rtm^2}-\sum_{m\in\nn}e^{-rtm^2}
  \end{align*}
  and
  \begin{align*}
    \sum_{k,\ell\in\nn}e^{-rt(k+\ell)^2}\ell=\sum_{m\in\nn_{\ge2}}\sum_{\ell=1}^{m-1}e^{-rtm^2}\ell=\frac{1}{2}\sum_{m\in\nn}m^2e^{-rtm^2}-\frac{1}{2}\sum_{m\in\nn}me^{-rtm^2}.
  \end{align*}
  Hence, we obtain
  \begin{align*}
    \zeta(\Gf_t)(0)=&\frac{1}{12}+\frac{13}{12}\sum_{k\in\nn}e^{-rtk^2}+\frac{13}{12}\sum_{k\in\nn}ke^{-rtk^2}-\sum_{k\in\nn}k^2e^{-rtk^2}.
  \end{align*}

  To obtain the asymptotics with respect to $t\searrow0$, we shall approximate each series using the integral comparison test again which yields
  \begin{align*}
    \lim_{t\searrow0}\sqrt{4\pi t}\sum_{k\in\nn}e^{-rtk^2}=\frac{\pi}{\sqrt{r}},
  \end{align*}
  \begin{align*}
    \lim_{t\searrow0}4\pi t\sum_{k\in\nn}ke^{-rtk^2}=\frac{2\pi}{r},
  \end{align*}
  and
  \begin{align*}
    \lim_{t\searrow0}(4\pi t)^{\frac{3}{2}}\sum_{k\in\nn}k^2e^{-rtk^2}=\frac{2\pi^2}{r^{\frac{3}{2}}}.
  \end{align*}
  In other words,
  \begin{align*}
    \lim_{t\searrow0}(4\pi t)^{\frac{3}{2}}\tr_\zeta(T(t))=-\frac{2\pi^2}{r^{\frac{3}{2}}}.
  \end{align*}

\end{proof}

\begin{remark*}
  Recall that we have not been computing ``heat-invariants'' in Theorem~\ref{thm:gaussian-trace-SUq(2)} since there is no Brownian motion on $SU_q(2)$. Thus, we cannot interpret $SU_q(2)$ as a ``quantum manifold'' of volume $-2\pi^2r^{-\frac{3}{2}}$. However, the driftless Gaussians that we still have at our disposal recovered the pole order $\frac32$ for $t\searrow0$ which is the expected result since $SU(2)$ is isomorphic to the (real) $3$-sphere. In other words, we can interpret $SU_q(2)$ as a three-dimensional ``quantum manifold'' which gives the correct limit at $q=1$. This stands in contrast to Connes' observation~\cite{connes-hochschild-dimension} that the Hochschild dimension of $SU_q(2)$ drops from $3$ ($q=1$) to $1$ ($q\ne1$). However, it is consistent with Hadfield's and Krähmer's results~\cite{hadfield-kraehmer-I,hadfield-kraehmer-II} that $SU_q(2)$ is a twisted $3$-dimensional Calabi-Yau algebra.
\end{remark*}

\section{Conclusion}\label{sec:conclusion}
We have considered driftless Gaussians, in particular Brownian motion, on a number of C*-bialgebras to define Laplace-type operators and heat-semigroups. We spectrally regularized their traces using operator $\zeta$-functions and computed quantities like criticality and heat-coefficients. We noticed that the notion of dimension obtained from the critical degree of homogeneity need not coincide with the dimension obtained from the heat-trace. In particular, having an abstract twist structure is seen in the dimension obtained from criticality but not in the heat-trace. Thus, the ``criticality dimension'' seems (somewhat unsurprisingly) to be related to the algebraic properties of the algebra whereas the ``heat-trace dimension'' (being induced by the dynamics of Brownian motion) seems to be related to geometric/analytic properties of the algebra. This is particularly obvious in the case of twisted classical structures where the ``heat-trace dimension'' coincides with the classical dimension which is not the case for the ``criticality dimension'' which also counts the number of abstract twists (as these are generators of the algebra as well). In the $SU_q(2)$ case, we observed the additional obstruction that there is no Brownian motion and we had to make do with the projectively unique generator of a driftless Gaussian as our version of a ``Laplacian'' (whose $SU(2)$ version does not have compact resolvent). Still, we were able to recover $3$-dimensionality using the ``Gauss-trace'' and criticality.

In terms of the ``heat-coefficients'', the leading order coefficient can hardly be interpreted as a volume since many of them are negative. On the other hand, we observed that the ``heat-coefficients'' can be used to differentiate between different algebras (e.g., the Toeplitz algebra and the discrete Heisenberg group algebra $\Hn_1$ have different ``heat-coefficients''). However, it is not possible to ``hear the shape of a quantum drum'' using these ``heat-coefficients'' alone as we have observed that the ``heat-coefficients'' of $\Hn_N$ and $A_\theta^N$ with complex twists coincide with the heat-coefficients of classical tori.

\begin{bibdiv}
  \begin{biblist}
  \bib{accardi-schurmann-waldenfels}{article}{
    AUTHOR = {ACCARDI, L.},
    AUTHOR ={SCH\"URMANN, M.},
    AUTHOR={VON WALDENFELS, W.},
     TITLE = {Quantum independent increment processes on superalgebras},
   JOURNAL = {Mathematische Zeitschrift},
    VOLUME = {198},
      YEAR = {1988},
    NUMBER = {4},
     PAGES = {451--477},
      ISSN = {0025-5874},
       DOI = {10.1007/BF01162868},
       URL = {http://dx.doi.org/10.1007/BF01162868},
     }
  
    \bib{atiyah-bott-patodi}{article}{
      author={ATIYAH, M.},
      author={BOTT, R.},
      author={PATODI, V. K.},
      title={On the heat equation and the index theorem},
      journal={Inventiones Mathematicae},
      volume={19 (4)},
      pages={279-330},
      date={1973}
    }
    \bib{aukhadiev-grigoryan-lipacheva}{article}{
      author={AUKHADIEV, M. A.},
      author={GRIGORYAN, S. A.},
      author={LIPACHEVA, E. V.},
      title={Infinite-Dimensional Compact Quantum Semigroup},
      journal={Lobachevskii Journal of Mathematics},
      volume={32},
      pages={304-316},
      date={2011}
    }
    \bib{azzali-levy-neira-paycha}{article}{
      author={AZZALI, S.},
      author={L{\'E}VY, C.},
      author={NEIRA-JIMEN{\'E}Z, C.},
      author={PAYCHA, S.},
      title={Traces of holomorphic families of operators on the noncommutative torus and on Hilbert modules},
      journal={Geometric Methods in Physics: XXXIII Workshop 2014},
      pages={3-38},
      date={2015}
    }
    \bib{belton-wills}{article}{
    AUTHOR = {BELTON, A. C. R.},
    AUTHOR={WILLS, S. J.},
     TITLE = {An algebraic construction of quantum flows with unbounded
              generators},
   JOURNAL = {Ann. Inst. Henri Poincar\'e Probab. Stat.},
    VOLUME = {51},
      YEAR = {2015},
    NUMBER = {1},
     PAGES = {349--375},
	}
    \bib{carey-gayral-rennie-sukochev}{article}{
      author={CAREY, A. L.},
      author={GAYRAL, V.},
      author={RENNIE, A.},
      author={SUKOCHEV, F. A.},
      title={Integration on locally compact noncommutative spaces},
      journal={Journal of Functional Analysis},
      volume={263},
      number={2},
      pages={383--414},
      date={2012}
    }
    \bib{carey-rennie-sadaev-sukochev}{article}{
      author={CAREY, A. L.},
      author={RENNIE, A.},
      author={SADAEV, A.},
      author={SUKOCHEV, F. A.},
      title={The Dixmier trace and asymptotics of zeta functions},
      journal={Journal of Functional Analysis},
      volume={249},
      number={2},
      pages={253--283},
      date={2007}
    }
    \bib{connes-hochschild-dimension}{article}{
      author={CONNES, A.},
      title={Cyclic cohomology, quantum group symmetries and the local index formula for $SU_q(2)$},
      journal={Journal of the Institute of Mathematics of Jussieu},
      volume={3},
      pages={17--68},
      date={2004}
    }
    \bib{connes-action-functional}{article}{
      author={CONNES, A.},
      title={The Action Functional in Non-Commutative Geometry},
      journal={Communications in Mathematical Physics},
      volume={117},
      pages={673--683},
      date={1988}
    }
    \bib{connes-fathizadeh}{article}{
      author={CONNES, A.},
      author={FATHIZADEH, F.},
      title={The term $a_4$ in the heat kernel expansion of noncommutative tori},
      journal={arXiv:1611.09815v1 [math.QA]},
      date={2016}
    }
    \bib{connes-moscovici}{article}{ 
      author={CONNES, A.},
      author={MOSCOVICI, H.},
      title={Modular curvature for noncommutative two-tori},
      journal={Journal of the American Mathematical Society},
      volume={27},
      pages={639-684},
      date={2014}
    }
    \bib{connes-tretkoff}{article}{
      author={CONNES, A.},
      author={TRETKOFF, P.},
      title={The Gauss–Bonnet theorem for the noncommutative two torus},
      journal={Noncommutative Geometry, Arithmetic, and Related Topics},
      pages={141-158},
      date={2011}
    }
    \bib{cipriani-franz-kula}{article}{
    AUTHOR = {CIPRIANI, F.},
    AUTHOR = {FRANZ, U.},
    AUTHOR = {KULA, A.},
     TITLE = {Symmetries of {L}\'evy processes on compact quantum groups,
              their {M}arkov semigroups and potential theory},
  JOURNAL = {Journal of Functional Analysis},
    VOLUME = {266},
      YEAR = {2014},
    NUMBER = {5},
     PAGES = {2789-2844},
      ISSN = {0022-1236},
       DOI = {10.1016/j.jfa.2013.11.026},
       URL = {http://dx.doi.org/10.1016/j.jfa.2013.11.026},
}
    \bib{craig}{article}{
      author={CRAIG, J. W.},
      title={A new, simple and exact result for calculating the probability of error for two-dimensional signal constellations},
      journal={Proc. 1991 IEEE Military Commun. Conf.},
      volume={2},
      pages={571-575},
      date={1991}
    }
    \bib{dabrowski-sitarz-asymmetric}{article}{ 
      author={DABROWSKI, L.},
      author={SITARZ, A.},
      title={An asymmetric noncommutative torus},
      journal={SIGMA},
      volume={11},
      pages={075},
      date={2015}
    }
    \bib{dabrowski-sitarz-curved}{article}{ 
      author={DABROWSKI, L.},
      author={SITARZ, A.},
      title={Curved noncommutative torus and Gauss–Bonnet},
      journal={Journal of Mathematical Physics},
      volume={54},
      pages={013518},
      date={2013}
    }
    \bib{fathi}{book}{
      author={FATHI, A.},
      title={On certain spectral invariants of Dirac operators on noncommutative tori and curvature of the determinant line bundle for the noncommutative two torus},
      publisher={Ph.D. thesis, University of Western Ontario},
      address={London, ON},
      date={2015}
    }
    \bib{fathi-ghorbanpour-khalkhali}{article}{
      author={FATHI, A.},
      author={GHORBANPOUR, A.},
      author={KHALKHALI, M.},
      title={Curvature of the determinant line bundle for the noncommutative two torus},
      journal={arXiv:1410.0475~[math.QA]},
      pages={1--18},
      date={2014}
    }
    \bib{fathi-khalkhali}{article}{
      author={FATHI, A.},
      author={KHALKHALI, M.},
      title={On certain spectral invariants of Dirac operators on noncommutative tori},
      journal={arXiv:1504.01174v1~[math.QA]},
      pages={1--30},
      date={2015}
    }
    \bib{fathizadeh}{article}{ 
      author={FATHIZADEH, F.}, 
      title={On the scalar curvature for the noncommutative four torus},
      journal={Journal of Mathematical Physics},
      volume={56},
      pages={062303},
      date={2015}
    }
    \bib{fathizadeh-khalkhali-scalar-4}{article}{ 
      author={FATHIZADEH, F.}, 
      author={KHALKHALI, M.},
      title={Scalar curvature for noncommutative four-tori},
      journal={Journal of Noncommutative Geometry},
      volume={9},
      pages={473-503},
      date={2015}
    }
    \bib{fathizadeh-khalkhali-scalar-2}{article}{ 
      author={FATHIZADEH, F.}, 
      author={KHALKHALI, M.},
      title={Scalar curvature for the noncommutative two torus},
      journal={Journal of Noncommutative Geometry},
      volume={7},
      pages={1145-1183},
      date={2013}
    }
    \bib{fathizadeh-khalkhali-gauss-bonnet}{article}{ 
      author={FATHIZADEH, F.}, 
      author={KHALKHALI, M.},
      title={The Gauss-Bonnet theorem for noncommutative two tori with a general conformal structure},
      journal={Journal of Noncommutative Geometry},
      volume={6},
      pages={457-480},
      date={2012}
    }
    \bib{franz}{book}{,
      AUTHOR = {FRANZ, U.},
      TITLE = {L\'evy processes on quantum groups and dual groups},
      BOOKTITLE = {Quantum independent increment processes. {II}},
      SERIES = {Lecture Notes in Mathematics},
      VOLUME = {1866},
      PAGES = {161-257},
      PUBLISHER = {Springer, Berlin},
      YEAR = {2006},
      DOI = {10.1007/11376637_3},
      URL = {http://dx.doi.org/10.1007/11376637_3},
    }
    \bib{gayral-iochum-vassilevich}{article}{
      author={GAYRAL, V.},
      author={IOCHUM, B.},
      author={VASSILEVICH, D. V.},
      title={Heat Kernel and Number Theory on NC-torus},
      journal={Communications in Mathematical Physics},
      volume={273},
      pages={415-443},
      date={2007}
    }
    \bib{guillemin}{article}{
      author={GUILLEMIN, V.},
      title={Gauged Lagrangian Distributions},
      journal={Advances in Mathematics},
      volume={102},
      pages={184-201},
      date={1993}
    }
    \bib{hadfield-kraehmer-I}{article}{
      author={HADFIELD, T.},
      author={KR{\"{A}}HMER, U.},
      title={Twisted homology of quantum $SL(2)$},
      journal={K-Theory},
      volume={34},
      pages={327-360},
      date={2005}
    }
    \bib{hadfield-kraehmer-II}{article}{
      author={HADFIELD, T.},
      author={KR{\"{A}}HMER, U.},
      title={Twisted homology of quantum $SL(2)$ - Part II},
      journal={Journal of K-Theory},
      volume={6},
      pages={69-98},
      date={2010}
    }
    \bib{hartung-phd}{book}{
      author={HARTUNG, T.},
      title={$\zeta$-functions of Fourier Integral Operators},
      publisher={Ph.D. thesis, King's College London},
      address={London},
      date={2015}
    }
    \bib{hartung-scott}{article}{
      author={HARTUNG, T.},
      author={SCOTT, S.},
      title={A generalized Kontsevich-Vishik trace for Fourier Integral Operators and the Laurent expansion of $\zeta$-functions},
      journal={arXiv:1510.07324v2~[math.AP]},
      date={2015}
    }
    \bib{hawking}{article}{
      author={HAWKING, S. W.},
      title={Zeta Function Regularization of Path Integrals in Curved Spacetime},
      journal={Communications in Mathematical Physics},
      volume={55},
      pages={133--148},
      date={1977}
    }
    \bib{iochum-masson}{article}{
      author={IOCHUM, B.},
      author={MASSON, T.},
      title={Heat asymptotics for nonminimal Laplace type operators and application to noncommutative tori},
      journal={arXiv:1707.09657 [math.DG]},
      date={2017}
    }
    \bib{kaad-senior}{article}{
      author={KAAD, J.},
      author={SENIOR, R.},
      title={A twisted spectral triple for quantum $SU(2)$},
      journal={Journal of Geometry and Physics},
      volume={62},
      number={4},
      pages={731--739},
      date={2012}
    }
    \bib{kontsevich-vishik}{article}{
      author={KONTSEVICH, M.},
      author={VISHIK, S.},
      title={Determinants of elliptic pseudo-differential operators},
      journal={Max Planck Preprint, arXiv:hep-th/9404046},
      date={1994}
    }
    \bib{kontsevich-vishik-geometry}{article}{
      author={KONTSEVICH, M.},
      author={VISHIK, S.},
      title={Geometry of determinants of elliptic operators},
      journal={Functional Analysis on the Eve of the XXI century, Vol. I, Progress in Mathematics},
      volume={131},
      pages={173-197},
      date={1994}
    }
    \bib{levy-neira-paycha}{article}{
      author={L{\'E}VY, C.},
      author={NEIRA-JIMEN{\'E}Z, C.},
      author={PAYCHA, S.},
      title={The canonical trace and the noncommutative residue on the noncommutative torus},
      journal={Transactions of the American Mathematical Society},
      volume={368},
      pages={1051-1095},
      date={2016}
    }
    
    \bib{lindsay-skalski}{article}{
    AUTHOR = {LINDSAY, J. M.},
    AUTHOR = {SKALSKI, A. G.},
     TITLE = {Quantum stochastic convolution cocycles. {I}},
   JOURNAL = {Ann. Inst. H. Poincar\'e Probab. Statist.},
    VOLUME = {41},
      YEAR = {2005},
    NUMBER = {3},
     PAGES = {581--604},
}
    
     \bib{lindsay-skalski1}{article}{,
    AUTHOR = {LINDSAY, J. M.},
    AUTHOR = {SKALSKI, A. G.},
     TITLE = {Quantum stochastic convolution cocycles. {II}},
   JOURNAL = {Communications in Mathematical Physics},
    VOLUME = {280},
      YEAR = {2008},
    NUMBER = {3},
     PAGES = {575-610},
      ISSN = {0010-3616},
       DOI = {10.1007/s00220-008-0465-x},
       URL = {http://dx.doi.org/10.1007/s00220-008-0465-x},
     }    		
    \bib{lindsay-skalski2}{article}{,
    AUTHOR = {LINDSAY, J. M.},
    AUTHOR = {SKALSKI, A. G.},
     TITLE = {Convolution semigroups of states},
   JOURNAL = {Mathematische Zeitschrift},
    VOLUME = {267},
      YEAR = {2011},
    NUMBER = {1-2},
     PAGES = {325-339},
      ISSN = {0025-5874},
       DOI = {10.1007/s00209-009-0621-9},
       URL = {http://dx.doi.org/10.1007/s00209-009-0621-9},
    }
		
    \bib{lindsay-skalski3}{article}{,
    AUTHOR = {LINDSAY, J. M.},
    AUTHOR = {SKALSKI, A. G.},
     TITLE = {Quantum stochastic convolution cocycles {III}},
   JOURNAL = {Mathematische Annalen},
    VOLUME = {352},
      YEAR = {2012},
    NUMBER = {4},
     PAGES = {779-804},
      ISSN = {0025-5831},
       DOI = {10.1007/s00208-011-0656-1},
       URL = {http://dx.doi.org/10.1007/s00208-011-0656-1},
    }
    \bib{liu}{article}{
      author={LIU, Y.},
      title={Modular curvature for toric noncommutative manifolds},
      journal={arXiv:1510.04668v2 [math.OA]},
      date={2015}
    }
    \bib{maniccia-schrohe-seiler}{article}{
      author={MANICCIA, L.},
      author={SCHROHE, E.},
      author={SEILER, J.},
      title={Uniqueness of the Kontsevich-Vishik trace},
      journal={Proceedings of the American Mathematical Society},
      volume={136 (2)},
      pages={747-752},
      date={2008}
    }
    \bib{matassa}{article}{
      author={MATASSA, M.},
      title={Non-Commutative Integration, Zeta Functions and the Haar State for $SU_q(2)$},
      journal={Mathematical Physics, Analysis and Geometry},
      volume={18},
      number={6},
      pages={1--23},
      date={2015}
    }
    \bib{murphy}{book}{
    AUTHOR = {MURPHY, G. J.},
     TITLE = {{$C^*$}-algebras and operator theory},
 PUBLISHER = {Academic Press, Inc., Boston, MA},
      YEAR = {1990},
     PAGES = {x+286},
}
    
    \bib{okikiolu}{article}{
      author={OKIKIOLU, K.},
      title={The multiplicative anomaly for determinants of elliptic operators},
      journal={Duke Mathematical Journal},
      volume={79},
      pages={722-749},
      date={1995}
    }
    \bib{paycha-scott}{article}{
      author={PAYCHA, S.},
      author={SCOTT, S. G.},
      title={A Laurent expansion for regularized integrals of holomorphic symbols},
      journal={Geometric and Functional Analysis},
      volume={17 (2)},
      pages={491-536},
      date={2007}
    }
    \bib{ray}{article}{
      author={RAY, D. B.},
      title={Reidemeister torsion and the Laplacian on lens spaces},
      journal={Advances in Mathematics},
      volume={4},
      pages={109-126},
      date={1970}
    }
    \bib{ray-singer}{article}{
      author={RAY, D. B.},
      author={SINGER, I. M.},
      title={$R$-torsion and the Laplacian on Riemannian manifolds},
      journal={Advances in Mathematics},
      volume={7},
      pages={145-210},
      date={1971}
    }
    \bib{sadeghi}{article}{
      author={SADEGHI, S.},
      title={On logarithmic Sobolev inequality and a scalar curvature formula for noncommutative tori},
      journal={Ph.D. thesis, Western University, Ontario},
      date={2016}
    }
    \bib{schurmann}{book}{
      AUTHOR = {SCH\"URMANN, M.},
      TITLE = {White noise on bialgebras},
      SERIES = {Lecture Notes in Mathematics},
      VOLUME = {1544},
      PUBLISHER = {Springer-Verlag, Berlin},
      YEAR = {1993},
      PAGES = {vi+146},
      ISBN = {3-540-56627-9},
      DOI = {10.1007/BFb0089237},
      URL = {http://dx.doi.org/10.1007/BFb0089237}
    }
    \bib{schurmann-skeide}{article}{
      author={SCH\"URMANN, M.},
      author={SKEIDE, M.},
      TITLE = {Infinitesimal generators on the quantum group {${\rm SU}_q(2)$}},
      JOURNAL = {Infin. Dimens. Anal. Quantum Probab. Relat. Top.},
      VOLUME = {1},
      YEAR = {1998},
      NUMBER = {4},
      PAGES = {573--598},
      ISSN = {0219-0257},
      URL = {http://dx.doi.org/10.1142/S0219025798000314},
    }
    \bib{scott}{article}{
      author={SCOTT, S.},
      title={The residue determinant},
      journal={Communications in Partial Differential Equations},
      volume={30},
      pages={483-507},
      number={4-6},
      date={2005}
    }
    \bib{seeley}{article}{
      author={SEELEY, R. T.},
      title={Complex Powers of an Elliptic Operator},
      journal={Proceedings of Symposia in Pure Mathematics, American Mathematical Society},
      volume={10},
      pages={288-307},
      date={1967}
    }
    \bib{sitarz}{article}{
      author={SITARZ, A.},
      title={Wodzicki residue and minimal operators on a noncommutative 4-dimensional torus},
      journal={Journal of Pseudo-Differential Operators and Applications},
      volume={5},
      pages={305-317},
      date={2014}
    }
    \bib{timmermann}{book}{
    AUTHOR = {TIMMERMANN, T.},
     TITLE = {An invitation to quantum groups and duality},
    SERIES = {EMS Textbooks in Mathematics},
      NOTE = {From Hopf algebras to multiplicative unitaries and beyond},
 PUBLISHER = {European Mathematical Society (EMS), Z\"urich},
      YEAR = {2008},
     PAGES = {xx+407},
      ISBN = {978-3-03719-043-2},
       DOI = {10.4171/043},
       URL = {http://dx.doi.org/10.4171/043}
}
    \bib{vassilevich-I}{article}{
      author={VASSILEVICH, D. V.},
      title={Heat kernel, effective action and anomalies in noncommutative theories},
      journal={Journal of High Energy Physics},
      volume={0508},
      pages={085},
      date={2005}
    }
    \bib{vassilevich-II}{article}{
      author={VASSILEVICH, D. V.},
      title={Non-commutative heat kernel},
      journal={Letters in Mathematical Physics},
      volume={67},
      pages={185-195},
      date={2004}
    }
    \bib{wodzicki}{book}{
      author={WODZICKI, M.},
      title={Noncommutative Residue. I. Fundamentals.},
      series={Lecture Notes in Mathematics},
      volume={1289},
      publisher={Springer-Verlag, Berlin},
      date={1987}
    }
  \end{biblist}
\end{bibdiv}

\end{document}